\theoremstyle{remark}
\numberwithin{equation}{section}
\def\1{\hbox{\rm \bf 1}}
\def\pathloc{ \operatorname{Paths}_{\operatorname{loc}} }
\def\tbpath{ \operatorname{tbPaths}_{\operatorname{loc}} }
\def\paths{ \operatorname{Paths} }
\def\dom{\text{dom}}
\def\th@plain{%
  \thm@notefont{}% same as heading font
  \itshape % body font
}
\def\th@definition{%
  \thm@notefont{}% same as heading font
  \normalfont % body font
}
\newcommand{\NN}{{\mathbb{N}}}  %natural numbers
\newcommand{\QQ}{{\mathbb{Q}}}  %rational numbers
\newcommand{\RR}{{\mathbb{R}}}  %real numbers or cartesian space
\renewcommand{\SS}{{\mathbb{S}}} %sphere
\newcommand{\ZZ}{{\mathbb{Z}}}  %integers
\newcommand{\Ad}{{\operatorname{Ad}}}  %adjoint operator
\newcommand{\colim}{{\operatorname{colim}}} %colimit
\newcommand{\Diff}{{\operatorname{Diff}}}  %diffeomorphisms
\newcommand{\DVB}{{\mathbf{DVB}}} %diffeological vector pseudo-bundles
\newcommand{\Hom}{{\operatorname{Hom}}} %Hom
\newcommand{\id}{{\operatorname{id}}}  %identity
\newcommand{\pr}{{\operatorname{pr}}} %projection
\newcommand{\supp}{{\operatorname{supp}}}  %support
\newcommand{\g}{{\mathfrak{g}}} %generic Lie algebra
\newcommand{\GL}{{\operatorname{GL}}}  %general linear
\newcommand{\SO}{{\operatorname{SO}}}  %special orthogonal
\newcommand{\CIN}{{C^\infty}}   %infinitely differentiable
\newcommand{\eps}{\varepsilon}  %var epsilon
\newcommand{\hook}{{\lrcorner\,}} %hook or interior multiplication
\newcommand{\tr}{{\operatorname{tr}}} %translation
\newcommand{\hypref}[2]{{\hyperref[#1]{#2~\ref{#1}}}}
\newcommand{\ifwork}[1]{\ifthenelse{\boolean{workmode}}{#1}{}}
\newcommand{\comment}[1]{}
\newcommand{\mute}[1]{}
\newcommand{\printname}[1]{}
\renewcommand{\comment}[1]{{\marginpar{*}\ \scriptsize{#1}\ }}
\renewcommand{\printname}[1]
    {\smash{\makebox[0pt]{\hspace{-1.0in}\raisebox{8pt}{\tiny #1}}}}
\newcommand{\labell}[1] {\label{#1} \printname{#1}}
\newcommand{\ifsection}[2]{\ifthenelse{\boolean{sections}}{#1}{#2}}
\theoremstyle{plain}
    \newtheorem{theorem}{Theorem}[section]
    \newtheorem{theorem}{Theorem}
\newtheorem{proposition}[theorem]{Proposition}
\newtheorem{corollary}[theorem]{Corollary}
\newtheorem{lemma}[theorem]{Lemma}
\theoremstyle{definition}
\newtheorem{definition}[theorem]{Definition}
\newtheorem{example}[theorem]{Example}
\newtheorem{remark}[theorem]{Remark}
\newtheorem{question}{Question}
\def\eoe{\unskip\ \hglue0mm\hfill$\diamond$\smallskip\goodbreak} 
\title{The Diffeology of Milnor's Classifying Space}
\author{Jean-Pierre Magnot}
\address{LAREMA, Universit\'e d’Angers, 2 Bd Lavoisier
	, 49045 Angers cedex 1, France and Lyc\'ee Jeanne d'Arc, 40 avenue de Grande Bretagne, 63000 Clermont-Ferrand, France}
\email{jean-pierr.magnot@ac-clermont.fr}
\author{Jordan Watts}
\address{Department of Mathematics, University of Colorado Boulder, Campus Box 395, Boulder, CO, USA 80309.}
\email{jordan.watts@colorado.edu}
\keywords{diffeology; classifying space; universal bundle; diffeological group; Lie group}
\thanks{2010 AMS \emph{Mathematics subject classification}. Primary: 53C05, 57R55; Secondary: 58B05, 58B10.}
\begin{document}

\begin{abstract}
We define a diffeology on the Milnor classifying space of a diffeological group $G$, constructed in a similar fashion to the topological version using an infinite join.  Besides obtaining the expected classification theorem for smooth principal bundles, we prove the existence of a diffeological connection on any principal bundle (with mild conditions on the bundles and groups), and apply the theory to some examples, including some infinite-dimensional groups, as well as irrational tori.  
\end{abstract}

\maketitle

\section{Introduction}\labell{s:intro}

Let $G$ be a topological group with a reasonable topology (Hausdorff, paracompact, and second-countable, say).  Milnor \cite{milnor2} constructed a universal topological bundle $EG\to BG$ with structure group $G$ satisfying:
\begin{itemize}
\item for any principal $G$-bundle $E\to X$ over a space (again, assume Hausdorff, paracompact, and second-countable) there is a continuous ``classifying map'' $F\colon X\to BG$ for which $E$ is $G$-equivariantly homeomorphic to the pullback bundle $F^*EG,$

\item any continuous map $F\colon X\to BG$ induces a bundle $F^*EG$ with $F$ a classifying map, and

\item any two principal $G$-bundles are isomorphic if and only if their classifying maps are homotopic.
\end{itemize}

To push this classification into the realm of Lie groups and smoothness, one would need smooth structures on the spaces $EG$ and $BG$.  Many approaches exist, which extend to infinite-dimensional groups (see, for example, \cite{mostow}, \cite[Theorem 44.24]{KM}).  The main point here is that $EG$ and $BG$ are not typically manifolds, and so a more general smooth structure is required.

In this paper, we take the approach of diffeology.  The language is quite friendly, allowing one to differentiate and apply other analytical tools with ease to infinite-dimensional groups such as diffeomorphism groups, including those of non-compact manifolds, as well as projective limits of groups, including some groups which appear naturally in the ILB setting of Omori \cite{Om}, and may not exhibit atlases.  Moreover, we can include in this paper interesting groups which are not typically considered as topological groups.  For example, irrational tori (see Example~\ref{x:irrational torus}) have trivial topologies (and hence have no atlas) and only constant smooth functions; however, they have rich diffeologies, which hence are ideal structures for studying the groups.  Irrational tori appear in important applications, such as pre-quantum bundles on a manifold associated to non-integral closed 2-forms (see Subsection~\ref{ss:irrational torus}).  Another benefit of using diffeology is that we can directly use the language to construct connection 1-forms on $EG$. Our main source for the preliminaries on diffeology is the book by Iglesias-Zemmour \cite{iglesias}.

Our main results include Theorem~\ref{t:BG}, which states that there is a natural bijection between isomorphism classes of so-called D-numerable principal $G$-bundles over a Hausdorff, second-countable, smoothly paracompact diffeological space $X$, and smooth homotopy classes of maps from $X$ to $BG$.  This holds for any diffeological group $G$.  As well, we prove Theorem~\ref{t:iz-connection}, which states that if $G$ is a regular diffeological Lie group, and $X$ is Hausdorff and smoothly paracompact, then any D-numerable principal $G$-bundle admits a connection; in particular, such a bundle admits horizontal lifts of smooth curves.  These theorems provide a method for constructing classifying spaces different, for example, to what Kriegl and Michor do in \cite[Theorem 44.24]{KM} with $G=\Diff(M)$ for $M$ a compact smooth manifold, where they show that the space of embeddings of $M$ into $\ell^2$ yields a classifying space for $G$.

Our framework is applied to a number of situations.  We show that $EG$ is contractible (Proposition~\ref{p:contractible}), which allows us to study the homotopy of $BG$ (Proposition~\ref{p:homotopy of BG}).  We also study smooth homotopies between groups, and how these are reflected in classifying spaces and principal bundles (Subsection~\ref{ss:homotopies}).  We transfer the theory to general diffeological fibre bundles via their associated principal $G$-bundles (Corollary~\ref{c:BG}) and discuss horizontal lifts in this context (Proposition~\ref{p:assoc lifts}).  We also transfer the theory to diffeological limits of groups, with an application to certain ILB principal bundles (Proposition~\ref{p:ilb}).  We show that a short exact sequence of diffeological groups induces a long exact sequence of diffeological homotopy groups of classifying spaces (Proposition~\ref{p:les}),
and apply this to a short exact sequence of pseudo-differential operators and Fourier integral operators (Example~\ref{x:pdo}).
Finally, we apply this theory to irrational torus bundles, which are of interest in geometric quantisation \cite{weinstein}, \cite{iglesias-bdles}, \cite[Articles 8.40-8.42]{iglesias} and the integration of certain Lie algebroids \cite{crainic}.

This paper is organised as follows.  
Section~\ref{s:diffeology} reviews necessary prerequisites on diffeological groups (including diffeological Lie groups and regular groups), internal tangent bundles, and diffeological fibre bundles.  In Section~\ref{s:classifying sp} we construct the diffeological version of the Milnor classifying space, $EG\to BG$, and prove Theorem~\ref{t:BG}.  In Section~\ref{s:connection}, we introduce the theory of connections from the diffeological point-of-view, and prove Theorem~\ref{t:iz-connection}.  Finally, in Section~\ref{s:applications}, we have our applications.

A few open questions are inspired.  The conditions on the D-topology (in particular, Hausdorff and second-countable conditions, and sometimes smooth paracompactness as well) seem out-of-place in the general theory of diffeology.
Even though these conditions are satisfied in our examples, in some sense, topological conditions and arguments should be \emph{replaced} with diffeological conditions and arguments.  This leads to the first question.
\begin{question}\labell{q:replace top}
Under what conditions is the D-topology of a diffeological space Hausdorff, second-countable, and smoothly paracompact?  Can one weaken these conditions?
\end{question}
An answer to this would allow us to rephrase Theorem~\ref{t:BG} in a manner more natural to diffeology.
A partial answer, for example, is to require that the smooth real-valued functions separate points: in this case, the weakest topology induced by the functions is Hausdorff, and this is a sub-topology of the D-topology.  It follows that the D-topology is Hausdorff.

Another question is an obvious one:
\begin{question}\labell{q:universal bdle}
Given a diffeological group $G$, if $E\to X$ is a principal $G$-bundle satisfying necessary mild conditions, and the total space $E$ is diffeologically contractible, is $E\to X$ \textbf{universal} in the sense that any principal $G$-bundle over a sufficiently nice diffeological space $Y$ has a unique (up to smooth homotopy) classifying map to $X$? (The answer is affirmative in the topological category.)
\end{question}

The theme of universality continues:
\begin{question}\labell{q:universal conn}
Let $G$ be a regular diffeological Lie group (Definition~\ref{d:diffeol lie gp}), and let $E\to X$ be a principal $G$-bundle in which the D-topology on $X$ satisfies mild conditions.  Is every diffeological connection, or even every connection 1-form, the pullback of the diffeological connection (or connection 1-form) constructed on $EG\to BG$ in Theorem~\ref{t:universal connection}?
\end{question}

Some of these questions have been addressed in \cite{CW2}.  As mentioned above, Kriegl and Michor give a classifying space for the group $\Diff(M)$ where $M$ is a compact smooth manifold, and also show that this classifying space has a universal connection.  However, the proof uses the fact that every such $M$ has an isometric embedding into some Euclidean space, and we no longer have this advantage for general diffeological groups.

\subsection*{Acknowledgements}
The authors would like to thank Patrick Iglesias-Zemmour and the other organisers of the ``Workshop on Diffeology, etc.'', held in Aix-en-Provence in June 2014, where the discussions that lead to this paper began.  We would also like to thank Daniel Christensen and Enxin Wu, as well as the anonymous referee, for excellent comments and suggestions.

%%%%%%%%%%%%%%%%%%%%%%%%%%%%%%%%%%%%%%%%%%%%%%%%%%%%%%%%%%%%%%%%%%%%%%%%%%
\section{Preliminaries}\labell{s:diffeology}
%%%%%%%%%%%%%%%%%%%%%%%%%%%%%%%%%%%%%%%%%%%%%%%%%%%%%%%%%%%%%%%%%%%%%%%%%%

This section provides background on diffeological groups and fibre bundles.  For a review of more basic properties of diffeological spaces, we refer to the book of Iglesias-Zemmour \cite{iglesias}.  In particular, products, sub-objects, quotients, and underlying D-topologies of diffeological spaces, as well as homotopies of maps between them, are used throughout this paper. 

%% %% %%
\subsection{Diffeological Groups and Internal Tangent Bundles}\labell{ss:groups}
%% %% %% 

In this subsection we review diffeological groups and their actions.  We then introduce the internal tangent bundle of a diffeological space with the goal of obtaining a Lie algebra for certain diffeological groups admitting an exponential map.  For more details on the basics of diffeological groups, see \cite[Chapter 7]{iglesias}.  For more on internal tangent bundles, see \cite{CW}.  For more details on diffeological Lie groups, see \cite{leslie} and \cite{KM} (although the latter reference deals with infinite-dimensional groups, not diffeological ones).

\begin{definition}[Diffeological Groups and their Actions]\labell{d:group}
A \textbf{diffeological group} is a group $G$ equipped with a diffeology such that the multiplication map $m\colon G\times G\to G$ and the inversion map $\operatorname{inv}\colon G\to G$ are smooth.  A \textbf{diffeological group action} of $G$ on a diffeological space $X$ is a group action in which the map $G\times X\to X$ sending $(g,x)$ to $g\cdot x$ is smooth.  Fixing $g\in G$, denote by $L_g$ left multiplication by $g$, and by $R_g$ right multiplication by $g$.  Denote by $e$ the identity element of $G$.
\end{definition}

In order to begin associating a Lie algebra to certain diffeological groups, we need to establish the theory of tangent spaces and bundles to a diffeological space.

\begin{definition}[Internal Tangent Space]\labell{d:int tang sp}
Let $(X,\mathcal{D})$ be a diffeological space, and fix $x\in X$.  Let $\mathbf{Plots_0}(\mathcal{D},x)$ be the category whose objects are plots $(p\colon U\to X)\in\mathcal{D}$ for which $U$ is connected, $0\in U$, and $p(0)=x$; and whose arrows from $p\colon U\to X$ to $q\colon V\to X$ are commutative triangles 
$$\xymatrix{
U \ar[rr]^{f} \ar[dr]_{p} & & V \ar[dl]^{q} \\
 & X & \\
}$$
where $f\colon U\to V$ is smooth and $f(0)=0$.  Let $F$ be the forgetful functor from $\mathbf{Plots}_0(\mathcal{D},x)$ to $\mathbf{Open}_0$, the category whose objects are connected open subsets of Euclidean spaces containing $0$ and whose arrows are smooth maps between these that fix $0$.  $F$ sends a plot to its domain, and a commutative triangle as above to $f\colon U\to V$.  Finally, let $\mathbf{Vect}$ be the category of vector spaces with linear maps between them, and let $T_0\colon \mathbf{Open}_0\to\mathbf{Vect}$ be the functor sending $U$ to $T_0U$ and $f\colon U\to V$ to $f_*|_0\colon T_0U\to T_0V$.  Then define the \textbf{internal tangent space} $T_xX$ to be the colimit of the functor $T_0\circ F$.  Denote by $TX$ the set $\bigsqcup_{x\in X}T_xX$.
\end{definition}

In order to equip $TX$ with a suitable diffeology, we need a few more definitions.

\begin{definition}[Induced Tangent Maps]\labell{d:ind tang maps}
Let $(X,\mathcal{D})$ be a diffeological space.  For a plot $(p\colon U\to X)\in\mathbf{Plots}_0(\mathcal{D},x)$, denote by $p_*$ the map sending vectors in $T_0U$ to $T_{p(0)}X$ given by the definition of a colimit.  Extend this to more general plots $p\colon U\to X$ in $\mathcal{D}$ as follows.  Define $p_*\colon TU\to TX$ to be the map that sends $v\in T_uU$ to the element in $T_{p(u)}X$ given by $(p\circ\tr_u)_*((\tr_{-u})_*v)$, where $\tr_u$ is the translation in Euclidean space sending $U-u$ to $U$.  If $c\colon\RR\to X$ is a smooth curve, then by $\frac{dc}{dt}$ we mean $c_*\left(\frac{d}{dt}\right)$ where $\frac{d}{dt}$ is the constant section $\RR\to T\RR\colon t\mapsto(t,1)$.  We may also denote $p_*$ by $Tp$, emphasising the functoriality of $T$ (see Remark~\ref{r:tangent bdle}).
\end{definition}

\begin{definition}[Diffeological Vector Spaces]\labell{d:diffeol vect sp}
A \textbf{diffeological vector space} is a vector space $V$ over $\RR$ equipped with a diffeology such that addition $+\colon V\times V\to V$ and scalar multiplication $\cdot\colon\RR\times V\to V$ are smooth.  %$V$ is a \textbf{fine} diffeological vector space if its diffeology is the smallest one on $V$ such that it is diffeological vector space.
\end{definition}

We use the terminology of \cite{pervova}, but continue to follow \cite{CW}.

\begin{definition}[Diffeological Pseudo-Bundles]\labell{d:pseudobundle}
A \textbf{(diffeological) vector pseudo-bundle} $\pi\colon E\to X$ is a pair of diffeological spaces $E$ and $X$ with a smooth surjection $\pi\colon E\to X$ between them such that for each $x\in X$ the fibre $\pi^{-1}(x)$ is a diffeological vector space.  Moreover, fibrewise addition $+\colon E\times_X E\to E$, scalar multiplication $\cdot\colon\RR\times E\to E$, and the zero section $X\hookrightarrow E$ are required to be smooth.
\end{definition}

\begin{remark}\labell{r:pseudobundle}
Let $\pi\colon E\to X$ be a smooth map between diffeological spaces such that for each $x$, the fibre $\pi^{-1}(x)$ is a diffeological vector space.  Then there is a smallest diffeology on $E$ that contains the original diffeology on $E$ and so that $\pi$ is a vector pseudo-bundle; see \cite[Proposition 4.6]{CW}.
\end{remark}

\begin{definition}[Internal Tangent Bundle]\labell{d:tangent bdle}
Define the \textbf{internal tangent bundle} of a diffeological space $X$ to be the set $TX:=\bigsqcup_{x\in X}T_xX$ equipped with the smallest diffeology $T\mathcal{D}$ such that
\begin{enumerate}
\item $TX$ is a vector pseudo-bundle,
\item for each plot $p\colon U\to X$, the induced map $Tp\colon TU\to TX$ is smooth.
\end{enumerate}
\end{definition}

\begin{remark}\labell{r:tangent bdle}
$T$ is a functor, sending diffeological spaces $(X,\mathcal{D})$ to $(TX,T\mathcal{D})$.  Consequently, we get the chain rule: given smooth maps $f\colon X\to Y$ and $g\colon Y\to Z$, we have that $T(g\circ f)=Tg\circ Tf$.   Moreover, $T$ respects products: let $X$ and $Y$ be diffeological spaces.  Then $T(X\times Y)\cong TX\times TY$ \cite[Proposition 4.13]{CW}.
\end{remark}

We now arrive at an important generalisation of what is very well known in standard Lie group theory.

\begin{theorem}[Tangent Bundle of a Diffeological Group]\labell{t:group}
Let $G$ be a diffeological group with identity element $e$.  Then, $TG$ is isomorphic as a vector pseudo-bundle to $G\times T_eG$.
\end{theorem}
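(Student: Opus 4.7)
The natural candidate for the trivialisation is left translation: define
\[
\Phi\colon G\times T_eG\longrightarrow TG,\qquad \Phi(g,v) := (L_g)_*v.
\]
The plan is to show $\Phi$ is a smooth bijection whose inverse is also smooth, and that it is fibrewise linear, so it is an isomorphism of vector pseudo-bundles over $G$. Throughout, I will use Remark~\ref{r:tangent bdle}: the functor $T$ respects products, so $T(G\times G)\cong TG\times TG$, and $Tm\colon TG\times TG\to TG$ is smooth because $m$ is. The key identity I want is
\[
(L_g)_*v \;=\; Tm(0_g,\,v),
\]
where $0_g\in T_gG$ is the zero vector at $g$, and $v\in T_eG$ is viewed inside $TG$. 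This holds because for any plot $c\colon U\to G$ with $c(0)=e$ representing $v$, the pair $(\text{const}_g,c)$ represents $(0_g,v)$, and $m\circ(\text{const}_g,c)=L_g\circ c$ represents $(L_g)_*v$.

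For smoothness of $\Phi$, it factors as
\[
G\times T_eG \;\longrightarrow\; TG\times TG \;\xrightarrow{\;Tm\;}\; TG,\qquad (g,v)\mapsto (0_g,v).
\]
The first arrow is smooth because the zero section $G\to TG$, $g\mapsto 0_g$, is smooth by the pseudo-bundle axioms of Definition~\ref{d:pseudobundle}, and because the inclusion $T_eG\hookrightarrow TG$ is smooth (it is part of the defining colimit, and hence a plot for $T\mathcal{D}$). The second arrow is smooth by Remark~\ref{r:tangent bdle}.

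To build the inverse, define
\[
\Psi\colon TG\longrightarrow G\times T_eG,\qquad \Psi(w) := \bigl(\pi(w),\;Tm(0_{\operatorname{inv}(\pi(w))},\,w)\bigr),
\]
where $\pi\colon TG\to G$ is the pseudo-bundle projection. By the same identity applied to $L_{g^{-1}}$, the second component equals $(L_{\pi(w)^{-1}})_*w\in T_eG$. Smoothness of $\Psi$ follows from smoothness of $\pi$, of $\operatorname{inv}$ (hence of $T\operatorname{inv}$), of the zero section, and of $Tm$, assembled via the product diffeology. That $\Phi$ and $\Psi$ are mutually inverse is the functorial chain rule: $(L_{g^{-1}})_*\circ(L_g)_* = (L_{g^{-1}}\circ L_g)_* = (L_e)_* = \id_{T_eG}$, and symmetrically on the other side.

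Finally, on each fibre $\{g\}\times T_eG\to T_gG$, $\Phi$ is the map $(L_g)_*$, which is linear because it arises from the colimit functor $T_0\circ F$ of Definition~\ref{d:int tang sp}; hence $\Phi$ respects fibrewise addition, scalar multiplication, and the zero section, making it an isomorphism of vector pseudo-bundles. The main obstacle is nothing deep but rather diffeological bookkeeping: confirming that the maps $g\mapsto 0_g$ and $T_eG\hookrightarrow TG$ really are smooth with respect to the specific diffeology $T\mathcal{D}$ of Definition~\ref{d:tangent bdle}, and that the product decomposition $T(G\times G)\cong TG\times TG$ lets one pull $Tm$ back through a map whose first component lives in $G$ rather than $TG$. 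Once these smoothness checks are in place, everything else is formal.
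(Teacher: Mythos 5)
Your argument is correct and is essentially the construction the paper relies on: the paper's own ``proof'' is just a citation to Christensen--Wu, naming the inverse of your map, $v\mapsto(g,(L_{g^{-1}})_*v)$, and your route---left translation as the trivialisation, with smoothness routed through $Tm$ and $T(G\times G)\cong TG\times TG$---is that same approach with the details filled in. The one bookkeeping wrinkle is that an internal tangent vector need not be represented by a single plot (it is in general only a finite sum of pushforwards from plots), so your key identity $(L_g)_*v=Tm(0_g,v)$ should be checked on such generators and extended by linearity of both sides, or obtained directly from functoriality applied to $L_g=m\circ(\operatorname{const}_g,\id)$.
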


\begin{proof}
This is \cite[Theorem 4.15]{CW}.  The diffeomorphism is given by sending $v\in T_gG$ to $(g,(L_{g^{-1}})_*v)$.
\end{proof}

Diffeological groups admit adjoint actions of $G$ on $T_{e}G$.

\begin{definition}[Adjoint Action]\labell{d:adjoint action}
Let $G$ be a diffeological group, and let $C$ be the (smooth) \textbf{conjugation action}: $C\colon G\times G\to G\colon (g,h)\mapsto ghg^{-1}$.  Define the \textbf{adjoint action} of $G$ on $T_{e}G$ to be the (smooth) action $g\cdot\xi:=TC_g(\xi)$ for all $\xi\in T_eG$.
\end{definition}

This does not immediately give us a well-defined infinitesimal adjoint action (\emph{i.e.} Lie bracket) on $T_{e}G$.  This issue is resolved by Leslie in \cite{leslie} by considering the following type of diffeological group.

\begin{definition}[Diffeological Lie Group]\labell{d:lie gp}
A diffeological group $G$ is a \textbf{diffeological Lie group} if
\begin{enumerate}
\item for every $\xi\in T_eG$ there exists a smooth real-valued linear functional $\ell\colon T_eG\to\RR$ satisfying $\ell(\xi)\neq 0$,
\item the plots of $T_{e}G$ smoothly factor through a map $\varphi\colon V\to T_eG$, where $V$ is an open subset of a complete Hausdorff locally convex topological vector space.
\end{enumerate}
\end{definition}

\begin{remark}[$T_{e}G$ is a Lie Algebra]\labell{r:lie alg}
Let $G$ be a diffeological Lie group.  Then $T_{e}G$ is a Lie algebra under the infinitesimal adjoint action (\cite[Theorem 1.14]{leslie}). In this case, we denote $T_{e}G$ by $\g$.
\end{remark}

Finally, in order to associate curves in $G$ with curves in $\g$, we need to introduce the notion of a \emph{regular} Lie group, following \cite{leslie}, \cite[Section 38]{KM}, and \cite{Om}.  This correspondence is important when obtaining horizontal lifts of fibre bundles from connection 1-forms later in the paper.

\begin{definition}\labell{d:diffeol lie gp}
A diffeological Lie group $G$ is \textbf{regular} if there is a smooth map 
$$\exp\colon \CIN([0,1],\g)\to\CIN([0,1],G)$$
sending a smooth curve $\xi(t)$ to a smooth curve $g(t)$ such that $g(t)$ is the unique solution of the differential equation
$$\begin{cases}
g(0)= e,\\
(R_{g(t)^{-1}})_*\frac{dg(t)}{dt}= \xi(t).
\end{cases}$$
\end{definition}

\begin{remark}\labell{r:[0,1]}
Unless specified otherwise, we take the subset diffeology on $[0,1]\subseteq\RR$ throughout the paper.
\end{remark}

\begin{remark}\labell{r:diffeol lie gp}
Not all diffeological groups are regular.  For example, the diffeological group $\Diff_+(0,1)$ see \cite{Ma2011} or \cite[Subsection 2.5.2]{Ma2015}.
\end{remark}

\begin{remark}\labell{r:reg diffeol lie gp}
If $G$ is a regular diffeological Lie group, then any $v\in\g$ is a germ of a smooth path $c(t)=\exp(tv)$.  This fact is not guaranteed when $G$ is not regular, as in this case $v\in T_{e}G$ is only a sum of germs, and not necessarily a germ on its own.  This explains the different notations and definitions found in \cite{Ma2013} for Fr\"olicher Lie groups.  However, when $G$ is regular and $\g$ complete, the different definitions coincide.
\end{remark}

%% %% %%
\subsection{Diffeological Fibre Bundles}\labell{ss:bundles}
%% %% %%

We now review diffeological fibre bundles and principal $G$-bundles; for more details, see \cite{iglesias}.  Instead of using the original definition of a diffeological fibre bundle given in \cite{IgPhD}, we take the equivalent definition below (see \cite[Article 8.9]{iglesias}).  Similarly, we take an equivalent definition for a principal $G$-bundle (see \cite[Article 8.13]{iglesias}).

\begin{definition}[Diffeological Fibre Bundles]\labell{d:fibre bundles}
Let $\pi\colon E\to X$ be a smooth surjective map of diffeological spaces.
\begin{enumerate}
\item Define the \textbf{pullback} of $\pi\colon E\to X$ by a smooth map $f\colon Y\to X$ of diffeological spaces to be the set $$f^*E\colon =\{(y,e)\in Y\times E~|~f(y)=\pi(e)\}$$ equipped with the subset diffeology induced by the product.  It comes with the two smooth maps $\tilde{f}\colon f^*E\to E$ and $\pi_f\colon f^*E\to Y$ induced by the projection maps, the latter of which is also surjective.

\item $\pi\colon E\to X$ is \textbf{trivial} with \textbf{fibre} $F$ if there is a diffeological space $F$ and a diffeomorphism $\varphi\colon E\to X\times F$ making the following diagram commute.

    $$\xymatrix{
    E \ar[rr]^{\varphi} \ar[dr]_{\pi} & & X\times F \ar[dl]^{\pr_1} \\
     & X & \\
    }$$

\item $\pi\colon E\to X$ is \textbf{locally trivial} with fibre $F$ if there exists an open cover $\{U_\alpha\}_{\alpha}$ in the D-topology on $X$ such that for each $\alpha$, the pullback of $\pi\colon E\to X$ to $U_\alpha$ via the inclusion map is trivial with fibre $F$.  The collection $\{(U_\alpha,\varphi_\alpha)\}_\alpha$, where the diffeomorphism $\varphi_\alpha\colon E|_{U_\alpha}\to U_\alpha\times F$ is as above, make up a \textbf{local trivialisation} of $\pi$.

\item We say that $\pi\colon E\to X$ is a \textbf{diffeological fibration} or \textbf{diffeological fibre bundle} if for every plot $p\colon U\to X$, the pullback bundle $p^*E\to U$ is locally trivial.

\item Let $G$ be a diffeological group.  We say that a diffeological fibre bundle $\pi\colon E\to X$ is a \textbf{principal $G$-bundle} if there is a smooth action of $G$ on $E$ for which every plot $p\colon U\to X$ induces a pullback bundle $p^*E\to U$ that is \textbf{locally equivariantly trivial}.  That is, for any $u\in U$ there exists an open neighbourhood $V$ of $u$, a plot $q\colon V\to E$ satisfying $\pi\circ q=p|_V$, and an equivariant diffeomorphism $\psi\colon V\times G\to p|_V^*E$ sending $(v,g)$ to $(v,g\cdot q(v))$.  Furthermore, a principal $G$-bundle itself is \textbf{locally equivariantly trivial} if it admits a local trivialisation $\{U_\alpha,\varphi_\alpha\}_\alpha$ in which each $\varphi_\alpha$ is $G$-equivariant.

\item We say that a diffeological fibre bundle is \textbf{weakly D-numerable} if there exists a local trivialisation $\{U_i\}_{i\in\NN}$ of $X$ and a pointwise-finite smooth partition of unity $\{\zeta_i\}_{i\in\NN}$ of $X$ such that $\zeta_i^{-1}((0,1])\subseteq U_i$ for each $i$.  We say that a weakly D-numerable diffeological fibre bundle is \textbf{D-numerable} if the local trivialisation can be chosen to be locally finite and the smooth partition of unity subordinate to $\{U_i\}$; that is, $\supp(\zeta_i)\subseteq U_i$ for each $i\in\NN$.
\end{enumerate}
\end{definition}

The point of having weak D-numerability versus D-numerability is that \emph{a priori}, as we will see below, the Milnor construction as a bundle is not necessarily D-numerable, only weakly D-numerable.

\begin{remark}\labell{r:pullback}
Diffeological fibre bundles pull back to diffeological fibre bundles.  Moreover, trivial bundles pull back to trivial bundles. Consequently, (weakly) D-numerable bundles pull back to (weakly) D-numerable bundles.  Similar statements hold for principal $G$-bundles.
\end{remark}

\begin{remark}\labell{r:triviality}
Not every diffeological fibre bundle is locally trivial.  The 2-torus modulo the irrational Kronecker flow is such an example \cite[Article 8.38]{iglesias}.
\end{remark}

\begin{remark}\labell{r:d-numerable}
A weakly D-numerable principal diffeological fibre bundle with Hausdorff, smoothly paracompact base is D-numerable.  A locally trivial diffeological fibre bundle over a Hausdorff, second-countable, smoothly paracompact base is also D-numerable.  We include the definition of smooth paracompactness below for completeness.
\end{remark}

\begin{definition}[Smooth Paracompactness]\labell{d:smooth paracompactness}
A diffeological space $X$ is \textbf{smoothly paracompact} if the D-topology of $X$ is paracompact (any open cover admits a locally finite open refinement), and any open cover of $X$ admits a \emph{smooth} partition of unity subordinate to it.
\end{definition}

%%%%%%%%%%%%%%%%%%%%%%%%%%%%%%%%%%%%%%%%%%%%%%%%%%%%%%%%%%%%%%%%%%%%%%%%%%
\section{Milnor's Classifying Space}\labell{s:classifying sp}
%%%%%%%%%%%%%%%%%%%%%%%%%%%%%%%%%%%%%%%%%%%%%%%%%%%%%%%%%%%%%%%%%%%%%%%%%%

We begin this section with the construction of a classifying space of a diffeological group $G$, completely in the diffeological category (Proposition~\ref{p:EG principal}).  Under certain topological constraints on a diffeological space $X$, we get the desired natural bijection between homotopy classes of diffeologically smooth maps $X\to BG$ and D-numerable principal $G$-bundles over $X$ (see Theorem~\ref{t:BG}).

\subsection{The Milnor Construction}\labell{ss:construction}

We construct Milnor's classifying space in the diffeological category for a diffeological group $G$.  This results in a principal $G$-bundle $\pi\colon EG\to BG$ that is at least weakly D-numerable.

\begin{definition}[Join Operation]\labell{d:join}
Let $\{X_i\}_{i\in\NN}$ be a family of diffeological spaces.  Define the \textbf{join} $\bigstar_{i\in\NN}X_i$ of this family as follows.  Take the subset $S$ of the product $\left(\Pi_{i\in \NN}[0,1]\right)\times\left(\Pi_{i\in \NN}X_i\right)$ consisting of elements $(t_i,x_i)_{i\in \NN}$ in which only finitely many of the $t_i$ are non-zero, and $\sum_{i\in \NN}t_i=1$.  Equip $S$ with the subset diffeology induced by the product diffeology.  Let $\sim$ be the equivalence relation on $S$ given by: $(t_i,x_i)_{i\in \NN}\sim(t'_i,x'_i)_{i\in \NN}$ if
\begin{enumerate}
\item $t_i=t'_i$ for each $i\in \NN$, and
\item if $t_i=t'_i\neq 0$, then $x_i=x_i'$.
\end{enumerate}
Then the join is the quotient $S/\!\sim$ equipped with the quotient diffeology.  We denote elements of $\bigstar_{i\in \NN}X_i$ by $(t_ix_i)$.
\end{definition}

\begin{definition}[Classifying Space]\labell{d:EG}
Let $G$ be a diffeological group.  Define $EG = \bigstar_{i \in \NN}G$.  There is a natural smooth action of $G$ on $EG$ given by $h\cdot(t_ig_i)=(t_ig_ih^{-1})$ induced by the diagonal action of $G$ on $G^{\NN}$ and the trivial action on $[0,1]^{\NN}$.  Denote the quotient $EG/G$ by $BG$ and elements of $BG$ by $[t_ig_i]$.  This is the \textbf{(diffeological) Milnor classifying space} of $G$.  Since we will make use of it, denote by $S_G$ the subset of $[0,1]^\NN\times G^\NN$ corresponding to the set $S$ in Definition~\ref{d:join}.
\end{definition}

\begin{proposition}\labell{p:EG action smooth}
The action of $G$ on $EG$ is smooth.
\end{proposition}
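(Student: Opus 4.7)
The strategy is to factor the action through the smooth action on the ``pre-quotient'' $S_G$, then transfer smoothness through the quotient map $q\colon S_G \to EG$.

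First, I would show that the lifted action $\tilde{a}\colon G \times S_G \to S_G$ given by $(h,(t_i,g_i)_{i\in\NN}) \mapsto (t_i,g_ih^{-1})_{i\in\NN}$ is smooth. Since $S_G$ carries the subset diffeology inherited from $[0,1]^\NN\times G^\NN$, it suffices by the universal properties of products and subspaces to check that each coordinate projection is smooth. On the $[0,1]$-factors the action is the identity, and on the $i$-th $G$-factor it is $(h,g_i)\mapsto g_ih^{-1} = m(g_i,\operatorname{inv}(h))$, which is smooth by Definition~\ref{d:group}. One also needs to observe that $\tilde{a}$ takes values in $S_G$: the summability condition $\sum t_i = 1$ and the finiteness of the support of $(t_i)$ are preserved, since only the $G$-coordinates are altered.

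Next, I would verify that $\tilde{a}$ descends to the claimed map on $EG$. If $(t_i,g_i)\sim (t'_i,g'_i)$ in $S_G$, then $t_i=t'_i$ for all $i$, and $g_i = g'_i$ whenever $t_i\neq 0$. Applying $\tilde{a}(h,\cdot)$ preserves both properties (the $t$-coordinates are unchanged, and $g_ih^{-1}=g'_ih^{-1}$ whenever $t_i\neq 0$), so the composition $q\circ\tilde{a}\colon G\times S_G \to EG$ factors through $G\times EG$ to give the action map $a$.

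The final step is transferring smoothness to $a\colon G\times EG\to EG$. Let $r=(r_1,r_2)\colon U\to G\times EG$ be a plot, and fix $u_0\in U$. Because $EG$ carries the quotient diffeology from $S_G$, after possibly shrinking $U$ around $u_0$ there exists a smooth lift $\tilde{r}_2\colon U\to S_G$ with $q\circ\tilde{r}_2 = r_2$. Then on this neighbourhood,
\[
a\circ r \;=\; q\circ\tilde{a}\circ(r_1,\tilde{r}_2),
\]
which is a composition of smooth maps and hence a plot of $EG$. Since plots are a local notion, this establishes smoothness of $a$.

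The only genuinely subtle point is the local lifting of plots of the quotient $EG$ to plots of $S_G$, but this is exactly the defining property of the quotient diffeology, so no real obstacle arises; the argument is essentially a formal unwinding of Definitions~\ref{d:join} and~\ref{d:EG} together with the smoothness of $m$ and $\operatorname{inv}$.
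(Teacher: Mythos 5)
Your proof is correct and follows essentially the same route as the paper: lift the action to the smooth map $G\times S_G\to S_G$, then use the local lifting property of the quotient diffeology to conclude that the induced map on $G\times EG$ sends plots to plots. Your version simply spells out in more detail the coordinatewise smoothness of the lifted action and the check that it descends through the equivalence relation, both of which the paper leaves implicit.
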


\begin{proof}
We want to show that the map $a_E\colon G\times EG\to EG$ sending $(h,(t_ig_i))$ to $(t_ig_ih^{-1})$ is smooth.  Let $p\colon U\to G\times EG$ be a plot.  It is enough to show that $a_E\circ p$ locally lifts to a plot of $S_G$ via the quotient map $\pi\colon S_G\to EG$.  Let $\pr_i$ ($i=1,2$) be the natural projection maps on $G\times EG$.  Then there exist an open cover $\{U_\alpha\}$ of $U$ and for each $\alpha$ a plot $q_\alpha\colon U_\alpha\to S_G$ such that $$\pr_2\circ p|_{U_\alpha}=\pi\circ q_\alpha.$$  Let $a\colon G\times S_G\to S_G$ be the smooth map sending $(h,(t_i,g_i))$ to $(t_i,g_ih^{-1})$.  Then, $$a_E\circ p|_{U_\alpha}=\pi\circ a(q_\alpha,\pr_1\circ p|_{U_\alpha}),$$ where the right-hand side is a plot of $EG$.
\end{proof}

\begin{proposition}[$EG\to BG$ is Principal]\labell{p:EG principal}
$EG \to BG$ is a weakly D-numerable principal $G$-bundle.
\end{proposition}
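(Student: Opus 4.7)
The plan has three pieces: (i) the coordinate functions $t_i$ descend to smooth functions on $BG$ whose super-level sets $U_i$ cover $BG$; (ii) smooth local sections $s_i\colon U_i\to EG$ yield the local equivariant trivializations required by Definition~\ref{d:fibre bundles} for arbitrary plots $p\colon U\to BG$, establishing the principal $G$-bundle structure; and (iii) the family $\{t_i\}$ itself is the pointwise-finite smooth partition of unity that witnesses weak D-numerability.

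For (i), the coordinate projection $t_i\colon S_G\to[0,1]$ is smooth and both $\sim$-invariant and $G$-invariant, so it descends through the two quotients $S_G\to EG$ and $EG\to BG$ to a smooth function $t_i\colon BG\to[0,1]$. The sets $U_i:=t_i^{-1}((0,1])$ are D-open, and $\{U_i\}_{i\in\NN}$ covers $BG$ because $\sum_j t_j\equiv 1$ forces at least one coordinate to be positive at each point. On $U_i$ the coordinate $g_i$ is unambiguously determined within each equivalence class (by the very definition of $\sim$ on $S_G$), so the assignment $s_i([t_jg_j]):=(t_jg_jg_i^{-1})$ gives a set-theoretic section whose $i$-th group entry equals $e$. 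Smoothness of $s_i$ is checked by composing with an arbitrary plot $p\colon V\to U_i$: by the definition of the quotient diffeologies, one may shrink $V$ and lift $p$ to a plot $\tilde p=(t_j(v),g_j(v))_{j\in\NN}$ of $S_G$ with $t_i(v)>0$ throughout $V$; then $s_i\circ p$ is the image in $EG$ of $(t_j(v),g_j(v)g_i(v)^{-1})_{j\in\NN}$, which is smooth because inversion in $G$ is smooth and the quotient map $S_G\to EG$ is smooth.

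For (ii), given any plot $p\colon U\to BG$ and $u_0\in U$, pick $i$ with $t_i(p(u_0))>0$; by D-continuity of $t_i\circ p$, shrink to an open $V\ni u_0$ with $p(V)\subseteq U_i$, and set $q:=s_i\circ p|_V\colon V\to EG$, so that $\pi\circ q=p|_V$. Define $\psi\colon V\times G\to p|_V^{*}EG$ by $(v,h)\mapsto(v,h\cdot q(v))$; smoothness of $\psi$ follows from Proposition~\ref{p:EG action smooth} together with the smoothness of $q$, and $G$-equivariance is immediate from associativity of the action. The inverse sends $(v,y)\in p|_V^{*}EG$ to $(v,y_i^{-1})$, where $y_i$ denotes the $i$-th group coordinate of $y\in EG$; this map is well-defined on $t_i^{-1}((0,1])\subseteq EG$ and smooth by the same lifting-to-$S_G$ argument used in (i). This exhibits $p|_V^{*}EG\to V$ as locally equivariantly trivial.

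For (iii), by construction of $S_G$ only finitely many $t_i$ are nonzero at any point and $\sum_j t_j=1$, so $\{t_i\}_{i\in\NN}$ is a pointwise-finite smooth partition of unity on $BG$, and the support condition $t_i^{-1}((0,1])\subseteq U_i$ holds with equality. The main technical obstacle throughout is the bookkeeping with the double quotient $S_G\to EG\to BG$: every smoothness claim, for $t_i$, for the sections $s_i$, and for the inverse trivialization $(v,y)\mapsto(v,y_i^{-1})$, reduces to the single observation that a plot of $BG$ can, after refining its domain, be lifted all the way back to a plot of $S_G$, where the relevant formulas become visibly smooth.
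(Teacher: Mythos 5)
Your proposal is correct and follows essentially the same route as the paper: the coordinate functions $t_j$ descend to the partition of unity $\{\zeta_j\}$ with $U_j=\zeta_j^{-1}(0,1]$, and the local equivariant trivialisations over the $U_j$ come from normalising the $j$-th group coordinate to $e$ (your section $s_i$ is precisely the inverse of the paper's trivialisation $\varphi_j(t_ig_i)=(g_j,[t_ig_ig_j^{-1}])$). The only cosmetic difference is that you verify the plot-by-plot condition in the definition of a principal bundle directly, while the paper exhibits a global local trivialisation of $EG\to BG$ and lets pullback along plots do that work.
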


\begin{proof}
Define for each $j\in\NN$ the function $s_j\colon EG\to[0,1]$ by $s_j(t_ig_i):=t_j$.  These are diffeologically smooth, hence continuous with respect to the D-topology.  For each $j\in\NN$ define the open set $$V_j:=s_j^{-1}(0,1]=\{(t_ig_i) \mid t_j > 0\}.$$  Since for any point $(t_ig_i)\in EG$ we have that $\sum t_i=1$, it follows that $\{V_j\}$ forms an open cover of $EG$.  Since each $V_j$ is $G$-invariant, setting $U_j:= \pi(V_j)$ we get an open covering $\{U_j\}$ of $BG$.

Define for each $j\in \NN$ the map $\varphi_j\colon V_j \to G\times U_j$ by $$ \varphi_j(t_ig_i) := (g_j,[t_ig_ig_j^{-1}]).$$
This map is well-defined, smooth, and $G$-equivariant where $G$ acts trivially on $BG$ and via $k\cdot g=gk^{-1}$ on itself.  Moreover, $\varphi_j$ is invertible with smooth inverse: $$\varphi_j^{-1}(k,[t_ig_i]) = (t_ig_ik)$$ where we set $g_j=e$.  Thus, $\{(U_j,\varphi_j)\}$ is a local trivialisation of $EG\to BG$.  It follows that $EG\to BG$ is a locally trivial diffeological principal $G$-bundle.

Finally, for each $(t_ig_i)\in EG,$
$$\sum_{j \in \NN} s_j(t_ig_i) = \sum_{j \in \NN} t_j = 1,$$ and each $s_j$ is $G$-invariant and so descends to a smooth map $\zeta_j\colon BG\to[0,1]$ with $\zeta_j^{-1}(0,1]=U_j$.  The collection $\{\zeta_j\}$ is a pointwise-finite smooth partition of unity on $BG$, and so we have shown that $\pi\colon EG\to BG$ is weakly D-numerable.
\end{proof}

%% %% %%
\subsection{Classifying Bundles}\labell{ss:classifying}
%% %% %%

The main result concerning a classifying space of a diffeological group $G$ is that it classifies principal $G$-bundles, up to isomorphism.  To establish this, we follow the topological presentation by tom Dieck (see \cite[Sections 14.3, 14.4]{tD}).  While many of the proofs only require slight modifications to ensure smoothness, some such as the proof to Proposition~\ref{p:homotopy} requires the development of some diffeological theory related to the D-topology and homotopy, which appears in Lemmas~\ref{l:open cover}, \ref{l:smoothly T4}, and \ref{l:homotopy}.  To make the classification result precise, we introduce the following notation.

\begin{definition}[$\mathcal{B}_G(\cdot)$ and $\lbrack\cdot,BG\rbrack$]\labell{d:B}
Let $G$ be a diffeological group, and let $X$ be diffeological space.  Denote by $\mathcal{B}_G(X)$ the set of isomorphism classes of D-numerable principal $G$-bundles over $X$, and denote by $[X,BG]$ the set of smooth homotopy classes of smooth maps $X\to BG$.  Given another diffeological space $Y$ and a smooth map $\varphi\colon X\to Y$, define $\mathcal{B}(\varphi)$ to be the pullback $\varphi^*\colon \mathcal{B}_G(Y)\to\mathcal{B}_G(X)$, and $[\varphi,BG]$ to be the pullback $\varphi^*\colon [Y,BG]\to[X,BG]$.
\end{definition}

The main theorem of this section is the following.

\begin{theorem}[$\mathcal{B}_G(\cdot)$ and $\lbrack\cdot,BG\rbrack$ are Naturally Isomorphic]\labell{t:BG}
Let $\mathbf{Diffeol}_{\operatorname{HSP}}$ be the full subcategory of $\mathbf{Diffeol}$ consisting of Hausdorff, second-countable, smoothly paracompact diffeological spaces, and let $G$ be a diffeological group.  Then, $\mathcal{B}_G(\cdot)$ and $[\cdot,BG]$ are naturally isomorphic functors from $\mathbf{Diffeol}_{\operatorname{HSP}}$ to $\mathbf{Set}$.
\end{theorem}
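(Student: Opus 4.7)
The plan is to adapt the classical Milnor--tom~Dieck argument \cite[Sections~14.3, 14.4]{tD} to the diffeological setting, constructing natural transformations $\Phi_X\colon [X,BG]\to \mathcal{B}_G(X)$ and $\Psi_X\colon \mathcal{B}_G(X)\to [X,BG]$ and proving they are mutually inverse. For $\Phi_X$, given a smooth map $f\colon X\to BG$, set $\Phi_X([f]):=[f^*EG]$. This is a D-numerable principal $G$-bundle by Proposition~\ref{p:EG principal} together with Remark~\ref{r:pullback}. Well-definedness on homotopy classes is the statement that smoothly homotopic maps yield isomorphic pullback bundles; this is the content of the homotopy-invariance Proposition~\ref{p:homotopy} (whose proof relies on the technical Lemmas~\ref{l:open cover}, \ref{l:smoothly T4}, and \ref{l:homotopy}).

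For $\Psi_X$, let $\pi\colon E\to X$ be a D-numerable principal $G$-bundle with locally finite local equivariant trivialisation $\{(U_i,\varphi_i)\}_{i\in\NN}$ and subordinate smooth partition of unity $\{\zeta_i\}_{i\in\NN}$. Using equivariance, each $\varphi_i$ determines a smooth $G$-equivariant map $g_i\colon E|_{U_i}\to G$ by $\varphi_i(e)=(\pi(e),g_i(e))$. Since $\supp(\zeta_i)\subseteq U_i$, the functions $(\zeta_i\circ\pi)\cdot g_i$ extend smoothly by sending them to $0\cdot e$ outside $E|_{U_i}$, giving a well-defined map
$$\tilde F\colon E\to EG,\qquad \tilde F(e)=\bigl(\zeta_i(\pi(e))\,g_i(e)\bigr)_{i\in\NN}.$$
Local finiteness guarantees that only finitely many $\zeta_i\circ \pi$ are nonzero near any point, so $\tilde F$ lands in $S_G/{\sim}$ and is smooth; by construction $\sum_i \zeta_i(\pi(e))=1$. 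The map $\tilde F$ is $G$-equivariant for the action $k\cdot (t_ig_i)=(t_ig_ik^{-1})$ and hence descends to a smooth $F\colon X\to BG$. Furthermore, the pair $(\tilde F,F)$ exhibits a principal $G$-bundle isomorphism $E\cong F^*EG$, so that $\Phi_X\circ \Psi_X=\id$.

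The main obstacle is to show $\Psi_X$ is well-defined on isomorphism classes of bundles, i.e.\ that the resulting smooth homotopy class $[F]$ does not depend on the choice of local trivialisation and partition of unity, and, equivalently, that $\Psi_X\circ\Phi_X=\id$. The standard trick is the ``even/odd shuffle'': given two classifying maps $F_0,F_1\colon X\to BG$ of the same bundle $E$, one first shows that $F_0$ is smoothly homotopic to the classifying map $F_0^{\mathrm{even}}$ obtained by reindexing its nonzero coordinates into even positions of $EG$, and similarly $F_1\sim F_1^{\mathrm{odd}}$. Both re-indexings can be built from smooth homotopies in $EG$ by convex interpolation within the infinite simplex, projecting down to $BG$. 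Once the supports are disjoint in $\NN$, one forms the straight-line homotopy
$$H_s(x):=\bigl[(1-s)\,t_i^{\mathrm{even}}(x)\,g_i^{\mathrm{even}}(x),\ s\,t_j^{\mathrm{odd}}(x)\,g_j^{\mathrm{odd}}(x)\bigr],\qquad s\in[0,1],$$
which stays inside $EG$ since the coefficients sum to $1$, yielding a smooth homotopy $F_0\sim F_1$. Verifying smoothness of each step uses the Hausdorff, second-countable, and smoothly paracompact hypotheses to produce and manipulate the required partitions of unity.

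Naturality is then essentially automatic: for a smooth $\varphi\colon X\to Y$, both $\mathcal{B}_G(\varphi)$ and $[\varphi,BG]$ act by pullback along $\varphi$, and both $\Phi$ and $\Psi$ are built out of pullback operations, so they intertwine these maps. Putting the three steps together gives the asserted natural isomorphism of functors $\mathbf{Diffeol}_{\operatorname{HSP}}\to \mathbf{Set}$.
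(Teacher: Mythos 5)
Your proposal follows essentially the same route as the paper: the forward map is pullback along a classifying map (Propositions~\ref{p:EG classifying} and~\ref{p:homotopy}), the inverse is the partition-of-unity construction of $\widetilde{F}\colon E\to EG$ (Proposition~\ref{p:classifying maps}), and the independence of the homotopy class of the classifying map is the tom~Dieck interleaving/straight-line homotopy argument, which is exactly the paper's Proposition~\ref{p:uniqueness}. The argument is correct, with the understood caveat that the smoothness of $\widetilde{F}$ and of the reindexing homotopies requires the same careful local lifting to $S_G$ that the paper carries out.
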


To prove this, we begin by constructing a correspondence $[X,BG]\to\mathcal{B}_G(X)$.

\begin{proposition}[Pullback Bundles]\labell{p:EG classifying}
Let $X$ be a diffeological space, and let $G$ be a diffeological group.
For any smooth map $F\colon X \to BG$ the pullback bundle $F^*EG$ is weakly D-numerable.  Additionally, if $X$ is Hausdorff and smoothly paracompact, then $F^*EG$ is D-numerable.
\end{proposition}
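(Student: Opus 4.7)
The plan is to pull back the explicit weakly D-numerable structure on $EG\to BG$ constructed in Proposition~\ref{p:EG principal} directly along $F$. Recall the local trivialisation $\{(U_j,\varphi_j)\}_{j\in\NN}$ with $U_j = \pi(s_j^{-1}(0,1])$, and the pointwise-finite smooth partition of unity $\{\zeta_j\}_{j\in\NN}$ on $BG$ satisfying $\zeta_j^{-1}(0,1] = U_j$. Setting $W_j := F^{-1}(U_j)$ and $\eta_j := \zeta_j\circ F$, smoothness of $F$ (hence D-continuity) makes each $W_j$ D-open in $X$, and Remark~\ref{r:pullback} gives that $(F|_{W_j})^*(EG|_{U_j})$ is trivial, yielding a local trivialisation of $F^*EG$ indexed by $\NN$.

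Next I would verify the three conditions of weak D-numerability for $\{\eta_j\}$ on $X$. Smoothness is immediate from composition. Pointwise-finiteness transfers from $\{\zeta_j\}$: for any $x\in X$ with $F(x)=[t_ig_i]$, only finitely many $t_i$ are non-zero by the definition of the join, so only finitely many $\eta_j(x)=\zeta_j(F(x))$ are non-zero. The sum-to-one condition $\sum_j\eta_j(x) = \sum_j\zeta_j(F(x)) = 1$ and the support condition $\eta_j^{-1}(0,1] = F^{-1}(U_j) = W_j$ both transfer by direct computation. This establishes the first claim.

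For the second claim, under the Hausdorff and smoothly paracompact hypotheses on $X$, I would simply appeal to Remark~\ref{r:d-numerable}, which upgrades any weakly D-numerable principal diffeological fibre bundle over such a base to a D-numerable one; the bundle $F^*EG$ is principal by Remark~\ref{r:pullback}, so the remark applies.

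I do not anticipate a substantive obstacle: the argument amounts to verifying that pullback commutes with local trivialisations and with smooth partitions of unity in the expected way. The only non-formal ingredient is the upgrade from weak to full D-numerability, which is absorbed into Remark~\ref{r:d-numerable} and ultimately rests on standard smooth paracompactness arguments rather than anything specific to the Milnor construction.
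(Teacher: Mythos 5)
Your proposal is correct and follows essentially the same route as the paper: pull back the partition of unity $\{\zeta_j\}$ and the cover $\{U_j\}$ from Proposition~\ref{p:EG principal} along $F$, check weak D-numerability directly, and invoke smooth paracompactness (via Remark~\ref{r:d-numerable}, which is the paper's refinement argument) for the upgrade to D-numerability. The only cosmetic difference is that you cite Remark~\ref{r:pullback} for triviality over each $W_j$ where the paper writes out the explicit equivariant trivialisation $\psi_j(x,(t_ig_i))=(g_j,x)$; both are valid.
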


\begin{proof}
Let $F\colon X\to BG$ be a smooth map.  Let the collection $\{\zeta_j\colon BG\to[0,1]\}_{j\in\NN}$ be the partition of unity in the proof of Proposition~\ref{p:EG principal}.  For each $j\in\NN$, define $\xi_j\colon X\to[0,1]$ by $\xi_j:=\zeta_j\circ F.$ Then $\{\xi_j\}$ is a pointwise-finite smooth partition of unity, and we have an open cover of $X$ given by open sets $W_j:=\xi_j^{-1}(0,1]$.  We now show that $F^*EG$ is trivial over each $W_j$.

Recall $$F^*EG=\{(x,(t_ig_i))\in X\times EG\mid F(x)=[t_ig_i]\},$$ where for all $k\in G$ we have the smooth action $k\cdot(x,(t_ig_i))=(x,(t_ig_ik^{-1}))$. Define $\widetilde{F}\colon F^*EG\to EG$ to be the second projection map.  Define $\psi_j\colon \pi^{-1}(W_j)\to G\times W_j$ by $$\psi_j(x,(t_ig_i)):=(g_j,x).$$ Then $\psi_j$ is well-defined, smooth, $G$-equivariant, and has a smooth inverse.  It follows that $F^*EG$ is weakly D-numerable.

If $X$ is Hausdorff and smoothly paracompact, we can choose an appropriate open refinement of $\{W_j\}$ and smooth partition of unity subordinate to it (see, for example, \cite[Lemma 41.6]{munkres}).  This completes the proof.
\end{proof}

To ensure that the correspondence $[X,BG]\to\mathcal{B}_G(X)$ is well-defined, we must show that smoothly homotopic maps yield isomorphic bundles.  This is Proposition~\ref{p:homotopy}.  To prove this, we need to add another topological constraint, second-countability, and establish a series of lemmas.  Also, we need to consider the D-topology of a product of diffeological spaces.  Generally, the D-topology induced by the product diffeology contains the product topology.  However, we have the following result of Christensen, Sinnamon, and Wu in \cite[Lemma 4.1]{CSW}.

\begin{lemma}\labell{l:prod top}
Let $X$ and $Y$ be diffeological spaces such that the D-topology of $Y$ is locally compact.  Then the D-topology of the product $X\times Y$ is equal to the product topology.
\end{lemma}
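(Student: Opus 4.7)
The plan is to establish the non-trivial inclusion: every $W$ that is D-open in $X\times Y$ is open in the product of the two D-topologies. (The reverse inclusion is automatic because the two projections are smooth, hence D-continuous, so every product of D-opens is D-open.) Fix $(x_0,y_0)\in W$ and aim to produce a D-open $A\ni x_0$ in $X$ and a D-open $V\ni y_0$ in $Y$ with $A\times V\subseteq W$.

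Since the slice inclusion $\iota_{x_0}\colon Y\to X\times Y$, $y\mapsto(x_0,y)$, is smooth, $\iota_{x_0}^{-1}(W)$ is D-open in $Y$. Local compactness of the D-topology of $Y$ then furnishes a compact $K\subseteq Y$ with D-open interior $V\ni y_0$ satisfying $\{x_0\}\times K\subseteq W$. Set $A:=\{x\in X\mid\{x\}\times K\subseteq W\}$, so $x_0\in A$ and $A\times V\subseteq W$; everything now reduces to showing $A$ is D-open. Given a plot $p\colon U\to X$ with $p(u_0)\in A$, the smooth map $g(u,y):=(p(u),y)$ pulls $W$ back to a D-open $\widetilde W\subseteq U\times Y$ containing $\{u_0\}\times K$. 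Compactness of $K$ and the classical tube lemma would then deliver an $\epsilon>0$ with $B_\epsilon(u_0)\times K\subseteq\widetilde W$, whence $B_\epsilon(u_0)\subseteq p^{-1}(A)$—\emph{provided} that $\widetilde W$ is already open in the product topology on $U\times Y$ (Euclidean on $U$, D-topology on $Y$).

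Verifying this product-topology openness is the heart of the argument. For $(u_1,y_1)\in\widetilde W$, the smooth map $u\mapsto(u,y_1)$ produces a closed Euclidean ball $\overline{B_\epsilon(u_1)}\subseteq U$ with $\overline{B_\epsilon(u_1)}\times\{y_1\}\subseteq\widetilde W$. I would set $A':=\{y\in Y\mid\overline{B_\epsilon(u_1)}\times\{y\}\subseteq\widetilde W\}$ and check that $A'$ is D-open by testing against an arbitrary plot $q\colon V'\to Y$: on a slightly larger ball $B_{\epsilon'}(u_1)\times V'$, the map $R(u,v):=(u,q(v))$ is a plot of $U\times Y$, and $v\in q^{-1}(A')$ holds precisely when $\overline{B_\epsilon(u_1)}\times\{v\}\subseteq R^{-1}(\widetilde W)$, which is an open condition on $v$ by the tube lemma applied to the compact $\overline{B_\epsilon(u_1)}$. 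Hence $A'$ is D-open in $Y$, contains $y_1$, and satisfies $B_\epsilon(u_1)\times A'\subseteq\widetilde W$, as required.

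The main obstacle is exactly this intermediate step. Diffeological openness is intrinsically pointwise—verified one plot at a time—while product-topology openness demands a simultaneous neighbourhood on both factors; bridging the two forces two applications of the tube lemma, first using compactness of a Euclidean ball (to promote $\widetilde W$ from D-open to product-open on $U\times Y$), and then using compactness of $K\subseteq Y$ (to collapse the pointwise containment $\{u_0\}\times K\subseteq\widetilde W$ into an actual tube).
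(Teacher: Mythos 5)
Your argument is correct, and it is worth noting at the outset that the paper does not actually prove this lemma: it is imported verbatim from Christensen--Sinnamon--Wu \cite{CSW} (their Lemma 4.1), so there is no internal proof to compare against. What you have written is a sound, self-contained argument, and its two-stage structure is exactly the right way to organise it: you first settle the special case where one factor is a Euclidean domain $U$ (showing that a D-open subset of $U\times Y$ is open for the product of the Euclidean topology with $D(Y)$, via compactness of small closed balls and one application of the tube lemma), and then bootstrap to general $X$ by testing $A=\{x\mid\{x\}\times K\subseteq W\}$ against an arbitrary plot $p\colon U\to X$ and applying the tube lemma a second time to the compact set $K\subseteq Y$; the smoothness of $p\times\id_Y$ and of $(u,v)\mapsto(u,q(v))$ for the product diffeologies, and the fact that the D-topology of an open subset of $\RR^n$ is the Euclidean one, are the standard facts you rely on, and they are all fine. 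One point to make explicit: in the first step you need local compactness of $D(Y)$ in the strong sense that every neighbourhood of a point contains a compact neighbourhood (equivalently, each point has a neighbourhood basis of compact sets); the weaker ``each point has some compact neighbourhood'' would not let you place $K$ inside $\iota_{x_0}^{-1}(W)$, and since D-topologies need not be Hausdorff the two notions do not automatically coincide. This stronger convention is the one in force in \cite{CSW}, so with that reading your proof goes through completely.
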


\begin{lemma}\labell{l:open cover}
Let $X$ be a Hausdorff, paracompact, second-countable diffeological space, and let $\pi\colon E\to X\times\RR$ be a locally trivial diffeological fibre bundle.   Then for any $a<b$, there exists a countable locally finite open cover $\mathcal{U}$ of $X$ such that $\pi$ is trivial over $U\times[a,b]$ for any $U\in\mathcal{U}$.  In the case of a principal bundle, the trivialisation can be chosen to be equivariant.
\end{lemma}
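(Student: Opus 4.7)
The strategy is to fix $x_0\in X$, construct an open neighbourhood $W$ of $x_0$ over which $\pi$ trivialises on $W\times[a,b]$, and then extract a countable locally finite open cover of $X$ from the collection of such $W$'s using the topological hypotheses on $X$.

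By Lemma~\ref{l:prod top}, the D-topology on $X\times\RR$ coincides with the product topology, since $\RR$ is locally compact. Local triviality of $\pi$ therefore supplies, around each point $(x_0,t)$ with $t\in[a,b]$, an open product neighbourhood $V_t\times J_t$ with $J_t$ an open interval over which $\pi$ trivialises (equivariantly, in the principal case). Compactness of $\{x_0\}\times[a,b]$ reduces to finitely many such intervals $I_1,\dots,I_n$ covering $[a,b]$, and taking $W$ to be the (finite) intersection of the corresponding $V_t$'s produces smooth (equivariant) trivialisations $\alpha_j\colon\pi^{-1}(W\times I_j)\to(W\times I_j)\times F$ for each $j$, where $F$ is the typical fibre ($F=G$ in the principal case).

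The main step, and the principal obstacle, is to combine these $n$ local trivialisations into a single trivialisation over $W\times(I_1\cup\cdots\cup I_n)$. By induction it suffices to handle $n=2$ with $I_1\cap I_2\neq\emptyset$. The transition $\alpha_1\circ\alpha_2^{-1}$ is encoded by a smooth family of diffeomorphisms of $F$ parametrised by $W\times(I_1\cap I_2)$ (in the principal case, left-multiplication by a smooth cocycle $\phi\colon W\times(I_1\cap I_2)\to G$). The diffeological difficulty is that smoothness does not propagate across seams: a piecewise trivialisation built from two closed pieces meeting only at a slice need not be smooth. To remedy this, I would choose an open subinterval $K\subseteq I_1\cap I_2$ with $\overline K\subseteq I_1\cap I_2$ and a smooth function $\lambda\colon I_2\to I_1\cap I_2$ that restricts to the identity on $K$ (standard bump-function techniques produce such $\lambda$). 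Pulling back the transition family along $\mathrm{id}_W\times\lambda$ and using the result to modify $\alpha_2$ yields a smooth (equivariant) trivialisation $\alpha_2'$ over $W\times I_2$ that agrees with $\alpha_1$ on $W\times K$. Finally, choose open subintervals $I_1'\subseteq I_1$ and $I_2'\subseteq I_2$ with $I_1'\cap I_2'=K$ and $I_1'\cup I_2'=I_1\cup I_2$; then $\alpha_1|_{W\times I_1'}$ and $\alpha_2'|_{W\times I_2'}$ agree on the \emph{open} overlap $W\times K$ and paste into a smooth (equivariant) trivialisation on $W\times(I_1\cup I_2)$. Iterating this process and restricting to $W\times[a,b]$ delivers the required local trivialisation.

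To conclude, cover $X$ by the resulting neighbourhoods $\{W_{x_0}\}_{x_0\in X}$. Second-countability makes $X$ Lindel\"of, so we may extract a countable subcover; paracompactness then furnishes a locally finite open refinement, which is automatically countable because any locally finite family in a Lindel\"of space is countable (cover $X$ by neighbourhoods each hitting only finitely many members of the family, extract a countable subcover, and note that every family member meets at least one of these). Each member of the refinement lies in some $W_{x_0}$, hence $\pi$ trivialises over $U\times[a,b]$ for every $U$ in the refinement. Equivariance is preserved at every step in the principal case.
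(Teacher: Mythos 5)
Your proof is correct, and its overall skeleton --- product topology via Lemma~\ref{l:prod top}, compactness of $[a,b]$ to get finitely many strips $W\times I_j$, gluing, then a countable locally finite refinement --- is the same as the paper's. The one substantive difference is the gluing step: the paper simply cites Iglesias-Zemmour (Note 1 of Article 8.16 and Lemma 1 of Article 8.19) for the fact that finitely many overlapping trivialisations over $W\times I_1,\dots,W\times I_n$ combine into one over $W\times[a,b]$, whereas you prove it from scratch. Your reparametrisation trick --- pulling the transition family back along a smooth $\lambda\colon I_2\to I_1\cap I_2$ that is the identity on a smaller open interval $K$ with $\overline K\subseteq I_1\cap I_2$, so that the modified $\alpha_2'$ agrees with $\alpha_1$ on the \emph{open} set $W\times K$ and the two can be pasted over an open overlap --- correctly addresses the genuine diffeological issue that gluing along a closed slice need not produce a smooth map, and equivariance survives because the transition acts by left multiplication, which commutes with the principal right action. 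The only detail worth making explicit in the induction is that the intervals must be ordered so that each new $I_{j}$ meets the union of the preceding ones (possible since they cover the connected set $[a,b]$), so that the union stays an interval at each stage. Your closing argument that a locally finite family in a Lindel\"of space is countable also fills in a detail the paper leaves as ``take an appropriate refinement.'' In short: same route, but you supply a self-contained proof of the lemma the paper outsources.
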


\begin{proof}
Since $\RR$ is locally compact, the D-topology on $X\times\RR$ is equal to the product topology by Lemma~\ref{l:prod top}.  Fix a local  trivialisation (equivariant in the case of a principal bundle) $\mathcal{V}$ of $\pi$, and fix $x\in X$.  Since $[a,b]$ is compact there exist $k>0$, for each $i=1,\dots,k$ an open interval $(s_i,t_i)$ such that $\bigcup_i(s_i,t_i)\supseteq[a,b]$, and for each $i$ an open neighbourhood $U_i$ of $x$ such that $U_i\times(s_i,t_i)$ is contained in an element of $\mathcal{V}$.

Let $U=\bigcap_iU_i$.  By \cite[Note 1 of Article 8.16 and Lemma 1 of Article 8.19]{iglesias}, $\pi$ is trivial over $U\times[a,b]$.  Now, take such an open neighbourhood $U$ for each $x\in X$; this is an open cover of $X$.  We now take an appropriate refinement of this cover to obtain $\mathcal{U}$. 
\end{proof}

It is standard that given a normal topological space, one can find a continuous function that separates disjoint closed sets.  We need a smooth version of this fact.  

\begin{lemma}\labell{l:smoothly T4}
Let $X$ be a smoothly paracompact diffeological space, and let $A$ and $B$ be disjoint closed subsets of $X$.  Then there exists a smooth function $f\colon X\to[0,1]$ such that $f|_A\equiv 0$ and $f|_B\equiv 1$.
\end{lemma}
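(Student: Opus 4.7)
The plan is to produce $f$ directly from a smooth partition of unity, rather than through any Urysohn-style iterative construction. Since $A \cap B = \emptyset$, the two open sets $U_1 := X \setminus B$ and $U_2 := X \setminus A$ cover $X$. By smooth paracompactness (Definition~\ref{d:smooth paracompactness}), there exists a smooth partition of unity $\{\varphi_1, \varphi_2\}$ subordinate to $\{U_1, U_2\}$, so that $\supp(\varphi_i) \subseteq U_i$ for $i=1,2$ and $\varphi_1 + \varphi_2 \equiv 1$.

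I would then set $f := \varphi_2$. Checking the boundary conditions: for $x \in A$, we have $x \notin X \setminus A = U_2$, so $x \notin \supp(\varphi_2)$ and hence $f(x) = \varphi_2(x) = 0$; for $x \in B$, we have $x \notin X \setminus B = U_1$, so $x \notin \supp(\varphi_1)$, which forces $\varphi_1(x) = 0$ and therefore $f(x) = \varphi_2(x) = 1$. Since the values of a partition of unity lie in $[0,1]$, the function $f$ maps into $[0,1]$ as required, and it is smooth because each member of a smooth partition of unity is smooth.

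There is essentially no obstacle here beyond correctly unpacking the definition of smooth paracompactness: the only mild subtlety is making sure that the definition in use provides a partition of unity \emph{subordinate} (in the sense of containment of supports) to an arbitrary open cover, rather than merely one whose positive sets are contained in cover elements. This is what Definition~\ref{d:smooth paracompactness} supplies, so the argument is immediate. In particular, this bypasses the usual dyadic-rational inductive construction from classical Urysohn's lemma, which would be problematic in the smooth category without a pre-existing partition of unity.
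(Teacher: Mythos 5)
Your proof is correct and is essentially identical to the paper's: both take the two-element open cover $\{X\smallsetminus A, X\smallsetminus B\}$, extract a subordinate smooth partition of unity, and take $f$ to be the member supported in $X\smallsetminus A$. No further comment is needed.
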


\begin{proof}
Consider the open cover $\{X\smallsetminus A,X\smallsetminus B\}$.  It admits a smooth partition of unity $\{\zeta_{X\smallsetminus A},\zeta_{X\smallsetminus B}\}$.  Let $f=\zeta_{X\smallsetminus A}$.\footnote{Thanks to Dan Christensen for this proof, which is much simpler than the original.}
\end{proof}

\begin{lemma}\labell{l:homotopy}
Let $X$ be a Hausdorff, second-countable, smoothly paracompact diffeological space, and let $\pi\colon E\to X\times\RR$ be a locally trivial diffeological fibre bundle.   Then $E|_{X\times\{0\}}$ is bundle-diffeomorphic to $E|_{X\times\{1\}}$.  In the case of a principal bundle, the bundle-diffeomorphism can be chosen to be equivariant.
\end{lemma}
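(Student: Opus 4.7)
The plan is to use a partition-of-unity argument, mirroring the classical topological proof but working in the diffeological category and taking care with smoothness.

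First, I would apply Lemma~\ref{l:open cover} to $[a,b]=[0,1]$ to obtain a countable locally finite open cover $\{U_n\}_{n\in\NN}$ of $X$ such that $\pi$ is (equivariantly, in the principal case) trivial over each $U_n\times[0,1]$ via some $\varphi_n\colon\pi^{-1}(U_n\times[0,1])\to U_n\times[0,1]\times F$. Smooth paracompactness of $X$ then yields a smooth partition of unity $\{\zeta_n\}$ subordinate to $\{U_n\}$. Define the smooth functions $\tau_0\equiv 0$ and $\tau_n:=\zeta_1+\cdots+\zeta_n$ for $n\geq 1$. These take values in $[0,1]$, are monotonically non-decreasing in $n$, and by local finiteness of the cover, for each $x\in X$ there is an open neighbourhood $V\ni x$ and an $N$ such that $\tau_n|_V\equiv 1$ for all $n\geq N$.

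Next, for each $n\geq 1$ I would construct a bundle diffeomorphism $H_n\colon E_{\tau_{n-1}}\to E_{\tau_n}$ covering $\id_X$, where $E_\tau$ denotes the pullback of $E$ along $(\id,\tau)\colon X\to X\times[0,1]$. Off $\supp(\zeta_n)$ one has $\tau_{n-1}=\tau_n$, so define $H_n$ to be the identity there. Over $U_n$, use $\varphi_n$ to send an element of $\pi^{-1}(x,\tau_{n-1}(x))$ with trivialisation image $(x,\tau_{n-1}(x),f)$ to the element of $\pi^{-1}(x,\tau_n(x))$ whose trivialisation image is $(x,\tau_n(x),f)$. On the overlap $U_n\setminus\supp(\zeta_n)$ the sliding is trivial, so the two partial definitions glue. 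Smoothness is checked by composing with plots and appealing to the smoothness of $\varphi_n$ and $\varphi_n^{-1}$, while equivariance is automatic from the equivariance of $\varphi_n$ in the principal case.

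Finally, set
\[
H := \cdots\circ H_2\circ H_1\colon E|_{X\times\{0\}}=E_{\tau_0}\longrightarrow E|_{X\times\{1\}}.
\]
This composition is well-defined and smooth because, near any given $x\in X$, local finiteness forces $H_n$ to be the identity for all $n>N$, so on a neighbourhood of $x$ the composition reduces to the finite smooth composition $H_N\circ\cdots\circ H_1$. An inverse is built analogously by running the construction with the functions $1-\tau_n$, so $H$ is a bundle diffeomorphism, equivariant in the principal case.

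The main obstacle I anticipate is the smoothness verification for $H_n$ at points lying on the boundary of $\supp(\zeta_n)$ inside $U_n$: one must unpack the pullback diffeology on $E_{\tau_{n-1}}$ and $E_{\tau_n}$ and check that an arbitrary plot is carried to a plot. This ultimately comes down to the fact that the formula for $H_n$ agrees with the identity wherever $\tau_{n-1}=\tau_n$, so the expression extends smoothly across the boundary, but it requires the careful local argument supplied by the trivialisations $\varphi_n$ together with Lemma~\ref{l:prod top} to reconcile product and D-topologies where needed.
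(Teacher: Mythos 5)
Your proposal is correct and follows essentially the same route as the paper: both invoke Lemma~\ref{l:open cover} to get a locally finite trivialising cover of $X\times[0,1]$ and then slide the $\RR$-coordinate from $0$ to $1$ by an infinite composition of bundle maps that is locally a finite composition. The only (cosmetic) difference is that the paper uses bump functions $b_i$ equal to $1$ on a shrunk cover (via Lemma~\ref{l:smoothly T4}) and self-maps $r_i(x,t)=(x,t+(1-t)b_i(x))$ of the cylinder, whereas you use partial sums $\tau_n$ of a partition of unity and maps between the graph pullbacks $E_{\tau_{n-1}}\to E_{\tau_n}$.
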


\begin{proof}
By Lemma~\ref{l:open cover}, there exists a locally finite open cover $\{U_i\}_{i\in\NN}$ of $X$ such that $\pi$ is trivial over $U_i\times[0,1]$ for each $i$.  Let $\{(U_i\times[0,1],\psi_i)\}$ be the corresponding local trivialisation (equivariant in the case of a principal bundle) for $\pi$ restricted over $X\times[0,1]$.  Since $X$ is Hausdorff and paracompact, there is a locally finite open refinement $\{V_i\}_{i\in\NN}$ of $\{U_i\}$ such that $\overline{V_i}\subseteq U_i$ for each $i$.  By Lemma~\ref{l:smoothly T4} there is a family of smooth maps $\{b_i\colon X\to[0,1]\}_{i\in\NN}$ such that $b_i|_{V_i}=1$ and $\supp(b_i)\subseteq U_i$ for each $i$.

Fix $i$, and denote by $F$ the fibre of $\pi$.  Let $r_i\colon X\times[0,1]\to X\times[0,1]$ be the smooth map sending $(x,t)$ to $(x,t+(1-t)b_i(x))$ and let $R_i\colon E\to E$ be the map equal to the identity over the complement of $U_i\times[0,1]$, and such that for all $(x,t,a)\in U_i\times[0,1]\times F$, $$\psi_i\circ R_i\circ\psi_i^{-1}(x,t,a)=(x,t+(1-t)b_i(x),a).$$  The pair $(r_i,R_i)$ form a smooth bundle map over $X\times[0,1]$, whose restriction to $E|_{X\times\{1\}}$ is the identity map.  Moreover, the restriction of $R_i$ to $E|_{X\times\{0\}}$ is a diffeomorphism onto its image.  In the case of a principal bundle, this is equivariant.

Let $r$ be the composition of the maps $r_i$, taken in order: $r=\dots\circ r_2\circ r_1$.  This is well-defined since $r_i(x)=x$ for all but finitely many $i$.  Similarly, define $R$ to be the composition of all $R_i$.  The pair $(r,R)$ is a smooth bundle map, whose restriction to $E|_{X\times\{1\}}$ is the identity map.  Moreover, the restriction of $R$ to $E|_{X\times\{0\}}$ is a diffeomorphism onto $E|_{X\times\{1\}}$.  Again, in the case of a principal bundle, this is also equivariant.
\end{proof}

\begin{proposition}[Homotopic Maps and Isomorphic Bundles]\labell{p:homotopy}
Let $X$ and $Y$ be diffeological spaces, and assume that $X$ is Hausdorff, second-countable, and smoothly paracompact.  Let $f_i\colon X\to Y$ ($i=0,1$) be smooth maps with a smooth homotopy $H\colon X\times\RR\to Y$ between them, and let $\pi\colon E\to Y$ be a diffeological fibre bundle.  If $H^*E\to X$ is locally trivial, then the pullback bundles $f_i^*E\to X$ ($i=0,1$) are bundle-diffeomorphic.  In the case of a principal bundle, the bundle-diffeomorphism can be chosen to be equivariant.
\end{proposition}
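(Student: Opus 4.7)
The plan is to reduce Proposition~\ref{p:homotopy} directly to Lemma~\ref{l:homotopy} applied to the pullback bundle $H^*E \to X\times\RR$. By hypothesis this pullback is locally trivial (as a principal bundle, equivariantly locally trivial), and $X$ satisfies the standing assumptions of Lemma~\ref{l:homotopy}: Hausdorff, second-countable, smoothly paracompact. Thus Lemma~\ref{l:homotopy} produces a bundle-diffeomorphism (equivariant in the principal case) between the restrictions $(H^*E)|_{X\times\{0\}}$ and $(H^*E)|_{X\times\{1\}}$.

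The remaining step is to identify, canonically, $(H^*E)|_{X\times\{i\}} \cong f_i^*E$ for $i=0,1$. Let $\iota_i\colon X\to X\times\RR$ be the smooth embedding $x\mapsto(x,i)$; since $H$ is a homotopy from $f_0$ to $f_1$, we have $H\circ\iota_i = f_i$. Pullback is functorial (up to canonical diffeomorphism) with respect to composition of smooth maps, so $\iota_i^*(H^*E) \cong (H\circ\iota_i)^*E = f_i^*E$, and the left-hand side coincides with the restriction $(H^*E)|_{X\times\{i\}}$ (the restriction being, by definition, the pullback along $\iota_i$, which carries the subset diffeology that also defines the restriction). Composing these canonical identifications at $i=0$ and $i=1$ with the bundle-diffeomorphism from Lemma~\ref{l:homotopy} yields the desired bundle-diffeomorphism $f_0^*E\to f_1^*E$, and equivariance is preserved throughout because the $G$-action on every bundle in sight is induced from the action on $E$ via the projection to the second factor.

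The only real obstacle is bookkeeping: checking that the canonical identification $\iota_i^*(H^*E)\cong f_i^*E$ really is a bundle-diffeomorphism of diffeological spaces (not merely a set-theoretic bijection) and is equivariant in the principal case. Both are routine from the definitions, since pullbacks carry the subset diffeology of the relevant product, and the $G$-action acts trivially on base coordinates. No further analytic input beyond Lemmas~\ref{l:prod top}, \ref{l:open cover}, \ref{l:smoothly T4}, and \ref{l:homotopy} is needed; all the work has already been done at the level of $X\times\RR$.
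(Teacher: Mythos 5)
Your proposal is correct and follows exactly the same route as the paper's proof: apply Lemma~\ref{l:homotopy} to the locally trivial bundle $H^*E\to X\times\RR$ and then identify the restrictions $(H^*E)|_{X\times\{i\}}$ with $f_i^*E$ via functoriality of pullbacks. The paper states this identification in one sentence where you spell out the bookkeeping, but the content is identical.
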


\begin{proof}
Assume that $H^*E\to X$ is locally trivial.  Then there is a bundle-diffeomorphism between $H^*E|_{X\times\{0\}}$ and $H^*E|_{X\times\{1\}}$ by Lemma~\ref{l:homotopy}.  But these two restricted bundles are exactly bundle-diffeomorphic to $f_0^*E$ and $f_1^*E$, respectively.
\end{proof}

\begin{remark}\labell{r:homotopy}
Iglesias-Zemmour proves the above proposition for diffeological fibre bundles admitting a diffeological connection; see Definition~\ref{d:iz-connection}, \cite[Article 8.32 and Article 8.34]{iglesias}.
\end{remark}

We now have a well-defined correspondence $[X,BG]\to\mathcal{B}_G(X)$.  To construct an inverse correspondence, we must first define a ``classifying map'' $X\to BG$ for a principal $G$-bundle $E\to X$.  In order to show that the classifying map is smooth, we require that $E$ be weakly D-numerable with a Hausdorff and smoothly paracompact base (in which case, $E$ is D-numerable).

\begin{proposition}[Classifying Maps]\labell{p:classifying maps}
If $\pi\colon E\to X$ is a weakly D-numerable principal $G$-bundle in which $X$ is Hausdorff and smoothly paracompact, then there is a smooth $G$-equivariant map $\widetilde{F}\colon E\to EG$ which descends to a smooth map $F\colon X\to BG$, called a \textbf{classifying map} of $\pi$, such that $E$ is isomorphic as a principal $G$-bundle to $F^*EG$.  
\end{proposition}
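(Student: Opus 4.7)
The plan is to mimic the classical Milnor construction, using a partition of unity to glue together local $G$-valued maps into a map to the join $EG$. First, invoke Remark~\ref{r:d-numerable}: since $X$ is Hausdorff and smoothly paracompact, weak D-numerability upgrades to D-numerability, so we may fix a locally finite equivariant local trivialisation $\{(U_i,\varphi_i)\}_{i\in\NN}$ together with a smooth partition of unity $\{\zeta_i\}_{i\in\NN}$ subordinate to $\{U_i\}$, i.e.\ $\supp(\zeta_i)\subseteq U_i$. For each $i$, write the second component of $\varphi_i$ as $\sigma_i\colon\pi^{-1}(U_i)\to G$; by the equivariance clause of Definition~\ref{d:fibre bundles}(5), each $\sigma_i$ intertwines the $G$-actions on $E$ and on $G$.

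Next, define $\widetilde{F}\colon E\to EG$ by the formula
\[
\widetilde{F}(e) = \bigl(\zeta_i(\pi(e))\,\sigma_i(e)\bigr)_{i\in\NN}
\]
in the join notation of Definition~\ref{d:join}, with the convention that the $i$-th $G$-coordinate is a placeholder (say $e\in G$) whenever $\zeta_i(\pi(e))=0$; the equivalence relation defining the join makes this choice immaterial, and (swapping $\sigma_i$ for $\sigma_i^{-1}$ if the action conventions demand it) a direct computation yields $G$-equivariance. Because $\widetilde{F}$ is $G$-invariant on base-level and covers the quotient map $E\to X$, once smoothness is established it descends to a well-defined smooth $F\colon X\to BG$ via the subduction $E\to X=E/G$, and the candidate isomorphism is then $\Phi\colon E\to F^{*}EG$, $\Phi(e):=(\pi(e),\widetilde{F}(e))$.

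The main obstacle is the smoothness of $\widetilde{F}$, which requires a careful local-finiteness argument. Given a plot $p\colon V\to E$ and $v_{0}\in V$, set $x_{0}:=\pi(p(v_{0}))$ and use local finiteness of $\{\supp(\zeta_i)\}$ to choose an open neighbourhood $W$ of $x_{0}$ meeting only finitely many $\supp(\zeta_i)$. For those finitely many indices where $x_{0}\notin\supp(\zeta_i)$, shrink $W$ to avoid $\supp(\zeta_i)$; for those finitely many remaining indices $i\in I$, note that $x_{0}\in\supp(\zeta_i)\subseteq U_i$ and shrink $W$ further so that $W\subseteq\bigcap_{i\in I}U_i$. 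On $V'':=p^{-1}(\pi^{-1}(W))$, each $\sigma_i\circ p$ is well-defined and smooth for $i\in I$, and $\zeta_i\circ\pi\circ p\equiv 0$ for $i\notin I$. This gives a smooth lift $V''\to S_{G}$ of $\widetilde{F}\circ p|_{V''}$ (assigning the placeholder $e\in G$ to coordinates outside $I$), so $\widetilde{F}\circ p$ is a plot of $EG$ by Definition~\ref{d:EG}.

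Finally, to show $\Phi$ is a principal bundle isomorphism: on each fibre $\pi^{-1}(x)$, $\Phi$ is a $G$-equivariant map between two $G$-torsors and is therefore bijective, so $\Phi$ is a bijection. To produce a smooth inverse, observe that on any chart $U_i$, $F^{*}EG|_{U_i}$ is trivialised via the pullback of the trivialisation from Proposition~\ref{p:EG principal}, and in these coordinates $\Phi|_{\pi^{-1}(U_i)}$ takes the form $(x,g)\mapsto(x,\alpha_{i}(x)g)$ for some smooth $\alpha_{i}\colon U_i\to G$; its pointwise inverse is visibly smooth, so $\Phi^{-1}$ is smooth. The principal $G$-bundle isomorphism $E\cong F^{*}EG$ then identifies $\pi$ as the pullback of $EG\to BG$ along $F$, completing the construction of the classifying map.
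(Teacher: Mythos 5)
Your proposal is correct and follows essentially the same route as the paper: the same formula $\widetilde{F}(e)=(\zeta_i(\pi(e))\sigma_i(e))$, the same local-finiteness-plus-support-avoidance argument to produce a local lift of $\widetilde{F}\circ p$ to $S_G$, and the same candidate isomorphism $\Phi(e)=(\pi(e),\widetilde{F}(e))$. The only cosmetic difference is at the end, where the paper writes down the explicit local inverse $\Phi^{-1}(x,(t_ig_i))=\psi_j^{-1}(x,g_j)$ while you argue via torsor bijectivity and local triviality over $\{\zeta_i>0\}$ (note these sets, rather than all of $U_i$, are where the pullback trivialisation from Proposition~\ref{p:EG classifying} lives, but they still cover $X$); both are fine.
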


\begin{proof}
Let $\pi\colon E\to X$ be a weakly D-numerable principal $G$-bundle in which $X$ is Hausdorff and smoothly paracompact (and hence $\pi$ is in fact D-numerable by Remark~\ref{r:d-numerable}), and fix a locally finite local trivialisation $\{(W_j,\psi_j)\}_{j\in\NN}$ and a smooth partition of unity $\{\xi_j\}_{j\in\NN}$ subordinate to $\{W_j\}$.  Define a map $\widetilde{F}\colon E\to EG$ by $$\widetilde{F}(y):=\big((\xi_i\circ\pi(y))(\pr_2\circ\psi_i(y))\big).$$  Since $\xi_j$ has support contained in $W_j$, $\widetilde{F}$ is well-defined.

To show that $\widetilde{F}$ is smooth, fix a plot $p\colon U\to E$.  It is enough to show that $\widetilde{F}\circ p$ locally lifts to a plot of $S_G$.  Fix $u\in U$.  Since $\{W_j\}$ is a locally finite open cover, there is an open neighbourhood $B$ of $p(u)$ such that $B$ intersects only finitely many of the open sets $\pi^{-1}(W_j)$; without loss of generality, assume that these are $W_1,\dots,W_k$.  Next, of these, only $l\leq k$ contain $p(u)$.  Again, without loss of generality, assume that these are $W_1,\dots,W_l$.  Since $\{\xi_j\}$ is subordinate to $\{W_j\}$, we have that the closed set $\bigcup_{j=1}^{k-l}\supp(\xi_{l+j})$ is disjoint from $p(u)$, which itself is closed since $X$ is Hausdorff.  Furthermore, since $X$ is also paracompact, it is normal, and so shrink $B$ so that it only intersects $W_{l+1},\dots,W_k$ outside of each $\supp(\xi_j)$ ($j=l+1,\dots,k$).   

Let $p_B:=p|_{p^{-1}(B)}$ and let $\rho\colon S_G\to EG$ be the quotient map.  Then, $\widetilde{F}\circ p_B=\rho\circ\sigma_B$ where $\sigma_B\colon p^{-1}(B)\to S_G$ is defined to be the smooth map $\sigma_B(u)=(t_i(u),g_i(u))$ where $g_i(u)=e$ if $i>l$, $g_i(u)=\pr_2(\psi_i(p_B(u)))$ if $i=1,\dots,l$, and $t_i(u)=\xi_i(\pi(p_B(u)))$ for each $i$.  It follows that $\widetilde{F}$ is a smooth map into $EG$.  It is also $G$-equivariant, and so descends to a smooth map $F\colon X\to BG$.

To show that $E$ and $F^*EG$ are isomorphic, define a map $\Phi\colon E\to F^*EG$ by $\Phi(y):=(\pi(y),\widetilde{F}(y))$.  Then, $\Phi$ is a well-defined smooth bijection.  Fixing $x\in X$, let $B$ be an open neighbourhood of $x$ intersecting only finitely many $W_j$.  Then $\Phi^{-1}|_{\pi^{-1}(B)}(x,(t_ig_i))=\psi_j^{-1}(x,g_j)$ for any $j$ such that $x\in W_j$, which is smooth, and hence $\Phi$ is a diffeomorphism.  $G$-equivariance of $\Phi$ follows from the $G$-equivariance of $\widetilde{F}$.
\end{proof}

\begin{remark}\labell{r:classifying maps}
If $\pi\colon E\to X$ is a principal $G$-bundle that has a smooth classifying map $F\colon X\to BG$, then it follows that $F^*EG$ is weakly D-numerable, and so by definition of a classifying map, $\pi\colon E\to X$ is also weakly D-numerable.
\end{remark}

To show that our inverse correspondence $\mathcal{B}_G(X)\to[X,BG]$ is well-defined, we need to show that isomorphic bundles yield smoothly homotopic classifying maps.  This is a consequence of the following proposition.

\begin{proposition}[Homotopy Equivalence of Maps to EG]\labell{p:uniqueness}
Let $G$ be a diffeological group, $E\to X$ a principal $G$-bundle, and $f,h\colon E\to EG$ two $G$-equivariant smooth maps.  Then, $f$ and $h$ are smoothly $G$-equivariantly homotopic, and so descend to smoothly homotopic maps $X\to BG$.
\end{proposition}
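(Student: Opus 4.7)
The plan is to factor a smooth $G$-equivariant homotopy from $f$ to $h$ as a concatenation of three sub-homotopies: $f \simeq e_0 \circ f \simeq e_1 \circ h \simeq h$. Here $e_0, e_1 \colon EG \to EG$ are the smooth $G$-equivariant ``odd-shift'' and ``even-shift'' maps placing the original coefficients at odd (respectively even) positions, explicitly
$$e_0(t_i g_i) := (t_1 g_1,\ 0\cdot e,\ t_2 g_2,\ 0\cdot e,\ \ldots), \qquad e_1(t_i g_i) := (0\cdot e,\ t_1 g_1,\ 0\cdot e,\ t_2 g_2,\ \ldots).$$
Smoothness of $e_0, e_1$ is immediate (each admits an obvious lift to $S_G$), and $G$-equivariance follows since the $G$-action is simultaneous right multiplication on all $g_i$ and commutes with reindexing of positions.

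The middle sub-homotopy $e_0 \circ f \simeq e_1 \circ h$ is given by an explicit interleaving formula: writing $f(y) = (t_i^f g_i^f)$ and $h(y) = (t_i^h g_i^h)$, define
$$H_s(y) := \bigl((1-s) t_1^f g_1^f,\ s\, t_1^h g_1^h,\ (1-s) t_2^f g_2^f,\ s\, t_2^h g_2^h,\ \ldots\bigr).$$
The weights sum to $(1-s) \sum t_i^f + s \sum t_i^h = (1-s) + s = 1$, so $H_s$ lies in $EG$. At $s = 0$ this equals $e_0 \circ f$, and at $s = 1$ it equals $e_1 \circ h$. Smoothness follows by piecing together local lifts of $f$ and $h$ to $S_G$ (using Proposition~\ref{p:EG action smooth}-style arguments), and $G$-equivariance is inherited from that of $f$ and $h$.

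The outer sub-homotopies $f \simeq e_0 \circ f$ and $h \simeq e_1 \circ h$ exploit the fact that $EG$ is an \emph{infinite} join: there are always unused positions available for temporary storage when sliding coefficients around. Concretely, construct a countable telescope of moves in which the $k$-th move slides the coefficient at position $k$ into position $2k-1$, preserving its $g$-value and using a free position as intermediate storage. Smooth bump functions supported on shrinking intervals like $[1 - 2^{-(k-1)},\ 1 - 2^{-k}]$ concatenate this countable sequence into a single smooth homotopy on $[0,1]$. Each individual move is a reindexing that commutes with the $G$-action, so the concatenation is $G$-equivariant.

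The main obstacle is verifying that the telescoped homotopy in the third step is smooth as a map $[0,1] \times E \to EG$ of diffeological spaces, not just pointwise in $y$. The key diffeological fact to invoke is that plots into $EG$ need only lift locally to $S_G$: at positions where a weight vanishes momentarily, the corresponding $g$-value in the lift is unconstrained and may be chosen (say, equal to $e$) to make the lift smooth. Chaining the three smooth $G$-equivariant sub-homotopies produces the desired homotopy $f \simeq h$, which descends via the smooth quotient $EG \to BG$ to a smooth homotopy between the induced maps $X \to BG$.
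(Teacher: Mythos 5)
Your proposal is correct and follows essentially the same route as the paper: the same three-stage decomposition $f\simeq e_0\circ f\simeq e_1\circ h\simeq h$, the same interleaving formula for the middle stage, and the same countable telescope of weight-shifting moves (concatenated via bump functions and a reparametrisation of $[0,1)$ onto $[0,\infty)$) for the outer stages. You also correctly identify the one delicate point — smoothness of the telescoped homotopy at the accumulation parameter, resolved by choosing the unconstrained group entries in local lifts to $S_G$ — which is exactly where the paper's argument concentrates its care.
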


\begin{proof}
Denote the images of $f$ and $h$ by $$f(y)=\big(s_1(y)f_1(y),s_2(y)f_2(y),\dots\big) \text{ and } h(y)=\big(t_1(y)h_1(y),t_2(y)h_2(y),\dots\big)$$ where $f_i(y),h_i(y)\in G$ and $s_i(y),t_i(y)\in[0,1]$.  We will construct an homotopy $H$ such that $H(y,0)=f(y)$ and $H(y,1)=h(y)$ for all $y\in E$.  The construction will be a concatenation of homotopies that we construct now.  Throughout these, we use a smooth function $b\colon\RR\to[0,1]$ such that $b(\tau)=0$ for all $\tau\leq \eps$, $b(\tau)=1$ for all $\tau\geq 1-\eps$, and $\frac{db}{d\tau}\geq 0$, for some fixed $\eps\in(0,1/2)$.

Let $F_1\colon E\times[0,1]\to EG$ be defined by 
\begin{align*}
F_1(y,\tau):=&~\big(s_1(y)f_1(y),b(\tau)s_2(y)f_2(y),(1-b(\tau))s_2(y)f_2(y),\\
&~b(\tau)s_3(y)f_3(y),(1-b(\tau))s_3(y)f_3(y),\dots\big).
\end{align*}
Then $F_1$ is smooth, and satisfies
\begin{gather*}
F_1(y,0)=\big(s_1(y)f_1(y),0,s_2(y)f_2(y),0,\dots\big) \text{ and }\\
F_1(y,1)=\big(s_1(y)f_1(y),s_2(y)f_2(y),0,s_3(y)f_3(y),0,\dots\big).
\end{gather*}
Similarly, for each $n>1$, let $F_n\colon E\times[0,1]\to EG$ be defined by 
\begin{align*}
F_n(y,\tau):=&~\big(s_1(y)f_1(y),\dots,s_n(y)f_n(y),b(\tau)s_{n+1}(y)f_{n+1}(y),\\
&~(1-b(\tau))s_{n+1}(y)f_{n+1}(y),b(\tau)s_{n+2}(y)f_{n+2}(y),\dots\big).
\end{align*}
Each $F_n$ is smooth, and $F_n(y,0)=F_{n-1}(y,1)$ for all $n>1$.

Define $F\colon E\times[0,\infty)\to EG$ by $F(y,\tau)=F_{\lfloor\tau+1\rfloor}(y,\tau-\lfloor\tau\rfloor),$ where $\lfloor\cdot\rfloor$ is the floor function.  Since each $F_n$ is constant near $\tau=0$ and $\tau=1$ for fixed $y$, it follows that $F$ is smooth.  Let $c\colon[0,1)\to[0,\infty)$ be an increasing diffeomorphism ($x\mapsto\tan(\frac{\pi}{2}x)$, say), and define $F'\colon E\times[0,1)\to EG$ as the smooth composition $F\circ(\id_E\times c)$.  Define $F'(y,1):=f(y)$.  This yields a smooth extension $F'\colon E\times[0,1]\to EG$.

Similarly, for each $n>0$, define smooth maps $S_n\colon E\times[0,1]\to EG$ by 
\begin{align*}
S_n(y,\tau):=&~\big(t_1(y)h_1(y),\dots,t_{n-1}(y)h_{n-1}(y),b(\tau)t_n(y)h_n(y),\\
&~(1-b(\tau))t_n(y)h_n(y),b(\tau)t_{n+1}(y)h_{n+1}(y),\dots\big).
\end{align*}
We have $S_n(y,0)=S_{n-1}(y,1)$ for each $n>1$.  Defining $S$ and $S'$ similar to $F$ and $F'$, by setting $S'(y,1)=h(y)$ we obtain a smooth extension $S'\colon E\times[0,1]\to EG$.

Finally, define $T\colon E\times[0,1]\to EG$ by 
\begin{align*}
T(y,\tau):=&~\big((1-b(\tau))s_1(y)f_1(y),b(\tau)t_1(y)h_1(y),\\
&~(1-b(\tau))s_2(y)f_2(y),b(\tau)t_2(y)h_2(y),\dots\big).
\end{align*}
Then $T$ is smooth, $T(y,0)=F'(y,0)$, and $T(y,1)=S'(y,0)$.

We now define the smooth homotopy $H\colon E\times[0,1]\to EG$ between $f$ and $h$ by
$$H(y,\tau):=\begin{cases}
F'(y,1-b(3\tau)) & \text{for $\tau\in[0,\frac{1}{3}]$,}\\
T(y,b(3\tau-1)) & \text{for $\tau\in[\frac{1}{3},\frac{2}{3}]$,}\\
S'(y,b(3\tau-2)) & \text{for $\tau\in[\frac{2}{3},1]$.}\\
\end{cases}$$
The $G$-equivariance of $H$ is clear.  This completes the proof.
\end{proof}

We are now ready to prove the main result of this section.

\begin{proof}[Proof of Theorem~\ref{t:BG}]
The fact that $\mathcal{B}_G(\cdot)$ and $[\cdot,BG]$ are functors is clear. Fix a diffeological space from $\mathbf{Diffeol}_{\operatorname{HSP}}$.  It follows from Proposition~\ref{p:EG classifying} and Proposition~\ref{p:homotopy} that there is a map $\alpha$ from the set $[X,BG]$ to $\mathcal{B}_G(X)$.   It follows from Proposition~\ref{p:classifying maps} and Proposition~\ref{p:uniqueness} that $\alpha$ has an inverse, and hence $\mathcal{B}_G(X)$ is in bijection with $[X,BG]$.  Finally, this bijection is natural: given two diffeological spaces $X$ and $Y$ from $\mathbf{Diffeol}_{\operatorname{HSP}}$ and a smooth map $\varphi\colon X\to Y$, we have that $\mathcal{B}(\varphi)$ and $[\varphi,BG]$ commute with these bijections.
\end{proof}

%%%%%%%%%%%%%%%%%%%%%%%%%%%%%%%%%%%%%%%%%%%%%%%%%%%%%%
\section{Diffeological Connections and Connection $1$-Forms}\labell{s:connection}
%%%%%%%%%%%%%%%%%%%%%%%%%%%%%%%%%%%%%%%%%%%%%%%%%%%%%%

In \cite[Article 8.32]{iglesias} Iglesias-Zemmour gives a definition of a connection on a principal $G$-bundle in terms of paths on the total space, generalising the classical notion for principal bundles with structure group a finite-dimensional Lie group.  From this definition one obtains the usual properties that one expects from a connection (see Remark~\ref{r:iz-connection}).  The purpose of this section is to prove Theorem~\ref{t:iz-connection}: that any principal $G$-bundle satisfying mild conditions admits one of these connections provided $G$ is a regular diffeological Lie group.  We do this by way of constructing a connection $1$-form on the $G$-bundle $EG\to BG$, and showing that this induces a connection in the sense of Iglesias-Zemmour.  Since connections pull back, we obtain our result.

\begin{definition}[Diffeological Connections]\labell{d:iz-connection}
Let $G$ be a diffeological group, and let $\pi\colon E\to X$ be a principal $G$-bundle.  Denote by $\pathloc(E)$ the diffeological space of \textbf{local paths} $$\pathloc(E):=\{\gamma\in\CIN((a,b),E)\mid (a,b)\subseteq\RR\}$$ equipped with the subset diffeology induced by the standard functional diffeology \emph{on a diffeology} (see \cite[Article 1.63]{iglesias}).  We recall what this standard functional diffeology is: a parametrisation $p:U\to\mathcal{D}$ is a plot if and only if for each $u\in U$ and $v\in\dom(p(u))$ there exist open neighbourhoods $U'$ of $u$ and $V'$ of $v$ such that for each $u'\in U'$,
\begin{enumerate}
\item $V'\subseteq\dom(p(u'))$, and
\item\labell{i:diff of plots} the map $\Psi\colon U'\times V'\to X\colon(u',v')\mapsto p(u')(v')$ is in $\mathcal{D}$.
\end{enumerate}

Denote by $\tbpath(E)$ the \textbf{tautological bundle of local paths}, equipped with the subset diffeology induced by $\pathloc(E)\times\RR$: $$\tbpath(E):=\{(\gamma,t)\in\pathloc(E)\times\RR\mid t\in\dom(\gamma)\}.$$ A \textbf{diffeological connection} is a smooth map $\theta\colon\!\tbpath(E)\to\pathloc(E)$ satisfying the following properties for any $(\gamma,t_0)\in\tbpath(E)$:
\begin{enumerate}
\item the domain of $\gamma$ equals the domain of $\theta(\gamma,t_0)$,
\item $\pi\circ\gamma=\pi\circ\theta(\gamma,t_0)$,
\item $\theta(\gamma,t_0)(t_0)=\gamma(t_0)$,
\item $\theta(g\cdot\gamma,t_0)=g\cdot\theta(\gamma,t_0)$ for all $g\in G$,
\item $\theta(\gamma\circ f,s)=\theta(\gamma,f(s))\circ f$ for any smooth map $f$ from an open subset of $\RR$ into $\dom(\gamma)$,
\item $\theta(\theta(\gamma,t_0),t_0)=\theta(\gamma,t_0)$. 
\end{enumerate}
\end{definition}

\begin{remark}\labell{r:iz-connection}
Diffeological connections satisfy many of the usual properties that classical connections on a principal $G$-bundle (where $G$ is a finite-dimensional Lie group) enjoy; in particular, they admit unique horizontal lifts of paths into the base of a principal bundle \cite[Article 8.32]{iglesias}, and they pull back by smooth maps \cite[Article 8.33]{iglesias}.
\end{remark}

We now state the main purpose of this section.

\begin{theorem}[Diffeological Connections on Principal $G$-Bundles]\labell{t:iz-connection}
Let $G$ be a regular diffeological Lie group, and let $X$ be a Hausdorff smoothly paracompact diffeological space.  Then any weakly D-numerable principal $G$-bundle $E\to X$ admits a diffeological connection.  Consequently, for this diffeological connection, any smooth curve into $X$ has a unique horizontal lift to $E$.
\end{theorem}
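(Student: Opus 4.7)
The plan is to carry out the strategy signaled in the introduction to Section~\ref{s:connection}: first build a universal diffeological connection on $EG \to BG$, then pull it back through a classifying map. The final claim about horizontal lifts will then be immediate from Remark~\ref{r:iz-connection}.

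\textbf{Stage 1 (universal connection 1-form on $EG$).} Using the smooth functions $s_j\colon EG\to[0,1]$ and the equivariant local trivialisations $\varphi_j\colon V_j\to G\times U_j$ produced in the proof of Proposition~\ref{p:EG principal}, I would assemble a $\g$-valued 1-form $\omega$ on $EG$ by a weighted average of Maurer--Cartan forms. Informally, $\omega_{(t_ig_i)} = \sum_j s_j\,(\pi_j^{\,*}\theta_G)$, where $\theta_G$ is the left Maurer--Cartan form on $G$ and $\pi_j$ reads off the $j$-th $G$-coordinate on $V_j$. The sum is pointwise finite (by the join construction), and each summand extends smoothly by $0$ off $V_j$ because $s_j$ vanishes there; the product diffeology makes this legitimate. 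One then verifies, using $h\cdot(t_ig_i)=(t_ig_ih^{-1})$, that $\omega$ is $G$-equivariant in the appropriate sense and restricts to the Maurer--Cartan form along the fibres, so $\omega$ is a bona fide connection 1-form.

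\textbf{Stage 2 (from 1-form to diffeological connection on $EG\to BG$).} Given $(\gamma,t_0)\in\tbpath(EG)$, I define $\theta(\gamma,t_0)(t):=\gamma(t)\cdot h(t)^{-1}$, where $h\colon\dom(\gamma)\to G$ is the unique solution of the ODE obtained from the horizontality condition $\omega(\dot{(\gamma\cdot h^{-1})})=0$; in Maurer--Cartan form this reads $(R_{h(t)^{-1}})_*\dot h(t) = \xi(t)$, where $\xi\in\CIN(\dom(\gamma),\g)$ comes from evaluating $\omega$ along $\gamma$. Precisely here I invoke the regularity of $G$ (Definition~\ref{d:diffeol lie gp}), which produces a smooth solution $h$ depending smoothly on the smooth datum $\xi$, hence on the plot data for $(\gamma,t_0)$. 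The axioms (1)--(6) of Definition~\ref{d:iz-connection} follow: the domain, projection, and initial conditions are built in; reparametrisation invariance (axiom 5) is the chain rule applied to the ODE; equivariance (axiom 4) is the $G$-equivariance of $\omega$; and idempotence (axiom 6) is uniqueness of the ODE solution, since along a horizontal lift $\xi$ vanishes identically.

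\textbf{Stage 3 (transfer to $E\to X$).} Because $X$ is Hausdorff and smoothly paracompact, Remark~\ref{r:d-numerable} upgrades the weakly D-numerable bundle $E\to X$ to a D-numerable one. Proposition~\ref{p:classifying maps} then supplies a smooth classifying map $F\colon X\to BG$ with $E\cong F^{*}EG$ as principal $G$-bundles. Since diffeological connections pull back by smooth maps (\cite[Article~8.33]{iglesias}), the connection constructed in Stage~2 pulls back to a diffeological connection on $E\to X$. The horizontal lift statement is then Remark~\ref{r:iz-connection}: any diffeological connection admits a unique horizontal lift of each smooth curve in the base.

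\textbf{Main obstacle.} Stage~1 is the delicate step: one must justify that the pointwise-finite sum defining $\omega$ really produces a smooth differential form on $EG$ in whatever diffeological framework for forms the paper adopts, and that the ``undefined'' coordinates $g_j$ on the set $\{s_j=0\}$ genuinely do not obstruct smoothness of the extensions-by-zero. Stage~2 is conceptually routine but depends crucially on regularity of $G$ to integrate the $\g$-valued curve $\xi$ to a path in $G$ on the whole of $\dom(\gamma)$; without regularity the lift need not exist even locally, which is why the hypothesis cannot be dropped.
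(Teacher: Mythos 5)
Your overall architecture is the paper's: a universal connection $1$-form on $EG\to BG$, regularity of $G$ to integrate the horizontality ODE, and a pullback along a classifying map. (The paper pulls back the $1$-form via the equivariant map $\widetilde{F}\colon E\to EG$ of Proposition~\ref{p:classifying maps} and only then integrates — Corollary~\ref{c:universal connection} followed by Proposition~\ref{p:iz-connection} — whereas you integrate on $EG$ first and pull back the resulting diffeological connection; both orders work, since connection $1$-forms and diffeological connections each pull back.) Your Stages 2 and 3 match the paper essentially verbatim, including the role of regularity and the reduction of axiom (6) of Definition~\ref{d:iz-connection} to uniqueness of the ODE solution.

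The one step you have not closed is the one you flag yourself in Stage 1: the smoothness of the extension by zero of $s_j\,\pi_j^{*}\theta_G$ off $V_j$ does not follow from ``the product diffeology.'' The coordinate read-off $\pi_j$ is simply undefined where $s_j=0$, so there is nothing to multiply by zero there, and one needs an argument that the pullback of your form along an arbitrary plot of $EG$ is smooth across the boundary of the preimage of $V_j$. The paper's device (Theorem~\ref{t:universal connection}) is to define the sum one level up, on $S_G\subseteq[0,1]^\NN\times G^\NN$, where the projections $\pr_{g_j}$ are globally defined and smooth, so that $\widetilde{\omega}=\sum_j t_j\,\pr_{g_j}^{*}\alpha$ is a manifestly smooth, pointwise-finite sum; one then verifies the descent criterion of \cite[Article 6.38]{iglesias}, namely that $p_1^{*}\widetilde{\omega}=p_2^{*}\widetilde{\omega}$ for any two plots $p_1,p_2$ of $S_G$ with $\rho\circ p_1=\rho\circ p_2$, which uses exactly that where $t_j\neq 0$ the $j$-th group coordinates of the two lifts agree, while elsewhere the $j$-th summand carries coefficient $t_j=0$. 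Your local-trivialisation description of $\omega$ is a correct account of what the form is, but the only proof of its smoothness runs through this lift: a plot of $EG$ locally lifts to $S_G$, and it is the lift that supplies the globally smooth, locally bounded $\pr_{g_j}$ needed to see that $s_j\,\pi_j^{*}\theta_G$ tends to $0$ as $s_j\to 0$. So: same approach, but the ``main obstacle'' you name is genuinely the content of Theorem~\ref{t:universal connection}, and it is resolved by the $S_G$ argument rather than by an appeal to the product diffeology.
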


To prove this, we begin by constructing a connection 1-form on $EG\to BG$.

\begin{definition}[Connection 1-Form]\labell{d:conn 1-form}
Let $G$ be a diffeological group, and let $E\to X$ be a principal $G$-bundle.  A \textbf{connection 1-form} on $E$ is a $G$-equivariant smooth fibrewise linear map $\omega\colon TE\to T_eG$ (with respect to the adjoint action on $T_{e}G$) such that for any $y\in E$ and $\xi\in T_eG$, we have $\omega(\xi_E|_y)=\xi$ where $$\xi_E|_y:=\frac{d}{dt}\Big|_{t=0}(g(t)\cdot y)$$ in which $g(t)$ is a smooth curve in $G$ such that $g(0)=e$ and $\dot{g}(0)=\xi$.
\end{definition}

To make clear the connection between ordinary diffeological differential $1$-forms and smooth fibrewise linear maps as defined above, we present the following definition and proposition.

\begin{definition}\labell{d:V-valued forms}
Denote by $\DVB$ the category of diffeological vector pseudo-bundles with smooth fibrewise linear maps between them.  For a diffeological space $(X,\mathcal{D})$ denote by $\operatorname{Plots}(\mathcal{D})$ the category with objects plots in $\mathcal{D}$ and arrows commutative triangles 
$$\xymatrix{
U \ar[rr]^{f} \ar[dr]_{p} & & V \ar[dl]^{q} \\
 & X & \\
}$$
in which $f$ is smooth.  Let $V$ be a diffeological vector space.  Denote $\Omega^1(X;V)$ to be the \textbf{$V$-valued differential $1$-forms on $X$}, defined to be the limit $$\underset{\operatorname{Plots}(\mathcal{D})}{\lim}\Hom_{\DVB}(T\circ F(\cdot),X\times V\to X).$$  Here, $X\times V\to X$ is the trivial $V$-bundle over $X$, $T$ is the tangent functor, and $F$ the forgetful functor sending plots to their domains and commutative triangles to the corresponding maps between Euclidean open sets.
\end{definition}

\begin{proposition}\labell{p:V-valued forms}
Let $(X,\mathcal{D})$ be a diffeological space and $V$ a diffeological vector space.  Then there is a natural identification between smooth fibrewise $\RR$-linear maps $TX\to V$ and $V$-valued $1$-forms $\Omega^1(X;V)$.\footnote{Proof suggested by Daniel Christensen via private communication.}
\end{proposition}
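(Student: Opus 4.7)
The plan is to construct mutually inverse natural bijections $\Phi$ and $\Psi$ between the set of smooth fibrewise $\RR$-linear maps $TX \to V$ and the set $\Omega^1(X;V)$. In the forward direction, given $\omega\colon TX \to V$, I would define $\Phi(\omega)$ by setting its component at a plot $p\colon U \to X$ to be the composite $\omega_p := \omega \circ Tp\colon TU \to V$ (paired with $p \circ \pi_U$ to land in $X \times V$ as the definition of $\Omega^1(X;V)$ requires). Smoothness and fibrewise $\RR$-linearity of each $\omega_p$ follow from the corresponding properties of $\omega$ together with functoriality of $T$, and the compatibility across arrows $f\colon U \to U'$ in $\operatorname{Plots}(\mathcal{D})$ is a direct consequence of the chain rule in Remark~\ref{r:tangent bdle}: if $p' \circ f = p$, then $Tp' \circ Tf = Tp$, hence $\omega_{p'} \circ Tf = \omega_p$.

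In the reverse direction, given a compatible family $(\omega_p)_p \in \Omega^1(X;V)$, I would construct $\omega := \Psi((\omega_p))$ pointwise using the colimit description of tangent spaces from Definition~\ref{d:int tang sp}. For $v \in T_xX$, choose a representative $(p, w)$ with $(p\colon U \to X) \in \mathbf{Plots}_0(\mathcal{D}, x)$ and $w \in T_0U$, and set $\omega(v) := \omega_p(w)$. Well-definedness follows because any two representatives of $v$ are related by a zigzag of arrows $f\colon U \to U'$ in $\mathbf{Plots}_0(\mathcal{D}, x)$ along each of which the hypothesis $\omega_{p'} \circ Tf|_0 = \omega_p|_{T_0U}$ holds. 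Fibrewise $\RR$-linearity of $\omega$ is then inherited from the $\RR$-linearity of each $\omega_p|_{T_0U}$ combined with the $\RR$-linearity of the colimit maps $T_0U \to T_xX$.

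The principal obstacle is verifying that $\omega\colon TX \to V$ is smooth with respect to $T\mathcal{D}$, and my plan is to exploit the universal property of this diffeology. Since $T\mathcal{D}$ is characterised (via Remark~\ref{r:pseudobundle} and Definition~\ref{d:tangent bdle}) as the smallest diffeology making $TX \to X$ a vector pseudo-bundle for which each $Tp\colon TU \to TX$ is smooth, it suffices to check smoothness of $\omega$ on these generating data. On plots of the form $Tp$, the composite $\omega \circ Tp$ equals $\omega_p$ by construction, hence is smooth; and the fibrewise $\RR$-linearity of $\omega$ ensures that it commutes with fibrewise addition, scalar multiplication, and the zero section, so smoothness extends across the pseudo-bundle closure operations and therefore to every plot of $T\mathcal{D}$. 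Finally, $\Psi \circ \Phi$ recovers the original $\omega$ because every vector in $T_xX$ is of the form $Tp(w)$ for a plot $p$ centred at $x$, while $\Phi \circ \Psi$ returns $(\omega_p)$ by the very definition of $\omega$ on such vectors; naturality in $X$ and in $V$ follows from the functoriality of $T$ and of the limit defining $\Omega^1(X;V)$.
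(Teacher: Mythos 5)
Your proposal is a hands-on unpacking of what the paper does in two lines: the paper observes that $TX=\colim(T\circ F)$ \emph{in the category $\DVB$} (citing \cite[Theorem 4.17]{CW}) and then invokes the general fact that $\Hom$ out of a colimit is the limit of the $\Hom$'s, which is literally the definition of $\Omega^1(X;V)$. Your $\Phi$ is the canonical comparison map and your $\Psi$ is its inverse supplied by the universal property, so the two arguments are the same identification; the difference is that you try to verify the universal property by hand rather than cite it.

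Two steps in that verification are not right as written. First, you define $\Psi((\omega_p))(v)$ by ``choosing a representative $(p,w)$'' of $v\in T_xX$ and later assert that every vector in $T_xX$ is of the form $Tp(w)$ for a single plot $p$. That is false in general: $T_xX$ is a colimit of vector spaces over a non-filtered category, so a tangent vector is a finite \emph{sum} of classes $[p,w]$ and need not be a single germ --- the paper itself makes this point in Remark~\ref{r:reg diffeol lie gp}. Consequently the zigzag argument for well-definedness only treats generators; for the general element you must define $\Psi((\omega_p))$ on the direct sum $\bigoplus_p T_0U_p$ and check it kills the relations defining the colimit, i.e.\ use the universal property of the colimit of vector spaces. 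This is repairable but is not what you wrote, and your verification that $\Psi\circ\Phi=\id$ rests on the same false premise (it should instead be argued by linearity: two linear maps agreeing on the spanning set $\{Tp(w)\}$ agree everywhere). Second, the smoothness of $\Psi((\omega_p))$ is exactly the non-trivial point: you assert that ``smoothness extends across the pseudo-bundle closure operations,'' but the diffeology $T\mathcal{D}$ is the smallest one containing the $Tp$ and closed under the pseudo-bundle operations, and one needs an explicit description of the plots so generated (locally, fibrewise sums and scalar multiples of the generating plots) to push smoothness through. That description is precisely the content of the statement that $TX$ is the colimit in $\DVB$, which is \cite[Theorem 4.17]{CW}; without citing it or proving it, your argument has a gap at its central step. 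With that citation in hand, the whole proposition collapses to the paper's one-line computation $\Hom_{\DVB}(\colim(T\circ F),\,X\times V)\cong\lim\Hom_{\DVB}(T\circ F(\cdot),\,X\times V)$.
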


\begin{proof}
The set of smooth fibrewise $\RR$-linear maps $TX\to V$ is exactly $$\Hom_{\DVB}(TX,X\times V\to X).$$  But $TX$ is the colimit $\colim(T\circ F)$ in $\DVB$ over the category $\operatorname{Plots}(\mathcal{D})$ where $T$ and $F$ are as in Definition~\ref{d:V-valued forms}; see \cite[Theorem 4.17]{CW}.  But then,
$$\Hom_{\DVB}(TX,X\times V\to X)=\underset{\operatorname{Plots}(\mathcal{D})}{\lim}\Hom_{\DVB}(T\circ F(\cdot),X\times V\to X);$$
see, for example, \cite[Corollary 5.29]{awodey}.
\end{proof}

\begin{remark}\labell{r:V-valued forms}
Ordinary differential $1$-forms on a diffeological space $X$ are, by definition, equal to $$\underset{\operatorname{Plots}(\mathcal{D})}{\lim}\Hom_{\DVB}(T\circ F(\cdot),X\times \RR\to X)$$
\end{remark}

Returning to connection $1$-forms, the most basic connection $1$-form is the Maurer-Cartan form on a diffeological group.

\begin{lemma}[Maurer-Cartan Form]\labell{l:maurer-cartan}
Let $G$ be a diffeological group.  Then $G$ has a \textbf{Maurer-Cartan form} $\alpha$; that is, a smooth fibrewise-linear map $\alpha\colon TG\to T_eG$ sending $v\in T_gG$ to $(L_{g^{-1}})_*v$.  It is $G$-equivariant with respect to the adjoint action on $T_eG$ and the left action on $TG$ defined by $h\cdot v:= (R_{h^{-1}})_*v$.
\end{lemma}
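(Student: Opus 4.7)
The plan is to reduce everything to Theorem~\ref{t:group} (the trivialisation $TG\cong G\times T_{e}G$) and to the functoriality of $T$. The map $\alpha$ is essentially the second-coordinate projection of that trivialisation, so well-definedness, smoothness, and fibrewise linearity will come for free; the only genuine piece of content is the equivariance computation, and even that reduces to a one-line identity between two maps $G\to G$.

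First I would observe that by Theorem~\ref{t:group}, the map $\Phi\colon TG\to G\times T_{e}G$ sending $v\in T_gG$ to $(g,(L_{g^{-1}})_*v)$ is a diffeomorphism and vector pseudo-bundle isomorphism. In particular, for any $v\in T_gG$ the vector $(L_{g^{-1}})_*v$ genuinely lives in $T_{e}G$ (since $L_{g^{-1}}(g)=e$), so $\alpha:=\operatorname{pr}_2\circ\Phi$ is a well-defined map $TG\to T_{e}G$. Smoothness of $\alpha$ follows by composing two smooth maps, and fibrewise $\RR$-linearity follows from $\Phi$ being a vector pseudo-bundle isomorphism (so linear on each fibre) together with linearity of the projection $\{g\}\times T_{e}G\to T_{e}G$.

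Next I would verify the equivariance. Fix $h\in G$ and $v\in T_gG$; then $h\cdot v=(R_{h^{-1}})_*v\in T_{gh^{-1}}G$, so
\begin{equation*}
\alpha(h\cdot v)=(L_{hg^{-1}})_*(R_{h^{-1}})_*v=(L_{hg^{-1}}\circ R_{h^{-1}})_*v,
\end{equation*}
using the chain rule from Remark~\ref{r:tangent bdle}. On the other hand, by Definition~\ref{d:adjoint action},
\begin{equation*}
h\cdot\alpha(v)=(C_h)_*(L_{g^{-1}})_*v=(C_h\circ L_{g^{-1}})_*v.
\end{equation*}
Now for any $x\in G$ one has $(L_{hg^{-1}}\circ R_{h^{-1}})(x)=hg^{-1}xh^{-1}=(C_h\circ L_{g^{-1}})(x)$, so the two maps $G\to G$ agree; applying $T$ gives $\alpha(h\cdot v)=h\cdot\alpha(v)$.

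There is no serious obstacle: the assertion is essentially a repackaging of Theorem~\ref{t:group} plus a matrix-like identity in $G$. The only subtlety worth flagging is to confirm that $h\cdot v:=(R_{h^{-1}})_*v$ really does define a smooth left $G$-action on $TG$, which follows from smoothness of $R_{h^{-1}}$ together with functoriality of $T$ (so that $(R_{(hk)^{-1}})_*=(R_{k^{-1}})_*\circ(R_{h^{-1}})_*$) and smoothness of the combined map $G\times TG\to TG$, which can be checked by factoring through the smooth multiplication and inversion maps of $G$ and using that $T$ respects products.
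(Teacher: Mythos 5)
Your argument is correct and is essentially the paper's own proof written out in full: the paper just says the lemma is immediate from the trivialisation $TG\cong G\times T_eG$ of Theorem~\ref{t:group}, and you have supplied exactly the missing details (taking $\alpha$ to be the second projection of that isomorphism, and deducing equivariance from the pointwise identity $L_{hg^{-1}}\circ R_{h^{-1}}=C_h\circ L_{g^{-1}}$ together with functoriality of $T$). One tiny slip in your closing remark: since $R_a\circ R_b=R_{ba}$, the left-action axiom reads $(R_{(hk)^{-1}})_*=(R_{h^{-1}})_*\circ(R_{k^{-1}})_*$, i.e.\ the composition should be in the opposite order from the one you wrote (your version gives $(R_{(kh)^{-1}})_*$); the conclusion is unaffected.
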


\begin{proof}
This is immediate from the trivialisation of $TG$ given in Theorem~\ref{t:group}.
\end{proof}

Since $EG$ is constructed out of an infinite product of the group $G$, we can take the infinite sum of the Maurer-Cartan form, with coefficients $t_j$; since only finitely many of the $t_j$ are non-zero, the sum converges.  The goal is to show that the result is smooth.

\begin{theorem}[Connection on $EG$]\labell{t:universal connection}
Let $G$ be a diffeological group.  Then the principal $G$-bundle $EG\to BG$ admits a connection 1-form $\omega$.
\end{theorem}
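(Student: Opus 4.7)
The plan is to construct $\omega$ as a weighted average of pullbacks of the Maurer--Cartan form, one for each $G$-factor of the join $EG$. Recall from Proposition~\ref{p:EG principal} the open cover $\{V_j\}_{j \in \NN}$ of $EG$ with $V_j = \{(t_ig_i) : t_j > 0\}$, the smooth $G$-invariant weight functions $s_j(t_ig_i) := t_j$ satisfying $\sum_j s_j \equiv 1$, and the smooth maps $f_j\colon V_j \to G$, $f_j(t_ig_i) := g_j$ (using the canonical representative with $g_j$ well-defined), which satisfy the equivariance $f_j(h \cdot y) = f_j(y) h^{-1}$. Let $\alpha\colon TG \to T_eG$ denote the Maurer--Cartan form of Lemma~\ref{l:maurer-cartan}, and define
$$\omega := -\sum_{j \in \NN} s_j \cdot f_j^*\alpha,$$
with each summand extended by zero outside $V_j$. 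The sum is pointwise finite because each $(t_ig_i) \in EG$ has $t_j \ne 0$ for only finitely many $j$.

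I would next check the defining axioms. Using the $G$-equivariance of $\alpha$ (Lemma~\ref{l:maurer-cartan}), the $G$-invariance of each $s_j$, and the relation $f_j(h \cdot y) = f_j(y)h^{-1}$, each summand is $G$-equivariant in the sense $\omega(h \cdot v) = \Ad(h)\omega(v)$. For the fundamental vector field identity, fix $\xi \in T_eG$ and $y = (t_ig_i)$; since the left action multiplies on the right by $g(t)^{-1}$, the $i$-th $G$-component of $\xi_{EG}|_y = \frac{d}{dt}\big|_{t=0}g(t)\cdot y$ is $(L_{g_i})_*(-\xi)$. Hence
$$\omega(\xi_{EG}|_y) = -\sum_i t_i\, \alpha\bigl((L_{g_i})_*(-\xi)\bigr) = -\sum_i t_i \cdot (-\xi) = \xi,$$
using $\sum_i t_i = 1$.

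The main obstacle is proving smoothness of $\omega$ as a map $TEG \to T_eG$. By Proposition~\ref{p:V-valued forms}, it suffices to specify, compatibly in plots, a smooth $T_eG$-valued 1-form $\omega_p$ on $U$ for every plot $p\colon U \to EG$. Given $u_0 \in U$, I would lift $p|_{U'}$ to a plot $q = (t_i^q, g_i^q)\colon U' \to S_G$ on a neighbourhood $U'$ of $u_0$ via the quotient diffeology, and set
$$\omega_p(u,w) := -\sum_{i \in \NN} t_i^q(u)\,(L_{g_i^q(u)^{-1}})_*\, dg_i^q|_u(w), \quad (u,w) \in TU'.$$
Independence of the lift follows because any two lifts agree on $g_i^q$ wherever $t_i^q > 0$ (by continuity of lifted plots), while indices with $t_i^q = 0$ contribute zero. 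The delicate step is proving smoothness of $\omega_p$ itself: the family $\{s_j\}$ is only pointwise finite and generally not locally finite, so $\omega_p$ is a genuinely infinite sum of smooth 1-forms. I expect to handle this by combining non-negativity of the $t_i^q$ with $\sum_i t_i^q \equiv 1$ to show that at each $u_0 \in U'$ the indices inactive at $u_0$ contribute terms vanishing to sufficient order that the sum, together with all its iterated derivatives, converges to a smooth $T_eG$-valued 1-form. Once smoothness and plot-morphism compatibility are established, Proposition~\ref{p:V-valued forms} reassembles the $\omega_p$ into the desired connection 1-form $\omega\colon TEG \to T_eG$.
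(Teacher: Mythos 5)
Your construction is essentially the paper's. The paper defines $\widetilde{\omega}|_{(g_i,t_i)}=\sum_i t_i\,\pr_{g_i}^*\alpha$ upstairs on $S_G$, where the coordinate projections are globally defined, and then shows it descends to $EG$ by checking that any two plots of $S_G$ covering the same plot of $EG$ pull $\widetilde{\omega}$ back to the same form --- which is exactly your ``independence of the lift'' observation, resting on the fact that two lifts can only differ in the $g_i$-coordinate where $t_i=0$, and there the coefficient kills the term. Working directly on $EG$ with the locally defined $f_j$ on $V_j$ and reassembling via Proposition~\ref{p:V-valued forms} is the same computation packaged differently. One point in your favour: your sign is the careful one. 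With the action $h\cdot(t_ig_i)=(t_ig_ih^{-1})$, the $i$-th component of $\xi_{EG}|_y$ is $(L_{g_i})_*(-\xi)$, so the unsigned sum as the paper writes it returns $-\xi$ on fundamental vectors; the minus sign (equivalently, a right-invariant Maurer--Cartan form) is what Definition~\ref{d:conn 1-form} actually requires.

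The one place where your argument is not complete is the step you yourself flag: smoothness of the pointwise-finite but not locally finite sum along an arbitrary plot. This is genuinely the crux, and your proposed fix --- that indices inactive at $u_0$ contribute terms ``vanishing to sufficient order'' --- does not obviously close it. For a plot $q$ of $S_G$, the supports of the functions $t_i^q$ with $t_i^q(u_0)=0$ can accumulate at $u_0$ while the factors $\alpha\bigl((L_{(g_i^q)^{-1}})_* dg_i^q\bigr)$ are unbounded in $i$ (the product diffeology imposes no uniformity across components), so the partial sums need not converge to anything continuous near $u_0$ without further input; non-negativity and $\sum_i t_i^q\equiv 1$ control the coefficients but not the forms they multiply. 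To be fair, the paper does not close this either: it simply asserts that $\widetilde{\omega}$ is a well-defined connection $1$-form on $S_G$ and spends all its effort on the descent. So your write-up is no less complete than the published proof on this point, but this is where real work would remain if you wanted an airtight argument.
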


\begin{proof}[Proof of Theorem~\ref{t:universal connection}]
Let $\alpha$ be the Maurer-Cartan form on $G$.  Define $\widetilde{\omega}$ to be the smooth map $\widetilde{\omega}\colon TS_G\to T_eG$ given by
$$\widetilde{\omega}|_{(g_i,t_i)} = \sum_{i \in \NN} t_i \pr_{g_i}^*\alpha$$
where $\pr_{g_i}\colon S_G\to G$ is the projection map onto the $i^\text{th}$ copy of $G$.  Then $\widetilde{\omega}$ is a well-defined connection 1-form on the bundle $S_G\to S_G/G=:EG$.

We need to show that $\widetilde{\omega}$ descends to a form $\omega$ on $EG$.  To do this, we use the following fact (see \cite[Article 6.38]{iglesias}, noting that the proof goes through with our definition of $1$-form, independent of the fact that the forms in the proof are real-valued): if $\rho\colon S_G\to EG$ is the quotient map, then a form $\mu$ on $S_G$ is equal to $\rho^*\nu$ for some form $\nu$ on $EG$ if and only if for any two plots $p_1,p_2:U\to S_G$ satisfying $\rho\circ p_1=\rho\circ p_2$, we have $p_1^*\mu=p_2^*\mu$.  Fix two such plots $p_1$ and $p_2$, and fix $u\in U$ and $v\in T_uU$.  It is enough for us to show
\begin{equation}\labell{e:universal connection}
v\hook (p_1^*\widetilde{\omega}-p_2^*\widetilde{\omega})=0.
\end{equation}

There are only finitely many $j\in\NN$ such that $\pr_{t_j}(p_1(u))\neq 0$ where $\pr_{t_j}$ is the $j^{\text{th}}$ projection from $S_G$ onto $[0,1]$.  Hence there are only finitely many open sets $$\widetilde{V}_j=\{(g_i,t_i)\mid t_j\neq 0\}\subseteq S_G$$ containing $p_1(u)$.  Thus, the intersection $\widetilde{V}$ of all such $\widetilde{V}_j$ is open, and its pre-image $W:=p_1^{-1}(\widetilde{V})\subseteq U$ is open.  Moreover, it follows from $\rho\circ p_1=\rho\circ p_2$ that $W=p_2^{-1}(\widetilde{V})$.  

Let $c\colon\RR\to U$ be a curve such that $c(0)=u$ and $\dot{c}(0)=v$.  Then for all $\tau$ in $c^{-1}(W)$ we have $\pr_{g_j}\circ p_1\circ c(\tau)=\pr_{g_j}\circ p_2\circ c(\tau)$ for all $j$ such that $t_j\neq 0$.  It follows that $$\alpha((\pr_{g_j})_*(p_1)_*v)=\alpha((\pr_{g_j})_*(p_2)_*v).$$  Equation~\eqref{e:universal connection} follows.
\end{proof}

We have the following corollary:

\begin{corollary}\labell{c:universal connection}
Let $G$ be a diffeological group.  Then any principal $G$-bundle $E\to X$ admitting a smooth classifying map has a connection 1-form.  In particular, if $E\to X$ is weakly D-numerable over a Hausdorff smoothly paracompact diffeological space $X$, then it admits a connection 1-form.
\end{corollary}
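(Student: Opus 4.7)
The plan is to reduce the corollary to Theorem~\ref{t:universal connection} by pulling back the connection 1-form $\omega$ on $EG\to BG$ along a classifying map. So suppose $\pi\colon E\to X$ is a principal $G$-bundle admitting a smooth classifying map $F\colon X\to BG$, together with a $G$-equivariant smooth lift $\widetilde{F}\colon E\to EG$ covering $F$ (provided by Proposition~\ref{p:classifying maps}, or more generally inherent in what ``classifying map'' means together with Proposition~\ref{p:EG classifying}). I would then define the candidate connection $1$-form on $E$ by
$$\omega_E := (T\widetilde{F})^*\omega = \omega\circ T\widetilde{F}\colon TE\to T_eG.$$

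The three defining conditions for a connection $1$-form all transfer along this pullback. First, $\omega_E$ is smooth and fibrewise linear as a composite of the smooth fibrewise linear maps $T\widetilde{F}\colon TE\to TEG$ (see Remark~\ref{r:tangent bdle}) and $\omega\colon TEG\to T_eG$. Second, $G$-equivariance of $\omega_E$ with respect to the adjoint action follows from the $G$-equivariance of $\widetilde{F}$ (and hence of $T\widetilde{F}$, again using functoriality from Remark~\ref{r:tangent bdle}) combined with the $G$-equivariance of $\omega$ given by Theorem~\ref{t:universal connection}. Third, for the vertical condition: fix $\xi\in T_eG$ and $y\in E$, and let $g(t)$ be a smooth curve in $G$ with $g(0)=e$ and $\dot{g}(0)=\xi$. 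Then $\xi_E|_y=\frac{d}{dt}|_{t=0}(g(t)\cdot y)$, and applying $T\widetilde{F}$ yields $\frac{d}{dt}|_{t=0}(g(t)\cdot \widetilde{F}(y))=\xi_{EG}|_{\widetilde{F}(y)}$ by equivariance of $\widetilde{F}$; applying $\omega$ then gives $\xi$, which is exactly what the connection $1$-form condition requires.

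For the second sentence of the statement, I would simply invoke Proposition~\ref{p:classifying maps}: a weakly D-numerable principal $G$-bundle $E\to X$ with $X$ Hausdorff and smoothly paracompact is in fact D-numerable (Remark~\ref{r:d-numerable}), and Proposition~\ref{p:classifying maps} produces both the classifying map $F$ and its $G$-equivariant lift $\widetilde{F}$, so the preceding construction applies directly.

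I do not anticipate a real obstacle here; the proof is essentially a naturality check. The only mild subtlety is to make sure the pullback $(T\widetilde{F})^*\omega$ actually behaves like a pullback of $1$-forms in the sense of Proposition~\ref{p:V-valued forms} (i.e., that smooth fibrewise-linear maps $TE\to T_eG$ correspond to genuine $T_eG$-valued $1$-forms on $E$, and that this correspondence is natural under smooth maps). Once this identification is invoked, the pullback operation is functorial and all three conditions for a connection $1$-form pass through automatically.
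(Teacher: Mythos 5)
Your proposal is correct and is essentially the paper's own argument: the paper's proof consists of the single observation that Proposition~\ref{p:classifying maps} supplies a $G$-equivariant smooth map $\widetilde{F}\colon E\to EG$, along which one pulls back the connection $1$-form $\omega$ of Theorem~\ref{t:universal connection}. You have simply written out the routine verification (fibrewise linearity, equivariance, and the vertical condition via $T\widetilde{F}(\xi_E|_y)=\xi_{EG}|_{\widetilde{F}(y)}$) that the paper leaves implicit.
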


\begin{proof}
This follows from the existence of a $G$-equivariant map $E\to EG$ (Proposition~\ref{p:classifying maps}).
\end{proof}

In order to continue, we need the following basic lemma about differentiating plots.

\begin{lemma}\labell{l:diff of plots}
Let $(X,\mathcal{D})$ be a diffeological space, and let $T\colon\mathcal{D}\to T\mathcal{D}$ be the map sending a plot $p$ to its induced plot $Tp$.  Then $T$ is a smooth map.
\end{lemma}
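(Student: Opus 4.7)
The plan is to unpack the standard functional diffeology on $\mathcal{D}$ and $T\mathcal{D}$ and verify directly that $T\circ P$ satisfies the two-clause plot condition for $T\mathcal{D}$ whenever $P$ is a plot of $\mathcal{D}$. Everything will reduce to functoriality of $T$ (Remark~\ref{r:tangent bdle}) together with condition~(2) in the definition of the tangent pseudo-bundle diffeology.

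First I would fix a plot $P\colon U\to\mathcal{D}$, a point $u_0\in U$, and a tangent vector $w_0\in\dom(T(P(u_0)))=T\dom(P(u_0))$, written $w_0=(v_0,\xi_0)$ with $v_0\in\dom(P(u_0))\subseteq\R^n$ and $\xi_0\in\R^n$. Applying the functional-diffeology plot condition for $P$ at $(u_0,v_0)$ produces open neighbourhoods $U'$ of $u_0$ and $V'$ of $v_0$ with $V'\subseteq\dom(P(u'))$ for every $u'\in U'$, together with a plot $\Psi\colon U'\times V'\to X$ given by $\Psi(u',v')=P(u')(v')$. I would then take $W':=V'\times\R^n\subseteq TV'$ as the neighbourhood of $w_0$; the inclusion $W'\subseteq T\dom(P(u'))$ for all $u'\in U'$ is immediate, which disposes of the first clause of the functional-diffeology condition for $T\circ P$.

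For the second clause, it remains to show that the parametrisation $\Phi\colon U'\times W'\to TX$, $(u',v',\xi')\mapsto T(P(u'))(v',\xi')$, is a plot of $TX$. Since $\Psi$ is a plot of $X$, condition~(2) of Definition~\ref{d:tangent bdle} hands us $T\Psi\colon T(U'\times V')\to TX$ as a plot of $TX$. Using the identification $T(U'\times V')\cong TU'\times TV'$, the chain rule applied to the factorisation $P(u')=\Psi(u',\cdot\,)$ yields the pointwise identity
$$T(P(u'))(v',\xi')=T\Psi(u',v',0,\xi'),$$
which displays $\Phi$ as the composition of the smooth Euclidean insertion $(u',v',\xi')\mapsto(u',v',0,\xi')$ with the plot $T\Psi$, hence as a plot of $TX$. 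This establishes that $T\circ P$ lies in the functional diffeology on $T\mathcal{D}$, and so $T$ is smooth.

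No serious obstacle arises in this argument: the proof is essentially bookkeeping with the two nested definitions. The only step that merits any thought is the chain-rule identity relating $T(P(u'))$ to $T\Psi$, which follows because $P(u')$ is $\Psi$ precomposed with the slice inclusion $v'\mapsto(u',v')$ whose tangent at $v'$ sends $\xi'$ to $(0,\xi')$.
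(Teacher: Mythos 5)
Your proof is correct and follows essentially the same route as the paper's: both pass to the plot $\Psi(u',v')=P(u')(v')$ supplied by the functional diffeology, use condition~(2) of Definition~\ref{d:tangent bdle} together with $T(U'\times V')\cong TU'\times TV'$ to see that $T\Psi$ is a plot of $TX$, and then identify $T(P(u'))(v',\xi')$ with $T\Psi(u',v',0,\xi')$ by restricting to the zero-section of $TU'$ (your ``slice inclusion'' chain-rule identity is exactly the paper's zero-section restriction). The only difference is that you spell out the two-clause verification of the functional diffeology more explicitly, which the paper leaves implicit.
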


\begin{proof}
Fix a plot $p\colon U\to\mathcal{D}$ in the standard functional diffeology.  Using the same notation as in Definition~\ref{d:iz-connection}, by the definition of the diffeology on $TX$ and Remark~\ref{r:tangent bdle} the map $T\Psi\colon TU'\times TV'\to TX$ is a plot of $TX$.  Restricting the first coordinate of $T\Psi$ to the zero-section of $TU'$ we get the smooth map $U'\times TV'\to TX\colon (u',w)\mapsto T\Psi|_{(u',v')}(0,w)$ where $v'$ is the foot-point of $w$.  Since this is smooth, by the definition of the functional diffeology we have that the map $U'\to\CIN(TV',TX)$ sending $u'$ to $Tp(u')$ is a plot of $T\mathcal{D}$.
\end{proof}

Now the main obstacle in proving Theorem~\ref{t:iz-connection} is to show that a connection 1-form on a principal $G$-bundle yields a diffeological connection; in particular, that one obtains a smooth map from the tautological bundle to the local paths of the total space.  This is the content of the following proposition (\emph{cf.} \cite[Section 2.1]{Ma2013}).

\begin{proposition}[Connection 1-Forms Induce Diffeological Connections]\labell{p:iz-connection}
Let $G$ be a regular diffeological Lie group, and let $\pi\colon E\to X$ be a principal $G$-bundle.  Then a connection 1-form $\omega$ on $\pi$ induces a diffeological connection on $\pi$.
\end{proposition}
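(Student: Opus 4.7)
The plan is to produce, for each $(\gamma,t_0)\in\tbpath(E)$, a horizontal lift of $\pi\circ\gamma$ through $\gamma(t_0)$ by acting on $\gamma$ by the inverse of a $G$-valued path solving a first-order ODE built from $\omega$ and $\dot\gamma$. Regularity of $G$ supplies the solution and its uniqueness; uniqueness in turn yields the equivariance and reparametrisation axioms of Definition~\ref{d:iz-connection}.

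Concretely, with $\dom(\gamma)=(a,b)$, set $\xi(t):=\omega(\dot\gamma(t))\in\g$. Using that $\omega$ is the identity on fundamental vector fields, that $\omega$ is $\Ad$-equivariant, and the identity $(R_g)_*\Ad_g=(L_g)_*$ on $T_eG$, a direct computation shows that a path of the form $\psi(t)=h(t)^{-1}\cdot\gamma(t)$ is horizontal (i.e.\ $\omega(\dot\psi)\equiv 0$) if and only if $h$ satisfies the right-invariant ODE
\[ h(t_0)=e,\qquad (R_{h(t)^{-1}})_*\dot h(t)=\xi(t), \]
which is exactly the form solved by the regular exponential map. Setting $\theta(\gamma,t_0)(t):=h(t)^{-1}\cdot\gamma(t)$, axioms (1)--(3) of Definition~\ref{d:iz-connection} hold immediately. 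For (4), one checks that under $\gamma\mapsto g\cdot\gamma$ the curve $\xi$ becomes $\Ad_g\xi$ and that $gh(\cdot)g^{-1}$ solves the new ODE; uniqueness then gives $\theta(g\cdot\gamma,t_0)=g\cdot\theta(\gamma,t_0)$. For (5), $t\mapsto h(f(t))$ solves the ODE attached to $\gamma\circ f$ with initial time $s$ and basepoint $f(s)$, again by uniqueness. Idempotence (6) is immediate, since the velocity of $\theta(\gamma,t_0)$ annihilates $\omega$, forcing the next iterate's $h$ to be constantly $e$.

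The main obstacle is smoothness of $\theta\colon\tbpath(E)\to\pathloc(E)$ in the functional diffeologies. A plot of $\tbpath(E)$ corresponds, locally in its parameter $u$, to a smooth family $(u,t)\mapsto\gamma_u(t)$ together with a smooth assignment $u\mapsto t_0(u)$. The construction of $\theta$ factors as: (i) differentiation $(u,t)\mapsto\dot\gamma_u(t)\in TE$, smooth by Lemma~\ref{l:diff of plots}; (ii) post-composition with the smooth map $\omega\colon TE\to\g$; (iii) passage to the $G$-valued solution of the right-invariant ODE, smooth by regularity (Definition~\ref{d:diffeol lie gp}); (iv) inversion in $G$, which is smooth since $G$ is a diffeological group, followed by the smooth action $G\times E\to E$. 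Unpacking the characterisation of the functional diffeology recalled in Definition~\ref{d:iz-connection}, each step preserves smoothness in the relevant functional diffeology, so the composite is a plot of $\pathloc(E)$. The most delicate bookkeeping is step (iii): regularity as stated supplies a smooth map $\CIN([0,1],\g)\to\CIN([0,1],G)$ with initial condition at $0$, so to handle a movable basepoint $t_0(u)$ on an arbitrary open interval $(a,b)$ one works locally on compact subintervals, uses affine reparametrisations to transport to $[0,1]$ with initial time $0$, applies regularity there, and glues via uniqueness of solutions.
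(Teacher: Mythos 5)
Your proposal is correct and follows essentially the same route as the paper: apply $\omega$ to the velocity of $\gamma$, use regularity of $G$ to solve the resulting right-logarithmic ODE, act on $\gamma$ by the solution to horizontalise, and deduce smoothness of $\theta$ by composing the differentiation map (Lemma~\ref{l:diff of plots}), $\omega$, the regular exponential, and the group operations. The only difference is cosmetic (you solve for $h=g^{-1}$ so the ODE matches Definition~\ref{d:diffeol lie gp} verbatim), and your verification of the six axioms and of the movable-basepoint bookkeeping is somewhat more explicit than the paper's.
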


\begin{proof}
Fix a smooth curve $\gamma\colon(a,b)\to E$ and a point $t_0\in(a,b)$.  Our first goal is to obtain a smooth curve $g\colon(a,b)\to G$ such that $g(t_0)=e$ and the smooth curve $t\mapsto g(t)\cdot\gamma(t)$ satisfies $$\omega\left(\frac{d(g\cdot \gamma)(t)}{dt}\right)=0.$$  We will denote $g(t)\cdot\gamma(t)$ by $\theta(\gamma,t_0)(t)$.  By the chain rule (see Remark~\ref{r:tangent bdle})  the derivative of $\theta(\gamma,t_0)$ is $g\cdot\dot{\gamma}+\dot{g}\cdot\gamma$.  Applying $\omega$ to this, we obtain the differential equation $$\omega(\dot{\gamma})(t)=-\Ad_{g^{-1}}\dot{g}|_{\gamma(t)}.$$ 

Choose $\eps>0$ so that $a<t_0-\eps$ and $t_0+\eps<b$.  After composing with an appropriate translation and dilation, it follows from the regularity of $G$ that there is a smooth solution $[t_0-\eps,t_0+\eps]\to G$; in fact, uniqueness of solutions implies that applying this procedure to each $t_0\in(a,b)$ will yield a smooth curve $g\colon (a,b)\to G$ as required.  We thus have proved the existence of $\theta\colon\!\tbpath(E)\to\pathloc(E)$.  

To show that $\theta$ is smooth, note that the map $\gamma\mapsto\dot{\gamma}$ is smooth by Lemma~\ref{l:diff of plots}, as well as $\omega$ and the exponential map in the definition of the regularity of $G$.  Finally, the translations and dilations are smooth, and since $\theta$ is a composition of all of these things, smoothness follows.

It is an easy exercise to check that $\theta$ satisfies the six conditions in Definition~\ref{d:iz-connection}.  This completes the proof.
\end{proof}

\begin{proof}[Proof of Theorem~\ref{t:iz-connection}]
By Corollary~\ref{c:universal connection} the principal $G$-bundle $\pi\colon E\to X$ has a connection 1-form.  By Proposition~\ref{p:iz-connection} this induces a diffeological connection on $\pi$.  In particular, we obtain horizontal lifts of smooth curves into $X$ (see Remark~\ref{r:iz-connection}).
\end{proof}

\begin{remark}\labell{r:homotopy and conn}
We end this section with the following note.  Let $G$ be a \emph{regular} diffeological \emph{Lie} group.  By Proposition~\ref{p:iz-connection} we have a diffeological connection on any principal $G$-bundle.  By Remark~\ref{r:homotopy}, Proposition~\ref{p:homotopy} holds without any assumption on the topology of the base.  Since Proposition~\ref{p:uniqueness} also has no conditions on the topology of the base, it follows that \emph{if} you have two principal $G$-bundles (with no assumptions on them) $E\to X$ and $E'\to X$ \emph{with classifying maps} $F\colon X\to BG$ and $F'\colon X\to BG$, then $E$ and $E'$ are isomorphic bundles if and only if $F$ and $F'$ are smoothly homotopic.  
\end{remark}

%%%%%%%%%%%%%%%%%%%%%%%%%%%%%%%%%%%%%%%%%%%%%%%%%%%%%%%%%%%%%%%%%%%%%%%%
\section{Applications}\labell{s:applications}
%%%%%%%%%%%%%%%%%%%%%%%%%%%%%%%%%%%%%%%%%%%%%%%%%%%%%%%%%%%%%%%%%%%%%%%%

In this section, we apply the theory developed in the previous sections to various situations.  Many of these are motivated by applications found in \cite{iglesias}, \cite{KM}, \cite{Ma2013}, \cite{Ma2016}, \cite{R}, and \cite{tD}.

%% %% %%
\subsection{Contractibility of $EG$ \& Homotopy of $BG$}\labell{ss:contractible}
%% %% %%

In this subsection, we show that $EG$ is smoothly contractible, which allows us to compute the (diffeological) homotopy groups of $BG$ in terms of those of $G$.  Again, we look to the topological proof found in \cite{tD} for a template.

\begin{proposition}[$EG$ is Contractible]\labell{p:contractible}
Let $G$ be a diffeological group.  Then $EG$ is smoothly contractible.
\end{proposition}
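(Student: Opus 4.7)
The strategy is to build a smooth map $H\colon EG\times[0,1]\to EG$ satisfying $H(\cdot,0)=\id_{EG}$ and $H(\cdot,1)\equiv y_0$, where $y_0:=(1\cdot e,\,0\cdot e,\,0\cdot e,\ldots)$ is a chosen basepoint. This is essentially a direct application of the construction in the proof of Proposition~\ref{p:uniqueness}, with the key observation that that construction produces a smooth homotopy $H$ out of two maps $f,h\colon E\to EG$ without ever using $G$-equivariance of the inputs --- equivariance of $f$ and $h$ was only used at the very end to conclude $G$-equivariance of $H$. Thus the same formulas apply with $f=\id_{EG}$ (equivariant) and $h\equiv y_0$ (not equivariant, but we do not need our contracting homotopy to be equivariant).

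Explicitly, with $f(y)=y=(t_ig_i)$ giving $s_i(y)=t_i$ and $f_i(y)=g_i$, and $h(y)=y_0$ giving $t_1(y)\equiv 1$, $h_1(y)\equiv e$, $t_i(y)\equiv 0$ for $i\geq 2$, I would form the smooth spreading maps $F_n,S_n\colon EG\times[0,1]\to EG$, their countable concatenations $F,S\colon EG\times[0,\infty)\to EG$ via the floor function, the reparametrisations $F',S'\colon EG\times[0,1]\to EG$ with $F'(y,1)=y$ and $S'(y,1)=y_0$, and the cross-interpolation $T\colon EG\times[0,1]\to EG$ satisfying $T(y,0)=F'(y,0)$ and $T(y,1)=S'(y,0)$. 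The contracting homotopy $H$ is then assembled from $F'$, $T$, and $S'$ using the bump function $b$, exactly as in the closing formula of the proof of Proposition~\ref{p:uniqueness}. By construction $H(y,0)=F'(y,1)=y$ and $H(y,1)=S'(y,1)=y_0$.

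The only technical subtlety I anticipate is the smoothness of $F'$ at the endpoint $\tau=1$. This uses the same observation already employed in Proposition~\ref{p:uniqueness}: for any fixed $y\in EG$ only finitely many $t_i(y)$ are nonzero, so $F_n(y,\cdot)$ stabilises at $y$ for $n$ sufficiently large, which allows the reparametrised $F'$ to extend smoothly across $\tau=1$. The analogous issue for $S'$ is trivial in our setting --- since only $t_1\equiv 1$ is nonzero for $h\equiv y_0$, one checks directly from the defining formula that $S_n(y,\tau)\equiv y_0$ for every $n\geq 2$ and every $\tau$, so $S'$ is in fact constant on an entire subinterval ending at $1$, and smoothness at that endpoint is immediate. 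Because $[0,1]$ carries the subset diffeology from $\RR$ (Remark~\ref{r:[0,1]}), this $H$ is precisely a smooth contraction of $EG$ onto $\{y_0\}$.
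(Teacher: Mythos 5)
Your proof is correct and follows essentially the same route as the paper, which also contracts $EG$ by reusing the homotopy machinery of Proposition~\ref{p:uniqueness}. The only real difference is one of packaging: the paper invokes Proposition~\ref{p:uniqueness} as stated --- applied to the identity and the ($G$-equivariant) shift map $(t_ig_i)\mapsto(t_1g_1,0,t_2g_2,t_3g_3,\dots)$ --- and then appends a single explicit formula collapsing the freed second slot to $(0,e,0,0,\dots)$, whereas you re-open that proof's construction with the non-equivariant constant map $h\equiv y_0$; your observation that $G$-equivariance of the inputs is used only to conclude equivariance of the output homotopy, not to build it, is accurate.
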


\begin{proof}
Let $H_1\colon EG\times[0,1]\to EG$ be a smooth homotopy given by 
\begin{gather*}
H_1((t_ig_i),0)=(t_ig_i) \text{ and}\\
H_1((t_ig_i),1)=(t_1g_1,0,t_2g_2,t_3g_3,\dots). 
\end{gather*}
using the same notation as that in the proof of Proposition~\ref{p:uniqueness}.  This exists by Proposition~\ref{p:uniqueness}.  Let $b$ be the smooth function from the same proof and let $H_2\colon EG\times[0,1]\to EG$ be the smooth map 
$$H_2((t_ig_i),\tau)=\big((1-b(\tau))t_1g_1,b(\tau)e,(1-b(\tau))t_2g_2,(1-b(\tau))t_3g_3,\dots\big)$$

Concatenating $H_1$ and $H_2$ yields a smooth homotopy.  Since $$H_2((t_ig_i),1)=\big(0,e,0,0,\dots\big),$$ which is constant on $EG$, it follows that $EG$ is smoothly contractible.
\end{proof}

\begin{proposition}[Homotopy of $BG$]\labell{p:homotopy of BG}
Let $G$ be a diffeological group.  Then for each $k>0$, we have $\pi_k(BG)\cong\pi_{k-1}(G)$.
\end{proposition}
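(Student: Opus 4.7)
The plan is to run the standard long exact sequence argument from topology entirely within diffeology. Recall from Proposition~\ref{p:EG principal} that $EG\to BG$ is a principal $G$-bundle, and hence in particular a diffeological fibre bundle in the sense of Definition~\ref{d:fibre bundles}. Iglesias-Zemmour shows in \cite[Article 8.21]{iglesias} that any diffeological fibration $F\to E\to B$ induces a long exact sequence of diffeological homotopy groups
\begin{equation*}
\cdots \to \pi_k(G) \to \pi_k(EG) \to \pi_k(BG) \to \pi_{k-1}(G) \to \cdots \to \pi_0(BG).
\end{equation*}
The first step of my proof would be to invoke this result for $G\to EG\to BG$; since the bundle is principal, the fibre over any basepoint is (noncanonically, after a choice of point above the basepoint) diffeomorphic to $G$.

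The second step is then to substitute the contractibility of $EG$ obtained in Proposition~\ref{p:contractible}. A smooth contraction of $EG$ to a point provides, for each $k\geq 0$, a smooth homotopy of any smooth pointed sphere $(S^k,*)\to EG$ to the constant map, so $\pi_k(EG)=0$ for all $k$. Plugging this into the long exact sequence, the connecting homomorphism $\pi_k(BG)\to\pi_{k-1}(G)$ is sandwiched between two trivial groups and is therefore an isomorphism for every $k>0$.

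The main conceptual obstacle is ensuring that the diffeological long exact sequence in \cite[Article 8.21]{iglesias} actually applies to our setup: one must verify that the definition of diffeological fibration used there agrees with (or is implied by) Definition~\ref{d:fibre bundles} of this paper, and that smooth homotopy groups behave as expected under such fibrations (in particular that the connecting map is well-defined using the local triviality established in Proposition~\ref{p:EG principal}). Once that correspondence is checked, the remaining work is purely formal, and the only thing one still has to say at $k=1$ is that the identification $\pi_1(BG)\cong\pi_0(G)$ is a bijection of pointed sets even when $\pi_0(G)$ carries no group structure, which is exactly what the end of the exact sequence asserts.
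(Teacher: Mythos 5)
Your proposal is correct and is essentially the paper's own argument: the paper likewise deduces the result immediately from the long exact sequence of diffeological homotopy groups for the principal bundle $EG\to BG$ (citing \cite[Article 8.21]{iglesias}) together with the contractibility of $EG$ from Proposition~\ref{p:contractible}. Your additional remarks about checking the compatibility of fibration definitions and the pointed-set issue at $k=1$ are reasonable diligence but not points the paper elaborates on.
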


\begin{proof}
This is immediate from the fact that $EG\to BG$ is a principal $G$-bundle, Proposition~\ref{p:contractible}, and the long exact sequence of (diffeological) homotopy groups; see \cite[Article 8.21]{iglesias}.
\end{proof}

\begin{example}[Homotopy of $BT_\alpha$]\labell{x:irrational torus}
Fix an irrational number $\alpha$.  Let $G$ be the irrational torus $T_\alpha:=\RR/\ZZ^2$, where $\ZZ^2$ acts on $\RR$ by $(m,n)\cdot x=x+m+n\alpha$.  By the long exact sequence of (diffeological) homotopy groups \cite[Article 8.21]{iglesias}, it is immediate that $\pi_0(T_\alpha)=\{T_\alpha\}$, $\pi_1(T_\alpha)=\ZZ^2$, and $\pi_k(T_\alpha)=0$ for $k>1$.  It follows from Proposition~\ref{p:homotopy of BG} that $\pi_0(BT_\alpha)=\{BT_\alpha\}$, $\pi_1(BT_\alpha)=0$, $\pi_2(BT_\alpha)=\ZZ^2$, and $\pi_k(BT_\alpha)=0$ for all $k>2$.
\eoe
\end{example}

%% %% %%
\subsection{Smooth Homotopies of Groups and Bundles}\labell{ss:homotopies}
%% %% %%

Here we look at how smooth group homomorphisms between diffeological groups induce smooth maps between the classifying spaces, with applications to smooth strong deformation retractions.

\begin{proposition}[Smooth Maps Between Diffeological Groups]\labell{p:maps between gps}
Let $G$ and $H$ be diffeological groups, and let $\varphi\colon G\to H$ be a smooth map between them.  Then $\varphi$ induces a smooth map $\widetilde{\Phi}\colon EG\to EH$ defined as $$\widetilde{\Phi}(t_ig_i)=(t_i\varphi(g_i)).$$  Moreover, if $\varphi$ is a smooth homomorphism, $\widetilde{\Phi}$ descends to a smooth map $\Phi\colon BG\to BH$.
\end{proposition}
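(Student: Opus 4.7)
The plan is to verify well-definedness, then smoothness, of $\widetilde{\Phi}$, and finally handle the descent to $\Phi$ when $\varphi$ is a homomorphism. All of these are routine chases through the quotient diffeologies on $EG$ and $BG$, so I do not expect a genuine obstacle; the work is mostly bookkeeping.

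First I would check that $\widetilde{\Phi}$ is well-defined on equivalence classes: if $(t_i, g_i) \sim (t_i', g_i')$ in $S_G$, then $t_i = t_i'$ for all $i$, and $g_i = g_i'$ whenever $t_i \neq 0$, so in particular $\varphi(g_i) = \varphi(g_i')$ when $t_i \neq 0$. Thus $(t_i, \varphi(g_i)) \sim (t_i, \varphi(g_i'))$ in $S_H$, and the prescription $(t_ig_i) \mapsto (t_i\varphi(g_i))$ gives a well-defined map $\widetilde{\Phi}\colon EG \to EH$.

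For smoothness, define the map $\varphi_S\colon S_G \to S_H$ by $\varphi_S(t_i,g_i) := (t_i,\varphi(g_i))$. Since $\varphi$ is smooth and $S_G$, $S_H$ carry the subset diffeology from the product, $\varphi_S$ is smooth: its composition with the projections $\pr_{t_i}$ and $\pr_{g_i}$ is either a projection (hence smooth) or $\varphi\circ\pr_{g_i}$ (hence smooth). Writing $\rho_G\colon S_G\to EG$ and $\rho_H\colon S_H\to EH$ for the quotient maps, we have by construction $\widetilde{\Phi}\circ \rho_G = \rho_H\circ\varphi_S$. To check smoothness of $\widetilde{\Phi}$, let $p\colon U\to EG$ be a plot; by the quotient diffeology on $EG$, there is an open cover $\{U_\alpha\}$ of $U$ and plots $q_\alpha\colon U_\alpha\to S_G$ with $p|_{U_\alpha}=\rho_G\circ q_\alpha$. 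Then $\widetilde{\Phi}\circ p|_{U_\alpha} = \rho_H\circ\varphi_S\circ q_\alpha$, which is a plot of $EH$. Hence $\widetilde{\Phi}$ is smooth.

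Finally, assume $\varphi$ is a group homomorphism. Then for any $h\in G$,
\[
\widetilde{\Phi}(h\cdot (t_ig_i)) = \widetilde{\Phi}((t_ig_ih^{-1})) = (t_i\varphi(g_i)\varphi(h)^{-1}) = \varphi(h)\cdot\widetilde{\Phi}((t_ig_i)),
\]
so $\widetilde{\Phi}$ sends $G$-orbits into $H$-orbits. Hence, composing $\widetilde{\Phi}$ with the quotient map $EH\to BH$ gives a smooth $G$-invariant map $EG\to BH$, which by the universal property of the quotient diffeology on $BG$ descends to a smooth $\Phi\colon BG\to BH$ with $\Phi([t_ig_i]) = [t_i\varphi(g_i)]$. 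This completes the argument.
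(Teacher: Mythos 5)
Your proof is correct and follows the same route the paper intends; the paper's own proof is simply ``follows immediately from the definitions,'' and your argument fills in exactly those details (well-definedness on equivalence classes, smoothness via local lifts through the quotient diffeologies as in the proof of Proposition~\ref{p:EG action smooth}, and descent via the intertwining relation $\widetilde{\Phi}(h\cdot y)=\varphi(h)\cdot\widetilde{\Phi}(y)$).
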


\begin{proof}
The proof follows immediately from the definitions.
\end{proof}

\begin{remark}\labell{r:maps between gps}
If $\varphi_\tau\colon G\to H$ is a smooth family of maps between diffeological groups $G$ and $H$, then a similar proof to the above yields a smooth family of maps $\widetilde{\Phi}_\tau\colon EG\to EH$ defined in the obvious way, and in the case of a smooth family of group homomorphisms, we obtain a smooth family of maps $\Phi_{\tau}\colon BG\to BH$.
\end{remark}

\begin{corollary}[Smooth Deformation Retracts of Bundles]\labell{c:maps between gps}
Let $G$ be a diffeological group, and let $\Phi\colon G\times[0,1]\to G$ be a smooth strong deformation retraction of $G$ onto a subgroup $H$ such that for each $\tau\in[0,1]$, the map $\Phi(\cdot,\tau)\colon G\to G$ is a group homomorphism.  Then, up to isomorphism, there is a smooth strong deformation retract of any weakly D-numerable principal $G$-bundle over a Hausdorff smoothly paracompact base to a principal $H$-bundle over the same base.
\end{corollary}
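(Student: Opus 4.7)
The plan is to push the deformation retraction of $G$ forward through the $B$-construction from Proposition~\ref{p:maps between gps} and then pull it back along a classifying map of $E\to X$. First, I would apply Remark~\ref{r:maps between gps} to the smooth family of group homomorphisms $\Phi(\cdot,\tau)\colon G\to G$ to obtain a smooth map $\mathbf{\Phi}\colon BG\times[0,1]\to BG$, whose slice at $\tau$ is the induced map $\Phi_\tau$. Because $\Phi(\cdot,0)=\id_G$, we have $\Phi_0=\id_{BG}$. Because $\Phi(g,1)\in H$ for every $g\in G$, the induced map $\widetilde{\Phi}_1\colon EG\to EG$ lands inside $EH$, so $\Phi_1$ factors through the map $\iota_*\colon BH\to BG$ induced by the inclusion $\iota\colon H\hookrightarrow G$; write $\Phi_1=\iota_*\circ\Psi$ with $\Psi\colon BG\to BH$ smooth. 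The ``strong'' hypothesis $\Phi(h,\tau)=h$ for all $h\in H$ forces $\Phi_\tau\circ\iota_*=\iota_*$ for every $\tau$.

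Next, given a weakly D-numerable principal $G$-bundle $\pi\colon E\to X$ with $X$ Hausdorff and smoothly paracompact, Proposition~\ref{p:classifying maps} yields a smooth classifying map $F\colon X\to BG$ together with an isomorphism $E\cong F^*EG$. I would then form the smooth homotopy $\mathcal{F}\colon X\times[0,1]\to BG$, $\mathcal{F}(x,\tau):=\Phi_\tau(F(x))$, which connects $F$ to $\iota_*\circ\Psi\circ F$. Pulling back $EG$ along $\mathcal{F}$ produces a principal $G$-bundle over $X\times[0,1]$ that restricts at $\tau=0$ to $E$ and at $\tau=1$ to $(\iota_*\circ\Psi\circ F)^*EG$, which is the $G$-bundle obtained by extension of structure group from the principal $H$-bundle $P:=(\Psi\circ F)^*EH\to X$ along $\iota$. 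The ``strong'' property then follows from $\Phi_\tau\circ\iota_*=\iota_*$: if $F$ already factors through $\iota_*$, then $\mathcal{F}(\cdot,\tau)$ is independent of $\tau$, so the corresponding family of bundles is constant as well.

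The main obstacle I anticipate is identifying $(\iota_*\circ\Psi\circ F)^*EG$ with the associated $G$-bundle $P\times_H G$ entirely in the diffeological setting. This reduces to showing that $\iota_*^*EG\to BH$ is isomorphic as a $G$-bundle to the associated bundle $EH\times_H G\to BH$, which I would verify using the explicit formula for $\widetilde{\Phi}_1$ from Proposition~\ref{p:maps between gps} together with the smoothness and $G$-equivariance of the natural candidate map sending $([t_ih_i],(t_ih_ig))$ to the class of $((t_ih_i),g)$ in $EH\times_H G$. Once this naturality is in hand, combining the three steps above packages the answer as the required strong deformation retract of bundles.
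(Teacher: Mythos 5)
Your proposal is correct and follows essentially the same route as the paper: induce the deformation retraction $\Phi_\tau$ on $BG$ via Proposition~\ref{p:maps between gps} and Remark~\ref{r:maps between gps}, take a classifying map $F$ from Proposition~\ref{p:classifying maps}, and compose before pulling back. The paper's own proof is terser---it simply records the pullbacks $(\Phi_\tau\circ F)^*EG$ and identifies the retracted bundle as the pullback of $EH$ along the composite classifying map, omitting your extra (correct, but not needed for the statement as phrased) identification of that end bundle with the extension of structure group $P\times_H G$.
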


\begin{proof}
By Proposition~\ref{p:maps between gps} and Remark~\ref{r:maps between gps}, we obtain a smooth strong deformation retract $\widetilde{\Phi}_\tau$ of $EG$ onto $EH$, which descends to a smooth strong deformation retract $\Phi_\tau$ of $BG$ onto $BH$.  Let $E\to X$ be a weakly D-numerable principal $G$-bundle over a Hausdorff smoothly paracompact base $X$.  By Proposition~\ref{p:classifying maps}, there is a smooth map $\widetilde{F}\colon E\to EG$ which descends to a smooth classifying map $F\colon X\to BG$ for which $E$ is isomorphic as a principal $G$-bundle to $F^*EG$.  Composing $F$ with $\Phi_\tau$, we obtain a smooth strong deformation retract of $F^*EG$ onto $(\Phi_0\circ F)^*EH$, where at any $\tau$ we have $$(\Phi_\tau\circ F)^*EG=\{(x,(t_ig_i))\mid \Phi_\tau\circ F(x)=[t_ig_i]\}.$$
\end{proof}

\begin{example}[$E(\Diff(\RR^n;0))$]\labell{x:diff Rn}
Let $G=\Diff(\RR^n;0)$ be the diffeological group of diffeomorphisms of $\RR^n$ that fix the origin.  Define $\Phi\colon G\times[0,1]\to G$ by 
$$\Phi(\varphi,\tau)=\begin{cases}
m_{1/\tau}\circ\varphi\circ m_\tau & \text{if $\tau\neq 0$,}\\
d\varphi|_0 & \text{if $\tau=0$,}\\
\end{cases}$$
where $m_\tau\colon\RR^n\to\RR^n$ is scalar multiplication by $\tau$ (which is smooth).
By definition of the functional diffeology \cite[Article 1.57]{iglesias}, it is an easy exercise to check that this is a smooth strong deformation retract of $G$ onto $\GL(n;\RR)$.  Moreover, the chain rule shows that $\Phi(\cdot,\tau)$ is a group homomorphism for each $\tau$.  It follows from Proposition~\ref{p:maps between gps}  that $EG$ has a smooth strong deformation retract onto $E\GL(n;\RR)$, and this descends to a smooth strong deformation retract of $BG$ onto $B\GL(n;\RR)$.  By Corollary~\ref{c:maps between gps} we have that any weakly D-numerable principal $G$-bundle over a Hausdorff smoothly paracompact base has a smooth strong deformation retract onto a principal $\GL(n;\RR)$-bundle over the same base.
\eoe
\end{example}

\begin{example}[$E(\Diff^+(\SS^2))$]\labell{x:diff S2}
Let $G=\Diff^+(\SS^2)$ be the diffeological group of orientation-preserving diffeomorphisms of $\SS^2$ (equipped with the functional diffeology).  By the main result of \cite{LW2011} there is a smooth strong deformation retraction $\kappa\colon G\times[0,1]\to G$ onto $\SO(3)$.  By Proposition~\ref{p:maps between gps} and Remark~\ref{r:maps between gps}, $\kappa$ induces a smooth strong deformation retraction from $EG$ to $E(\SO(3))$.  Unfortunately, the smooth strong deformation retraction in \cite{LW2011} does not give a group homomorphism from $G$ to $\SO(3)$, and so the deformation retraction does not descend to $BG$ and $B\SO(3)$.
\eoe
\end{example}

%% %% %%
\subsection{Associated Fibre Bundles}\labell{ss:associated bundles}
%% %% %%

Let $G$ be a diffeological group, and let $\pi\colon E\to X$ be a principal $G$-bundle.  Let $F$ be a diffeological space admitting a smooth action of $G$.  Then we may construct the \textbf{associated bundle to $\pi$ with fibre $F$}, denoted $\widetilde{\pi}\colon \widetilde{E}\to X$, as follows: let $G$ act diagonally on $E\times F$; then define $\widetilde{E}$ as the quotient $E\times_G F:=(E\times F)/G$.  Any diffeological fibre bundle with structure group $G$ can be constructed as an associated fibre bundle of some principal $G$-bundle.  See \cite[Article 8.16]{iglesias} for more details.  Note that if a fibre bundle is (weakly) D-numerable any associated principal $G$-bundle is (weakly) D-numerable as well.  We have the following corollary to Theorem~\ref{t:BG}:

\begin{corollary}[Diffeological Fibre Bundles]\labell{c:BG}
Let $\mathcal{B}^F_G(\cdot)$ be the functor from $\mathbf{Diffeol}_{\operatorname{HSP}}$ to $\mathbf{Set}$ sending an object $X$ to the set of isomorphism classes of locally trivial diffeological fibre bundles with fibre a fixed $G$-space $F$.  Then there is a natural surjection from $[\cdot,BG]$ to $\mathcal{B}^F_G(\cdot)$.
\end{corollary}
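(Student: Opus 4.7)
The plan is to define the natural transformation $\eta\colon [\cdot,BG]\to\mathcal{B}^F_G(\cdot)$ by sending the smooth homotopy class of a map $f\colon X\to BG$ to the isomorphism class of the associated fibre bundle $f^*EG\times_G F$, built from the principal $G$-bundle $f^*EG$ obtained from Theorem~\ref{t:BG} and the given smooth $G$-action on $F$. The work then splits into three steps: well-definedness on homotopy classes, surjectivity onto $\mathcal{B}^F_G(X)$, and naturality in $X$.

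First I would check well-definedness. If $f_0,f_1\colon X\to BG$ are smoothly homotopic, then by Proposition~\ref{p:homotopy} (applied to the locally trivial principal bundle $EG\to BG$ from Proposition~\ref{p:EG principal}) the pullbacks $f_0^*EG$ and $f_1^*EG$ are isomorphic as principal $G$-bundles. The associated bundle construction $E\mapsto E\times_G F$ is functorial with respect to equivariant bundle isomorphisms, so the associated bundles are diffeomorphic as locally trivial $F$-bundles. Hence $\eta_X$ is well-defined.

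Next I would prove surjectivity. Let $\widetilde{\pi}\colon\widetilde{E}\to X$ represent a class in $\mathcal{B}^F_G(X)$. By the discussion preceding the corollary (see \cite[Article 8.16]{iglesias}), any such bundle is isomorphic to $E\times_G F$ for some principal $G$-bundle $\pi\colon E\to X$, and the (weak) D-numerability of $\widetilde{\pi}$ transfers to $\pi$. Since $X\in\mathbf{Diffeol}_{\operatorname{HSP}}$, Remark~\ref{r:d-numerable} ensures $\pi$ is D-numerable. Theorem~\ref{t:BG} then produces a smooth classifying map $f\colon X\to BG$ with $E\cong f^*EG$, whence $\widetilde{E}\cong f^*EG\times_G F=\eta_X([f])$.

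Finally, naturality comes from the standard interplay of pullbacks with associated-bundle formation: for $\varphi\colon X\to Y$ smooth and $f\colon Y\to BG$ smooth, one has a canonical diffeomorphism $(f\circ\varphi)^*EG\cong\varphi^*(f^*EG)$ of principal $G$-bundles, and this intertwines the diagonal $G$-actions on $(\cdot)\times F$, yielding $(f\circ\varphi)^*EG\times_G F\cong\varphi^*(f^*EG\times_G F)$. The main obstacle I expect is surjectivity, specifically the reduction of a locally trivial $F$-bundle with structure group $G$ to an associated bundle of a D-numerable principal $G$-bundle; once one cites the construction in \cite[Article 8.16]{iglesias} and invokes Remark~\ref{r:d-numerable}, the rest of the argument reduces to assembling already-established functorial properties. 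The map is only a surjection (not a bijection) because non-isomorphic principal $G$-bundles can give isomorphic $F$-bundles when the $G$-action on $F$ is not effective, so no injectivity claim is attempted.
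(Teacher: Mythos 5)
Your proposal is correct and follows essentially the same route the paper intends: the corollary is stated without a separate proof precisely because it is the composition of the bijection of Theorem~\ref{t:BG} with the associated-bundle construction $E\mapsto E\times_G F$, using the facts recorded just before the corollary (every fibre bundle with structure group $G$ is an associated bundle of a principal $G$-bundle, and D-numerability passes between the two over a base in $\mathbf{Diffeol}_{\operatorname{HSP}}$ via Remark~\ref{r:d-numerable}). Your three steps --- well-definedness via Proposition~\ref{p:homotopy}, surjectivity via the frame-bundle reduction and Theorem~\ref{t:BG}, and naturality via compatibility of pullbacks with $(\cdot)\times_G F$ --- together with the observation that non-effective actions prevent injectivity, are exactly the intended argument.
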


Using the same notation as above, let $\theta$ be a diffeological connection on $E$ induced by a connection 1-form $\omega$ on $\pi\colon E\to X$ as in Proposition~\ref{p:iz-connection}.  Now $\omega\oplus 0$ is a $T_{e}G$-valued 1-form on $E\times F$, and one can check using the fact that $\pi$ is D-numerable that $\omega$ descends to a $T_{e}G$-valued 1-form $\omega_{\widetilde{E}}$ on $\widetilde{E}$, which we call a \textbf{connection 1-form} on $\widetilde{E}$.

\begin{definition}[Horizontal Lifts on Associated Bundles]\labell{d:assoc lifts}
Let $\widetilde{\pi}\colon \widetilde{E}\to X$ be a diffeological fibre bundle, let $c\colon(a,b)\to X$ be a smooth curve, and fix $z\in\widetilde{E}$ and $t_0\in(a,b)$ such that $\widetilde{\pi}(z)=c(t_0)$.  A curve $c_{\widetilde{E}}\colon(a,b)\to \widetilde{E}$ is a \textbf{horizontal lift} of $c$ through $z$ if
\begin{enumerate}
\item $\widetilde{\pi}\circ c_{\widetilde{E}}=c$,
\item\labell{i:initial condition} $c_{\widetilde{E}}(t_0)=z$,
\item\labell{i:assoc lifts de} $\omega_{\widetilde{E}}\left(\frac{d}{dt}c_{\widetilde{E}}(t)\right)=0.$
\end{enumerate}
\end{definition}

\begin{proposition}[Horizontal Lifts on Associated Bundles Exist]\labell{p:assoc lifts}
Let $G$ be a regular diffeological Lie group, $F$ a diffeological space with a fixed smooth action of $G$, $X$ a Hausdorff smoothly paracompact diffeological space, and $\pi\colon E\to X$ a weakly D-numerable principal $G$-bundle.  Equip $E$ with the connection 1-form $\omega$ as in Corollary~\ref{c:universal connection}.  Then for any smooth curve $c\colon(a,b)\to X$, and any $z\in\widetilde{E}$, there is a horizontal lift $c_{\widetilde{E}}\colon(a,b)\to\widetilde{E}$ through $z$.
\end{proposition}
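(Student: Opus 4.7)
The plan is to reduce to the horizontal-lift statement for the principal bundle $\pi\colon E\to X$ that was obtained in Theorem~\ref{t:iz-connection}, and then push the lift forward through the quotient map $q\colon E\times F\to \widetilde{E}$. Concretely: choose a representative $(y_0,f_0)\in E\times F$ with $q(y_0,f_0)=z$; since $\pi(y_0)=\widetilde\pi(z)=c(t_0)$, the principal-bundle diffeological connection $\theta$ from Proposition~\ref{p:iz-connection} furnishes a smooth horizontal lift $\tilde c\colon (a,b)\to E$ of $c$ with $\tilde c(t_0)=y_0$ and $\omega(\dot{\tilde c}(t))=0$ for all $t$. Then define
\[
c_{\widetilde E}(t):=q\bigl(\tilde c(t),f_0\bigr)=[\tilde c(t),f_0],
\]
which is smooth as the composition of smooth maps.

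The first two conditions of Definition~\ref{d:assoc lifts} are immediate: $\widetilde\pi\circ c_{\widetilde E}=\pi\circ\tilde c=c$, and $c_{\widetilde E}(t_0)=[y_0,f_0]=z$. For condition~\eqref{i:assoc lifts de}, recall from the paragraph preceding Definition~\ref{d:assoc lifts} that the connection $1$-form $\omega_{\widetilde E}$ is, by construction, the unique form on $\widetilde E$ whose pullback by $q$ is $\omega\oplus 0$. The smooth curve $\sigma(t):=(\tilde c(t),f_0)$ in $E\times F$ has derivative $\dot\sigma(t)=(\dot{\tilde c}(t),0)$ under the identification $T(E\times F)\cong TE\times TF$ from Remark~\ref{r:tangent bdle}, because the second component is constant. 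Since $c_{\widetilde E}=q\circ\sigma$, the chain rule (Remark~\ref{r:tangent bdle}) gives
\[
\omega_{\widetilde E}\!\left(\tfrac{d}{dt}c_{\widetilde E}(t)\right)=(q^*\omega_{\widetilde E})(\dot\sigma(t))=(\omega\oplus 0)(\dot{\tilde c}(t),0)=\omega(\dot{\tilde c}(t))=0.
\]

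The main potential obstacle is the last step: making sure that $\omega_{\widetilde E}$ really does behave like the descent of $\omega\oplus 0$ along $q$ at the level of plots, so that the identity $(q^*\omega_{\widetilde E})(\dot\sigma)=(\omega\oplus 0)(\dot\sigma)$ is legal. This is precisely what is being asserted in the paragraph preceding Definition~\ref{d:assoc lifts} (and rests on D-numerability of $\pi$ together with the quotient/descent criterion for forms used in the proof of Theorem~\ref{t:universal connection}); once it is granted, the verification above is a single line. All other steps — existence of a representative $(y_0,f_0)$, smoothness of $\tilde c$ and of $q$, and the chain rule — are either immediate or have already been established in the paper.
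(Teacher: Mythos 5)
Your construction is the same as the paper's: the paper defines $c_{\widetilde{E}}(t):=\rho_{\widetilde{E}}(\theta(c,t_0)(t),z')$ with $z'$ chosen so the curve passes through $z$ at $t_0$, which is exactly your $q(\tilde c(t),f_0)$ with the horizontal lift $\tilde c$ coming from the diffeological connection of Proposition~\ref{p:iz-connection}. The paper simply asserts that the three conditions of Definition~\ref{d:assoc lifts} hold, whereas you spell out the verification of condition~(3) via $q^*\omega_{\widetilde E}=\omega\oplus 0$ and the chain rule; this is a correct and slightly more explicit rendering of the same argument.
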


\begin{proof}
Take $c_{\widetilde{E}}(t):=\rho_{\widetilde{E}}(\theta(c,t_0)(t),z')$, where $\rho_{\widetilde{E}}\colon E\times F\to\widetilde{E}$ is the quotient map and $z'\in F$ such that $\rho_{\widetilde{E}}(\theta(c,t_0)(t_0),z')=z$.  This definition is independent of the choice of $z'$.  This satisfies all of properties required in Definition~\ref{d:assoc lifts}.
\end{proof}

\begin{remark}\labell{r:assoc lifts}
We do not mention uniqueness of horizontal lifts in Proposition~\ref{p:assoc lifts}.  This requires the uniqueness of solutions to the initial-value problem given by the differential equation in Item~(\ref{i:assoc lifts de}) with initial condition as given in Item~(\ref{i:initial condition}), both in Definition~\ref{d:assoc lifts}.  It is not immediately clear under what conditions this would hold on a general diffeological space.
\end{remark}

\begin{example}[$\Diff(F)$-Bundles]\labell{x:assoc bdles}
Let $\pi\colon \widetilde{E}\to X$ be a diffeological fibre bundle with fibre $F$ and structure group $G=\Diff(F)$.  Define the set $\mathcal{FR}$ to be all commutative diagrams 
$$\xymatrix{
F \ar[r]^f \ar[d] & \widetilde{E} \ar[d]^{\pi} \\
\{*\} \ar[r] & X \\
}$$
where $f$ is a diffeomorphism onto a fibre of $\widetilde{E}$.  Equip $\mathcal{FR}$ with the subset diffeology induced by $\CIN(F,\widetilde{E})$.  One may think of this as a ``non-linear frame bundle'' of the bundle $\widetilde{E}$.  From above, we have that $\mathcal{FR}\times_{\Diff(F)}F\cong \widetilde{E}$, and by Corollary~\ref{c:BG}, we have that there is a natural surjection from smooth homotopy classes of maps $X\to B\Diff(F)$ to isomorphism classes of such bundles $\widetilde{E}$, provided that the bundles are locally trivial, and $X$ has Hausdorff, second-countable, and smoothly paracompact topology.

If $F$ is a smooth manifold, then $\Diff(F)$ is a regular diffeological Lie group \cite[Theorem 43.1]{KM}, and so provided that $\widetilde{E}\to X$ is a locally trivial principal $\Diff(F)$-bundle, and the topology on $X$ is Hausdorff, second-countable, and smoothly paracompact, then by Proposition~\ref{p:assoc lifts} $\widetilde{E}$ has a diffeological connection, which in turn gives us horizontal lifts of curves.
\eoe
\end{example}

%% %% %%
\subsection{Limit of Groups \& ILB Principal Bundles}\labell{ss:proj limits}
%% %% %%

We consider in this subsection limits of diffeological group, and in particular infinite-dimensional groups.  We rely heavily on \cite{Om} for terminology.

Fix a small category $J$, which we will think of as our ``index category''.  Let $\mathbf{DGroup}$ be the category of diffeological groups with smooth group homomorphisms between them, and let $F\colon J\to\mathbf{DGroup}$ be a functor.  Denote by $G_j$ the image $F(j)$ for each object $j$ of $J$, and by $\varphi_f$ the image $F(f)$ for each arrow $f$ in $J$.  Let $G=\lim F$ be the limit taken in the category of diffeological spaces; in particular, there is a smooth map $\varphi_j\colon G\to G_j$ for each object $j$ in $J$ such that if $f\colon j_1\to j_2$ is an arrow in $J$, then $\varphi_f\circ\varphi_{j_1}=\varphi_{j_2}.$

\begin{proposition}[Limit of Diffeological Groups]\labell{p:limit d-groups}
Let $F$ be the functor above, and define $G=\lim F$.  Then $G$ is a diffeological group.
\end{proposition}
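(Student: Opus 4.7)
The plan is to use the universal property of the limit, exploiting the fact that limits of diffeological spaces are computed as subspaces of products.

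First, I would unwind what the limit $G = \lim F$ looks like concretely. As a set, $G$ sits inside the product $\prod_{j\in\operatorname{Obj}(J)} G_j$ as the subset of coherent tuples $(g_j)$ satisfying $\varphi_f(g_{j_1}) = g_{j_2}$ for every arrow $f\colon j_1 \to j_2$ in $J$. Its diffeology is the subset diffeology induced from the product diffeology on $\prod_j G_j$; equivalently, it is the initial diffeology making all the projections $\varphi_j\colon G \to G_j$ smooth. Because every $\varphi_f$ is a group homomorphism, the coherence condition is preserved under componentwise multiplication and inversion, so $G$ inherits a group structure with $\varphi_j$ becoming a group homomorphism for each $j$.

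Next, I would verify smoothness of the multiplication $m\colon G\times G \to G$. Here I would invoke the universal property: a map into $G$ is smooth if and only if its composition with every $\varphi_j$ is smooth. For each object $j\in J$, let $m_j\colon G_j\times G_j \to G_j$ denote the multiplication in $G_j$, which is smooth by hypothesis. Then
\[
\varphi_j \circ m = m_j \circ (\varphi_j \times \varphi_j),
\]
and the right-hand side is a composition of smooth maps (using that products are respected by taking tangent-free diffeological structures, and that $\varphi_j$ is smooth). Hence $\varphi_j\circ m$ is smooth for every $j$, and therefore $m$ is smooth. An identical argument handles inversion: $\varphi_j \circ \operatorname{inv} = \operatorname{inv}_j \circ \varphi_j$, which is smooth since $\operatorname{inv}_j$ is.

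I do not expect any real obstacle here; the only thing to be a little careful about is the identification of the diffeology on $G$ as the one coming from the limit in $\mathbf{Diffeol}$ (so that the universal property is available), and the fact that this diffeology coincides with the subset diffeology inside $\prod_j G_j$ equipped with the product diffeology. Once these identifications are in place, everything follows formally from the functoriality of the diffeological structure under composition and product, together with the smoothness of the multiplications and inversions in each $G_j$.
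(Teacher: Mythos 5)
Your proposal is correct and follows essentially the same route as the paper: realise $G$ as the set of coherent tuples inside $\prod_j G_j$ with the initial (equivalently, subset-of-product) diffeology, note the projections are homomorphisms, and deduce smoothness of multiplication and inversion by testing against each $\varphi_j$. You simply make explicit the universal-property step that the paper compresses into ``it follows that.''
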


\begin{proof}
Recall that both the category of diffeological spaces and the category of groups are complete, and limits are constructed on the set-theoretical level: $$G=\left\{(g_j)\in\prod_{j\in J_0}G_j~\Biggr|~\forall(f\colon j_1\to j_2)\in J_1,~\varphi_f\circ\pr_{j_1}((g_j))=\pr_{j_2}((g_j))\right\}$$ where $J_0$ is the set of objects of $J$, $J_1$ the set of arrows, and $\pr_j$ is the $j^{\text{th}}$ projection onto $G_j$.  The diffeology is given by the intersection of all pullback diffeologies on $G$ via the maps $\varphi_j:=\pr_j$.  The group multiplication on $G$ is given by the coordinate-wise product: $(g_j)(h_j):=(g_jh_j)$.  It follows that the maps $\varphi_j$ are group homomorphisms, from which it follows that multiplication and inversion are smooth maps $G\times G\to G$ and $G\to G$, respectively.  Hence, $G$ is a diffeological group.
\end{proof}

\begin{remark}\labell{r:limit d-groups}
Let $G$ be the diffeological group constructed as a limit above.  Then for every object $j$ of $J$, from Proposition~\ref{p:maps between gps} we obtain a commutative diagram:

$$\xymatrix{
EG \ar[r]^{\widetilde{\Phi}_j} \ar[d] & EG_j \ar[d] \\
BG \ar[r]_{\Phi_j} & BG_j. \\
}$$

Moreover, for any arrow $f\colon j_1\to j_2$ of $J$, we obtain a commutative diagram:

$$\xymatrix{
EG \ar[rr] \ar[dr] \ar[ddd] & & EG_{j_1} \ar[dl]^{\widetilde{\Phi}_{f}} \ar[ddd] \\
 & EG_{j_2} \ar[d] & \\
 & BG_{j_2} & \\
BG \ar[rr] \ar[ur] & & BG_{j_1} \ar[ul]_{\Phi_{f}} \\
}$$
where all of the maps are the induced ones via Proposition~\ref{p:maps between gps}.
\end{remark}

We now apply the theory developed to ILB-manifolds and ILB-bundles.  Technically, our definition of an ILB-manifold below is only a special case of the definition given in \cite[Definition I.1.9]{Om}; however, it is sufficient for our needs.  For this application to make sense, we must first note that infinite-differentiability in the Fr\'echet sense is equivalent to smoothness in the diffeological sense when we equip a Fr\'echet space/manifold with the diffeology comprising Fr\'echet infinitely-differentiable parametrisations.  Moreover, Fr\'echet spaces form a full subcategory of diffeological spaces under this identification.  See, for example, \cite[Theorem 3.1.1]{losik}, \cite{KaWa}.  Note that the limits used in the following definitions can be taken in either the Fr\'echet category or the diffeological category: in our cases below they coincide.

\begin{definition}[ILB-Manifolds \& Bundles]\labell{d:ilb}
\noindent
\begin{enumerate}

\item An \textbf{ILB-manifold} $M$ is the limit in the category of Fr\'echet manifolds of a family of Banach manifolds $\{M_n\}_{n \in \NN}$ in which there is a smooth and dense inclusion $M_{n+1}\hookrightarrow M_n$ for each $n$, and such that there exists a Banach atlas on $M_0$ that restricts to an atlas on $M_n$ for each $n$.

\item An \textbf{ILB-map} $f$ between two ILB-manifolds $M$ and $N$ is a smooth map $f\colon M\to N$ along with a family of smooth maps $\{f_n\colon M_n\to N_n\}$ such that $f$ and all $f_n$ commute with all inclusions maps $M\hookrightarrow M_{n+1}\hookrightarrow M_n$ and $N\hookrightarrow N_{n+1}\hookrightarrow N_n$.

\item An \textbf{ILB-principal bundle} is an ILB-map between two ILB-manifolds $(\pi\colon P\to M,~\pi_n\colon P_n\to M_n)$ such that for each $n$, the map $\pi_n\colon P_n\to M_n$ is a principal $G_n$-bundle where $G_n$ is a Banach Lie group.

\item An ILB-map $F$ between two ILB-principal bundles $\pi\colon P\to M$ and $\pi'\colon P'\to M'$ is an \textbf{ILB-bundle map} if $F_n\colon P_n\to P'_n$ is a ($G_n$-equivariant) bundle map for each $n$.  Note that $F$ induces an ILB-map $M\to M'$.  An ILB-bundle map is an \textbf{ILB-bundle isomorphism} if it has an inverse ILB-bundle map.

\end{enumerate}
\end{definition}

It follows from the above definitions that given an ILB-principal bundle $\pi\colon P\to M$, the structure groups $\{G_n\}$ of the principal bundles $\{\pi_n\colon P_n\to M_n\}$ satisfy: $G_{n+1}$ is smoothly and densely included in $G_n$ for each $n$.  Hence $\pi\colon P\to M$ is a principal $G$-bundle where $G=\lim G_n$.  An ILB-bundle map from $\pi\colon P\to M$ to $\pi'\colon P'\to M'$ is thus necessarily $G$-equivariant.

\begin{definition}[ILB-Classifying Maps \& Homotopies]\labell{d:ilb2}
\noindent
\begin{enumerate}
\item Given an ILB-principal bundle $\pi\colon P\to M$ with structure group $G$, an \textbf{ILB-classifying map} $F\colon M\to BG$ is a classifying map for $\pi$ such that there is a classifying map $F_n\colon M_n\to BG_n$ for each $n$, the following diagram commutes for all $n$,

$$\xymatrix{
P \ar[rrr] \ar[dr]_{\widetilde{F}} \ar[ddd] & & & P_n \ar[dl]^{\widetilde{F}_n} \ar[ddd] \\
 & EG \ar[r]^{\widetilde{\Phi}_n} \ar[d] & EG_n \ar[d] & \\
 & BG \ar[r]_{\Phi_n} & BG_n & \\
M \ar[rrr] \ar[ur]^{F} & & & M_n \ar[ul]_{F_n} \\
}$$
and these maps commute with the inclusion maps $EG\hookrightarrow EG_{n+1}\hookrightarrow EG_n$ and $BG\hookrightarrow BG_{n+1}\hookrightarrow BG_n$ as in Remark~\ref{r:limit d-groups}.

\item An \textbf{ILB-homotopy} between two ILB-classifying maps $F,F'\colon M\to BG$ for ILB-principal bundles $P\to M$ and $P'\to M$ each with structure group $G$ is a family of smooth homotopies $H_n\colon M_n\times[0,1]\to BG_n$ such that $H_n(\cdot,0)=F_n$ and $H_n(\cdot,1)=F'_n$ for each $n$, and a smooth homotopy $H\colon M\times[0,1]\to BG$ such that $H(\cdot,0)=F$ and $H(\cdot,1)=F'$, and finally each $H_n$ and $H$ commute with the maps in Remark~\ref{r:limit d-groups}.

\end{enumerate}
\end{definition}

Let $\pi\colon P\to M$ be an ILB-principal bundle, and assume that each $M_n$ is a Hilbert manifold (in this case, we say that $M$ is an \textbf{ILH-manifold}).  Hilbert manifolds are smoothly paracompact \cite[Corollary 16.16]{KM}, and (smooth) partitions of unity pull back.  Hence, a smooth partition of unity subordinate to a local trivialisation of $\pi_0\colon P_0\to M_0$ induces similar partitions of unity for each $\pi_n\colon P_n\to M_n$, and on $\pi\colon P\to M$.  Thus, in this case, each $\pi_n\colon P_n\to M_n$, as well as $\pi\colon P\to M$, is D-numerable.

Given an ILB-principal bundle $\pi\colon P\to M$ with ILB group $G$ and ILH base as above, by Proposition~\ref{p:classifying maps} there is a classifying map $F\colon M\to BG$ and so $\pi\colon P\to M$ is isomorphic to $F^*EG$.  Moreover, for each $n$, there is a classifying map $F_n\colon M_n\to BG_n$ and so $P_n$ is isomorphic to $F_n^*EG_n$.  It follows from Remark~\ref{r:limit d-groups} that we obtain an ILB-classifying map for $\pi\colon P\to M$.  From Theorem~\ref{t:BG} and Remark~\ref{r:limit d-groups} we obtain the following proposition:

\begin{proposition}\labell{p:ilb}
Let $\pi\colon P\to M$ be an ILB-principal $G$-bundle in which $M=\lim M_n$ is an ILH-manifold.  Then $\pi$ has an ILB-classifying map.  Moreover, smoothly homotopic ILB-classifying maps yield isomorphic ILB-principal bundles yield ILB-homotopic ILB-classifying maps.
\end{proposition}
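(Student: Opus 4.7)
The plan is to combine the discussion immediately preceding the proposition with Theorem~\ref{t:BG} applied at each level of the ILB tower. The main task is ensuring that the various classifying maps and homotopies can be chosen \emph{coherently} with respect to all the dense inclusions in sight.

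First, I would verify D-numerability at every level. Since each $M_n$ is a Hilbert manifold, it is smoothly paracompact by \cite[Corollary 16.16]{KM}, and the ILH structure guarantees that a smooth partition of unity on $M_0$ subordinate to a locally finite local trivialisation of $\pi_0$ restricts to such a partition of unity on each $M_n$ and on $M$. Consequently every $\pi_n$ and $\pi$ itself is D-numerable; this is essentially the content of the paragraph preceding the proposition.

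Next, I would apply Proposition~\ref{p:classifying maps} in a compatible way across the tower. Starting from a single locally finite equivariant trivialisation $\{(W_j^{(0)},\psi_j^{(0)})\}$ of $\pi_0$ and a subordinate partition of unity $\{\xi_j^{(0)}\}$, pull both data back along the inclusions to obtain trivialisations and partitions on every $M_n$ and on $M$. Applying the explicit formula $\widetilde{F}(y)=((\xi_j\circ\pi(y))(\pr_2\circ\psi_j(y)))$ at each level produces maps $\widetilde{F}_n\colon P_n\to EG_n$ and $\widetilde{F}\colon P\to EG$ whose compatibility with the induced maps $EG\hookrightarrow EG_{n+1}\hookrightarrow EG_n$ of Proposition~\ref{p:maps between gps} is immediate from the formula, since the pullback of $\xi_j$ agrees across levels by construction and $\pr_2\circ\psi_j$ is equivariant with respect to the induced structure group inclusions. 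The descent to $F_n\colon M_n\to BG_n$ and $F\colon M\to BG$ inherits this compatibility, producing the commutative diagram required by Definition~\ref{d:ilb2}(1).

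For the equivalence between ILB-isomorphism classes of ILB-principal bundles and ILB-homotopy classes of ILB-classifying maps, I would invoke Theorem~\ref{t:BG} at each level $n$ and on the limit. Given an ILB-homotopy $(H, \{H_n\})$ between ILB-classifying maps, Proposition~\ref{p:homotopy} applied to each $H_n$ and to $H$ furnishes bundle isomorphisms; the concatenation-and-shrinking construction underlying Lemmas~\ref{l:open cover} and \ref{l:homotopy} depends smoothly on the input data, so if one starts from a single locally finite open cover and smooth functions $b_i$ on $M_0$ and pulls back, the resulting bundle isomorphisms assemble into an ILB-bundle isomorphism. Conversely, given an ILB-isomorphism of ILB-principal bundles, Proposition~\ref{p:uniqueness} applied at each level yields the requisite homotopies; the explicit piecewise formula $H(y,\tau)$ constructed there depends only on the equivariant maps $\widetilde{F},\widetilde{F}'$ and on a fixed bump $b$, so equivariant maps chosen coherently as above produce ILB-homotopies.

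The main obstacle is precisely this coherence across the inverse system: one must avoid making independent choices at each level. The explicit nature of the constructions in Proposition~\ref{p:classifying maps}, Proposition~\ref{p:uniqueness}, and Lemma~\ref{l:homotopy} makes the argument go through once one commits to starting from a single datum on $M_0$ and pulling back along the dense inclusions $M\hookrightarrow M_{n+1}\hookrightarrow M_n$, together with the induced inclusions on the classifying bundles supplied by Remark~\ref{r:limit d-groups}.
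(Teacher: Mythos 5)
Your proposal is correct and follows essentially the same route as the paper: D-numerability at every level from the Hilbert structure on $M_0$ and the fact that partitions of unity pull back, then Proposition~\ref{p:classifying maps} applied levelwise with the compatibility supplied by Remark~\ref{r:limit d-groups}, and finally Theorem~\ref{t:BG} for the correspondence between ILB-homotopy classes and ILB-isomorphism classes. Your explicit attention to making all choices coherently by pulling back a single trivialisation and partition of unity from $M_0$ is exactly the point the paper's terser discussion relies on.
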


%% %% %%
\subsection{Group Extensions}\labell{ss:extensions}
%% %% %%  
  
Fix a diffeological extension of diffeological groups (see \cite[Article 7.3]{iglesias}):
\begin{equation}\labell{e:ses}
1 \rightarrow G \rightarrow G' \rightarrow G'' \rightarrow 1;
\end{equation}
 By Proposition~\ref{p:maps between gps}, we obtain the following commutative diagram of diffeological spaces.

$$\xymatrix{
EG \ar[r] \ar[d] & EG' \ar[r] \ar[d] & EG'' \ar[d] \\
BG \ar[r] & BG' \ar[r] & BG'' \\
}$$

Since diffeological extensions are examples of principal bundles, \eqref{e:ses} induces a long exact sequence of diffeological homotopy groups (see \cite[Article 8.21]{iglesias}).  The following result thus is a consequence of Proposition~\ref{p:homotopy of BG}.

\begin{proposition}[Long Exact Sequence of Classifying Spaces]\labell{p:les}
Given a diffeological extension of diffeological groups as in \eqref{e:ses}, we obtain a long exact sequence
$$\dots \to \pi_{n+1}(BG'') \to \pi_n(BG) \to \pi_n(BG') \to \pi_n(BG'') \to \pi_{n-1}(BG) \to \dots~.$$
\end{proposition}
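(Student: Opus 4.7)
The plan is to derive the long exact sequence by combining two facts already in place in the paper: the diffeological extension \eqref{e:ses}, being a principal $G$-bundle $G' \to G''$, gives rise via \cite[Article 8.21]{iglesias} to a long exact sequence of diffeological homotopy groups; and Proposition~\ref{p:homotopy of BG} supplies the isomorphism $\pi_k(BH) \cong \pi_{k-1}(H)$ for any diffeological group $H$. The whole argument is then a term-by-term substitution, shifting degrees by one.

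Concretely, I would first write out the long exact sequence attached to the principal $G$-bundle $G' \to G''$:
$$\dots \to \pi_n(G) \to \pi_n(G') \to \pi_n(G'') \to \pi_{n-1}(G) \to \dots~.$$
Next, applying Proposition~\ref{p:homotopy of BG} with $k = n+1$ to each of $H \in \{G, G', G''\}$ gives $\pi_n(H) \cong \pi_{n+1}(BH)$. Substituting term by term produces
$$\dots \to \pi_{n+1}(BG) \to \pi_{n+1}(BG') \to \pi_{n+1}(BG'') \to \pi_n(BG) \to \dots~,$$
which, after the harmless reindexing $n+1 \mapsto n$, is exactly the sequence claimed. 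Because substituting isomorphic groups into an exact sequence of group homomorphisms preserves exactness, no further exactness check is needed for the bare assertion of existence.

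The main subtlety, and the only place I would expect to have to do any real work, lies in identifying the maps in the substituted sequence with those induced by the classifying-space maps $BG \to BG' \to BG''$ coming from Proposition~\ref{p:maps between gps}, so that the stated long exact sequence is genuinely the one associated to the diagram drawn just above the statement. This reduces to a naturality check for the isomorphism $\pi_k(BH) \cong \pi_{k-1}(H)$: for each smooth group homomorphism $\varphi\colon H_1 \to H_2$ arising from the extension, one must verify that the long exact sequence of the principal $H_i$-bundle $EH_i \to BH_i$ behaves naturally under the induced map $EH_1 \to EH_2$, $BH_1 \to BH_2$ of Proposition~\ref{p:maps between gps}. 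This is a direct diagram chase, using the functoriality of the Milnor construction together with the contractibility of $EH_i$ from Proposition~\ref{p:contractible}, and parallels the classical topological argument.
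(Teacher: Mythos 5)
Your proposal matches the paper's own argument: the authors likewise observe that the extension \eqref{e:ses} is a principal $G$-bundle $G'\to G''$, invoke the long exact sequence of diffeological homotopy groups from \cite[Article 8.21]{iglesias}, and then substitute via Proposition~\ref{p:homotopy of BG}. Your extra remark on checking naturality of the identification $\pi_k(BH)\cong\pi_{k-1}(H)$ against the maps from Proposition~\ref{p:maps between gps} is a reasonable point the paper leaves implicit, but it does not change the route.
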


\begin{example}[$\RR/\QQ$]\labell{x:R/Q}
Consider the rational numbers $\QQ$ as a diffeologically discrete subgroup of $\RR$.  We have the short exact sequence of diffeological groups $$1 \to \QQ \to \RR \to \RR/\QQ \to 1.$$  From the above, we get a long exact sequence of homotopy groups:
$$\dots \to \pi_{n+1}(B(\RR/\QQ)) \to \pi_n(B\QQ) \to \pi_n(B\RR) \to \pi_n(B(\RR/\QQ)) \to \pi_{n-1}(B\QQ) \to \dots~.$$

It follows from this long exact sequence and Proposition~\ref{p:homotopy of BG} that

\begin{gather*}
\pi_0(B(\RR/\QQ))=\{B(\RR/\QQ)\},\\
\pi_1(B(\RR/\QQ))\cong 1,\\
\pi_2(B(\RR/\QQ))\cong\QQ, \text{ and}\\
\pi_k(B(\RR/\QQ))\cong 1 \text{ for all $k\geq 3$}.
\end{gather*}

Of course, we could have used other easier means of computing these, but the point of this example is to illustrate what one could do with Proposition~\ref{p:les}.
\eoe
\end{example}

Using short exact sequences and group extensions is very frequent in the study of infinite-dimensional Lie groups for constructing new groups. For example:

\begin{example}[$\Diff(M)$-Pseudo-Differential Operators]\labell{x:pdo}
Consider the $\Diff(M)$-pseudo-differential operators $FIO_{\Diff}^{0,*}$ described in \cite{Ma2016}. Given a Hermitian bundle $E \rightarrow M$ over a closed manifold $M,$ the group $FIO_{\Diff}^{0,*}(M,E)$ is a group of Fourier integral operators acting on smooth sections of $E$, which can be seen as a central extension of the group of diffeomorphisms $\Diff(M)$ by the group of $0$-order invertible pseudo-differential operators acting on smooth sections of $E,$ with exact sequence:
$$ 0 \rightarrow PDO^{0,*}(M,E) \rightarrow FIO_{\Diff}^{0,*}(M,E) \rightarrow \Diff(M) \rightarrow 0.$$
We obtain a long exact sequence of homotopy groups:
\begin{gather*}
\dots \to \pi_{n+1}(B\Diff(M)) \to \pi_n\left(B(PDO^{0,*}(M,E))\right) \to \pi_n\left(B(FIO_{\Diff}^{0,*}(M,E))\right) \to \\
\pi_n(B\Diff(M)) \to \pi_{n-1}\left(B(PDO^{0,*}(M,E))\right) \to \dots~.
\end{gather*}
\eoe
\end{example}

\subsection{Irrational Torus Bundles}\labell{ss:irrational torus}

We now connect what we have done to the classical theory of Weil, in which circle (or complex line) bundles with connection over a fixed simply-connected manifold are classified by their curvatures, and conversely every integral 2-form on the manifold induces a circle bundle with connection whose curvature is that 2-form.  This is extended to non-integral 2-forms in \cite{iglesias-bdles}, in which the circle bundles are replaced with irrational torus bundles.  See also the more general theory on diffeological spaces in \cite{iglesias}.

Let $T_\alpha$ be the irrational torus, defined in Example~\ref{x:irrational torus}.  It is a regular diffeological Lie group (indeed, the quotient map $\RR\to T_\alpha$ is the exponential map).  Let $\omega$ be the connection 1-form on $ET_\alpha$ constructed in Theorem~\ref{t:universal connection}, and let $\pi\colon ET_\alpha\to BT_\alpha$ be the projection map.

\begin{lemma}\labell{l:curvature}
The 2-form $d\omega$ is basic; that is, there exists a unique $T_{e}G$-valued 2-form $\Omega$ on $BT_\alpha$ such that $\pi^*\Omega=d\omega$.
\end{lemma}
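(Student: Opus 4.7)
The plan is to invoke the descent criterion for forms along a subduction (the same criterion used in Theorem~\ref{t:universal connection}, cf.~\cite[Article 6.38]{iglesias}): since $\pi\colon ET_\alpha\to BT_\alpha$ is the quotient by the $T_\alpha$-action and hence a subduction, $d\omega$ equals $\pi^*\Omega$ for some $T_eT_\alpha$-valued 2-form $\Omega$ on $BT_\alpha$ if and only if $p_1^*d\omega=p_2^*d\omega$ for every pair of plots $p_1,p_2\colon U\to ET_\alpha$ with $\pi\circ p_1=\pi\circ p_2$. Uniqueness of $\Omega$ is automatic: every plot of $BT_\alpha$ locally lifts to a plot of $ET_\alpha$ by local triviality of the bundle, so $\pi^*$ is injective on forms.

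So fix such plots $p_1,p_2$ and a point $u_0\in U$. By the local equivariant triviality in Definition~\ref{d:fibre bundles}(5), on a neighbourhood $V$ of $u_0$ we may write $p_2|_V=p_1|_V\cdot g$ for a smooth $g\colon V\to T_\alpha$. Since exterior differentiation commutes with pullback, it suffices to show $d(p_1^*\omega)=d(p_2^*\omega)$ on $V$.

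The crux is a diffeological version of the gauge transformation formula
$$p_2^*\omega \;=\; p_1^*\omega \,+\, g^*\alpha_{MC}\qquad\text{on } V,$$
where $\alpha_{MC}$ is the Maurer-Cartan form on $T_\alpha$ from Lemma~\ref{l:maurer-cartan}. To derive it, write $p_2=m\circ(p_1,g)$ for the smooth action map $m\colon ET_\alpha\times T_\alpha\to ET_\alpha$ and apply the chain rule together with the product decomposition $T(ET_\alpha\times T_\alpha)\cong T(ET_\alpha)\times T(T_\alpha)$ from Remark~\ref{r:tangent bdle}. The $ET_\alpha$-component of $Tp_2$ contributes the summand $p_1^*\omega$ via the $\Ad$-equivariance of $\omega$ (Definition~\ref{d:conn 1-form}), which collapses to plain right-invariance because $T_\alpha$ is abelian. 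The $T_\alpha$-component contributes $g^*\alpha_{MC}$ via the fundamental-vector-field condition $\omega(\xi_{ET_\alpha})=\xi$ of Definition~\ref{d:conn 1-form} combined with the trivialisation of $T T_\alpha$ in Theorem~\ref{t:group}.

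Finally, apply $d$ to this formula. The Maurer-Cartan form on $T_\alpha=\RR/\ZZ^2$ is closed, for $\alpha_{MC}$ is the descent to $T_\alpha$ of the translation-invariant $1$-form $dx$ on $\RR$ along the quotient homomorphism $\RR\to T_\alpha$, and $d(dx)=0$. Hence $g^*(d\alpha_{MC})=0$, so $d(p_2^*\omega)=d(p_1^*\omega)$ on $V$; since $u_0\in U$ was arbitrary, this equality holds on all of $U$, and the descent criterion is satisfied. The main technical obstacle is the careful derivation of the gauge transformation formula in the diffeological setting, where one must track the chain rule through the internal tangent functor of Section~\ref{ss:groups}; once that identity is established, closedness of $\alpha_{MC}$ for the abelian group $T_\alpha$ and the descent criterion finish the argument.
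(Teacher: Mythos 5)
Your proof is correct, and while it opens with the same descent criterion and the same reduction as the paper (two plots $p_1,p_2$ with $\pi\circ p_1=\pi\circ p_2$ differ locally by a smooth gauge transformation $g\colon V\to T_\alpha$, and $\pi^*$ is injective on forms, giving uniqueness), the key computation is handled by a genuinely different route. The paper contracts $d\omega$ directly: it writes $(p_1)_*v\hook d\omega=(p_2)_*v\hook(\gamma(u)^*d\omega)+\eta\hook d\omega$, kills the first correction by invariance of $\omega$, and kills $\eta\hook d\omega$ via the Cartan--Lie formula $\eta\hook d\omega=\pounds_g(\omega)-d(\eta\hook\omega)$ together with $\eta\hook\omega=\dot g(0)$. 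You instead establish the abelian gauge-transformation identity $p_2^*\omega=p_1^*\omega+g^*\alpha_{MC}$ at the level of $1$-forms, apply $d$, and conclude from closedness of the Maurer--Cartan form, which for $T_\alpha$ you verify by pulling back along the subduction $\RR\to T_\alpha$ to get $dx$. Your version is the classical Weil-style argument and is arguably more transparent; its cost is that the full derivation of the gauge formula through the internal tangent functor (splitting $Tm$ on $T(ET_\alpha)\times T(T_\alpha)$, reducing the vertical contribution to the fundamental-vector-field condition by translation to the identity) is left as a sketch, though you correctly flag it as the crux and list the right ingredients; the paper's own proof is comparably terse at its analogous step, so this is not a gap. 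One further trade-off: the paper's argument is phrased so as to apply verbatim to any abelian diffeological group (Remark~\ref{r:curvature}), whereas your closedness argument for $\alpha_{MC}$ uses the specific presentation $T_\alpha=\RR/\ZZ^2$; extending your route to general abelian $G$ would require a separate proof of the abelian Maurer--Cartan equation $d\alpha_{MC}=0$ in the diffeological setting. Since the lemma as stated concerns only $T_\alpha$, your proof fully covers it.
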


\begin{remark}\labell{r:curvature}
This statement in fact holds for any abelian diffeological group $G$.  We prove this more general statement below.
\end{remark}

\begin{proof}
We again use the following fact (see \cite[Article 6.38]{iglesias}): $d\omega$ is the pullback of a form $\Omega$ on $BG$ if and only if for any two plots $p_1,p_2:U\to EG$ satisfying $\pi\circ p_1=\pi\circ p_2$, we have $p_1^*d\omega=p_2^*d\omega$.  Fix two such plots $p_1$ and $p_2$.  Let $V_j\subseteq EG$ be the open set $$V_j=\{(t_ig_i)\mid t_j\neq 0\}.$$  We have $p_1^{-1}(V_j)=p_2^{-1}(V_j)=:V$.  The projection $\pr_{g_j}\colon V_j\to G$ sending $(t_ig_i)$ to $g_j$ is well-defined on $V_j$, and so we have a smooth map $\gamma\colon V\to G$ sending $u$ to $\pr_{g_j}(p_2(u))^{-1}\pr_{g_j}(p_1(u))$.  Thus, $\gamma(u)\cdot p_2(u)=p_1(u)$ for all $u\in V$.

Fix $u\in V$ and $v\in T_uV$.  It follows from the chain rule (Remark~\ref{r:tangent bdle}) that $$(p_1)_*v\hook d\omega=(p_2)_*v\hook(\gamma(u)^*d\omega)+\eta\hook d\omega,$$ where $\eta=\frac{d}{dt}\Big|_{t=0}(\gamma(c(t))\cdot p_1(u))$ for some smooth curve $c\colon(-\eps,\eps)\to V$ such that $c(0)=u$ and $\dot{c}(0)=v$ ($\eps>0$).  Since $\omega$ is invariant, we have $\gamma(u)^*d\omega=d\omega$.

Since $j$, $u$, and $v$ are arbitrary, to complete the proof, we only need to show that $\eta\hook d\omega=0$.  Note that this is equal to the \emph{contraction} of $d\omega$ by the map $g_{EG}\colon(-\eps,\eps)\to\Diff(EG)$ induced by the curve $g:=\gamma\circ c$ in $G$ (see \cite[Article 6.57]{iglesias}).  By the Cartan-Lie formula (\cite[Article 6.72]{iglesias}), we have 
\begin{equation}\labell{e:curvature1}
\eta\hook d\omega=\pounds_g(\omega)-d(\eta\hook\omega).
\end{equation}
By definition of $\omega$ we know $\eta\hook\omega=\dot{g}(0)$, and so the right term of the right-hand side of \eqref{e:curvature1} vanishes.  The left term of the right-hand side of \eqref{e:curvature1} vanishes since $\omega$ is invariant (see \cite[Article 6.55]{iglesias} for a definition of the Lie derivative).

Uniqueness of $\Omega$ follows from the fact that $\pi^*$ is injective on forms.  This finishes the proof.
\end{proof}

We refer to $\Omega$ in Lemma~\ref{l:curvature} as the \textbf{curvature form} of $\omega$.  If $X$ is any Hausdorff smoothly paracompact diffeological space, and $F\colon X\to BT_\alpha$ a smooth function, we obtain the \textbf{curvature form} $F^*\Omega$ of the connection 1-form $\widetilde{F}^*\omega$ on $F^*EG$ by Proposition~\ref{p:classifying maps}.  The 2-form $F^*\Omega$ satisfies $$\pr_1^*F^*\Omega=\widetilde{F}^*d\omega$$ where $\pr_1\colon F^*EG\to X$ is the pullback bundle.

Let $X$ be a connected diffeological space.  In \cite[Article 8.40]{iglesias}, Iglesias-Zemmour proves that every principal $T_\alpha$-bundle which can be equipped with a connection 1-form induces a unique class in $H^1(\paths(X),T_\alpha)$, its \textbf{characteristic class}.  If additionally $X$ Hausdorff and smoothly paracompact, then it follows from Proposition~\ref{p:classifying maps} that \emph{all} weakly D-numerable $T_\alpha$-bundles over $X$ induce a unique class in $H^1(\paths(X),T_\alpha)$.

In \cite[Article 8.42]{iglesias}, Iglesias-Zemmour shows a converse for the simply-connected case: if $X$ is a simply-connected (and hence connected) diffeological space, and $\mu$ is a non-zero closed 2-form on $X$, then there is a principal $T_\alpha$-bundle on $X$ whose curvature is $\mu$, where $T_\alpha$ is the \textbf{torus of periods} of $\mu$.  If $X$ is additionally Hausdorff and smoothly paracompact, and the $T_\alpha$-bundle constructed is weakly D-numerable, then $\mu=F^*\Omega$, where $F$ is the classifying map of the $T_\alpha$-bundle, and $\Omega$ is the curvature form on $BT_\alpha$.

%%%%%%%%%%%%%%%%%%%%%%%%%%%%%%%%%%%%%%%%%%%%%%%%%%%%%%%%%%%%

\end{document}